\documentclass[12pt]{article}
\usepackage{graphicx} %
\usepackage{caption}
\usepackage{subcaption}
\usepackage{url}
\usepackage{amsmath}
\usepackage{amsthm}
\usepackage{amsfonts}
\usepackage{amssymb}
\usepackage{listings}
\usepackage{geometry}
\usepackage{tikz}
\usepackage{bm}

\usepackage{todonotes}
\usepackage{algorithm}
\usepackage{algpseudocode}
\usepackage{enumitem}
\usepackage{natbib}
\usepackage{xcolor}
\newtheorem{lemma}{Lemma}
\newtheorem{Theorem}{Theorem}
\newtheorem*{Remark}{Remark}

\newtheorem{definition}{Definition}

\newtheorem{Proposition}{Proposition}
\title{Robust Optimal Portfolio in a Mixture Setting with Partial Ambiguity}
\author{N. D. Shyamalkumar and Tianrun Wang\thanks{Corresponding author: tianrun-wang@uiowa.edu}}
\date{\today}

\def\xr{\mathbf{x}'\mathbf{R}}
\def\x{\mathbf{x}}
\def\CVaR#1{{\rm CVaR}_p\left(#1\right)}

\def\R{\mathbf{R}}
\def\r{\mathbf{r}}
\DeclareMathOperator*{\argmin}{argmin}
\DeclareMathOperator*{\argmax}{argmax}

\def\E#1{\mathbb{E}{\left(#1\right)}}
\def\EP#1#2{\mathbb{E}_{#1}{\left(#2\right)}}
\def\Var#1{{\rm Var}\left(#1\right)}
\def\VarP#1#2{{\rm Var}_{#1}\left(#2\right)}

\makeatletter
\DeclareRobustCommand{\cev}[1]{%
  {\mathpalette\do@cev{#1}}%
}
\newcommand{\do@cev}[2]{%
  \vbox{\offinterlineskip
    \sbox\z@{$\m@th#1 x$}%
    \ialign{##\cr
      \hidewidth\reflectbox{$\m@th#1\vec{}\mkern4mu$}\hidewidth\cr
      \noalign{\kern-\ht\z@}
      $\m@th#1#2$\cr
    }%
  }%
}
\makeatother

\usepackage{stackengine}
\stackMath
\setstackgap{L}{.4ex}

\def\calS{{\mathcal{S}}}

\def\calN{{\mathcal{N}}}
\def\calP{{\mathcal{P}}}

\def\Real{\mathbb{R}}

\def\E#1{\mathbb{E}{\left(#1\right)}}
\def\EP#1#2{\mathbb{E}_{#1}{\left(#2\right)}}
\def\Var#1{{\rm Var}\left(#1\right)}

\usepackage{marginnote}
\usepackage{etoolbox}

\newtoggle{lmargin}
\newcommand{\alternatingtodo}[2][]{%
    \iftoggle{lmargin}%
    {%
        \todo[#1]{#2}%
        \togglefalse{lmargin}%
    }{%
        {%
            \let\marginpar\marginnote%
            \reversemarginpar%
            \todo[#1]{#2}%
        }%
        \toggletrue{lmargin}%
    }%
    \ignorespaces%
}

\usepackage[normalem]{ulem}

\begin{document}
\maketitle

\begin{abstract}

Managing insurance and financial risk when data is limited is a key task in the insurance industry. In this paper, we focus on cases where the risk distribution is modeled as a mixture with some components estimable to high precision or known, and others, along with their weights, are not. Our paper addresses two robust portfolio optimization problems with partial ambiguity, where the loss function involves either variance or conditional value-at-risk (CVaR). We use a projected subgradient descent algorithm to solve the optimization problems.  The problem reduces to a convex-nonconcave minimax problem. We show that, while the general problem converges at an $O(1/\sqrt{k})$ rate, where $k$ denotes the number of iterations, exponential convergence is possible in some cases. Lastly, we provide numerical examples to show the effectiveness of our approach and the attainment of a geometric convergence rate. This work aims to provide more effective solutions for actuarial decision-making under model uncertainty.

\end{abstract}

\section{Introduction}

Since the mid-2000s, US insurance companies have been subject to risk-based capital regulations under the C3 Phase II framework (see \citet{AAA2005C3PhaseII}) that require stochastic simulation to calculate regulatory capital for products such as variable annuities. Currently, US insurance companies conduct extensive stochastic simulations every quarter to calculate statutory reserves and capital for multiple products under VM-20, VM-21, and VM-22, see \citet{NAIC2024ValuationManual}. Such use of stochastic simulation to evaluate risk measures is inherently passive, as it is not intended to drive strategic business decisions. Alternatively, let us consider an internal asset-liability management exercise to choose among a few business strategies that could significantly alter the liability portfolio in the next year. The goal is to decide on a strategy that maximizes a risk-adjusted profit measure. Note that each such strategy will also involve adaptive choice of its optimal asset portfolio. This ALM task differs from the above RBC calculation in two important ways. First, the computation of the risk-adjusted profit involves a stochastic programming exercise in determining the optimal asset portfolio. Second, while the stochastic programming exercise will involve using a finite set of samples akin to the RBC task, the quality of the optimal asset portfolio will crucially depend on how representative the samples are of the future economic environment. 

We now state a mathematical formulation of the above problem. Assuming we know the true distribution $P_*$ of a random vector $\mathbf{X}$, for a function $h$, we seek to solve 
\[
\inf_{\theta \in \Theta}  \mathbb{E}_{P_{*}} \left[ h(\theta, \mathbf{X}) \right],
\]
where $\theta$ is the decision variable, and $\Theta$ is the set of possible values $\theta$ can take. However, the true distribution is often unknown, and we only have past data, or a set of scenarios in actuarial regulatory parlance, say $\mathbf{x}_1,\ldots,\mathbf{x}_n$. A natural solution is to use the {\it empirical} distribution of the past data or the discrete uniform distribution supported on the {\it scenarios} as an approximant of the true distribution. In other words, we get the Sample-Average Approximation (SAA) problem
\[
\inf_{\theta \in \Theta}  \frac{1}{n}\sum_{i=1}^n h(\theta, \mathbf{x}_i).
\]
Unfortunately, the optimal solutions to the SAA problem often suffer from overfitting, also known as the optimizer's curse (see \citet{kuhn2019wasserstein}). Specifically, in the context of our motivating problem, such a determined optimal asset allocation may have relied on idiosyncratic features of the approximant distribution for its performance. And, in the absence of such features in the true distribution, it may perform much worse. 

A way to alleviate this issue is to modify the objective function to account for performance on distributions in a neighborhood of the approximant, in the hope that dependence on idiosyncrasies for superior performance can be weakened, resulting in {\it robust} optimal allocations. Such a solution to this problem is the min-max Distributionally Robust Optimization (DRO) problem
\[
\inf_{\theta \in \Theta} \sup_{Q \in \calN(\hat{P}_{\mathbf{X}})} \mathbb{E}_Q \left[ h(\theta, \mathbf{X}) \right],
\]
where $\hat{P}_{\mathbf{X}}$ is the discrete uniform distribution on $\mathbf{x}_1,\ldots,\mathbf{x}_n$, and $\calN(\hat{P}_{\mathbf{X}})$ denotes a neighborhood of distributions around it, called the ambiguity set. There are tradeoffs to consider in the choice of the ambiguity set. For example, it should be large enough to mitigate overfitting and small enough that one does not incur significant underperformance on the true distribution, especially as the cost of attaining robust performance for implausible distributions. 

There are many approaches to specifying the neighborhood $\calN(\hat{P}_{\mathbf{X}})$ of distributions around a best {\it guess} $\hat{P}_{\mathbf{X}}$ across many fields and applications.  One of the earliest in classical statistical robustness was the $\epsilon$-contamination model of \citet{Huber1964}, which was later carried over to Bayesian prior robustness by \citet{BergerBerliner1986}; also see \citet{Berger1994} for an overview of many different neighborhood specifications used in the context of Bayesian prior robustness. In actuarial science, there is literature on finding conservative risk bounds - robust risk measurement - when some part of the distribution is constrained or specified; for example, the marginals are constrained, and the dependence structure is arbitrary, or moments are constrained, and the distribution is an arbitrary unimodal distribution. See  \citet{embrechts2005worst}, \citet{BERNARD20209}, \citet{Li-Shao-Wang-Yang}, \citet{CORNILLY2018141}, and references therein. In the context of robust stochastic programming, see \citet{delage2010distributionally} for constraints on the first moment to lie in a given ellipsoid and the second moment to satisfy a positive semi-definite constraint. As mentioned in \citet{Qihe}, a shortcoming of such bounds is that reasonable constraints on the {\it functional} values - such as moments and measures of dependence -  used in defining these neighborhoods tend to allow implausible distributions resulting in overly conservative risk bounds.

An alternative method of specifying the neighborhood $\calN(\hat{P}_{\mathbf{X}})$ is to take it to be a ball in a chosen metric space topology on the space of distributions, or defined by the use of f-divergences (see \citet{csiszar1972class}). f-divergences, of which the most popular is the Kullback-Leibler divergence, have fundamental connections to statistical inference; see, for example, \citet{KullbackLeibler1951, Akaike1973} and \citet{Berk1966}. Also, in the setting of robust stochastic programming, an important property is that asymptotic confidence sets can be constructed as f-divergence neighborhoods of the empirical distribution, see \citet{BenTal2013}. Despite these advantages, the fact that f-divergences often blow up unless the distribution is absolutely continuous with respect to the reference distribution is unappealing, as centered on the empirical, it would exclude the true distribution. For this reason, while still maintaining mathematical tractability, we use Wasserstein distances in our development; see \citet{kuhn2019wasserstein} and \citet{blanchet2021statistical} for tutorials on DRO using Wasserstein distances. Other options include the kernel-based Maximum Mean Discrepancy (MMD) distance (see \citet{StaibJegelka2019}) and the class of Integral Probability Metrics (IPM) 
(see \citet{Muller1997} and \citet{Sriperumbudur2012}, which includes the total variation distance from the f-divergences, MMD, and the Wasserstein-1 distance; see \citet{Husain2020} for the use of IPM in the DRO setting. For $\mu$ a probability measure on $\Real^d$, the Wasserstein ball of order $\rho$ with radius $r$, centered at $\mu$ is defined as $\mathbb{B}_{W_\rho}(\mu,r)=\{\nu:W_p(\mu,\nu)<r\}$, where $W_p(\mu,\nu)^p=\inf_{\mathbf{X}\sim \mu,\mathbf{Y}\sim \nu}\E{||\mathbf{X}-\mathbf{Y}||_p^p}$ with the infimum taken over all couplings of $\mu$ and $\nu$.

We combine the $\epsilon$-contamination approach of \citet{Huber1964} with the contaminant distribution restricted to a Wasserstein ball to construct the ambiguity set in our DRO setting. Of course, in the setting of \citet{Huber1964}, the contaminant distribution is that of the outliers and is naturally uninformative. Instead, following \citet{Qihe}, in our setting the unknown distribution of $\mathbf{X}$ is assumed to have a mixture structure, with some components known and others unknown, and the component weights are also unknown. \citet{Qihe} considers an upper bound for $\mathbb{E}_{P_*} \left[ h(X) \right]$ by 
\begin{equation}
\label{problem}
    \sup_{Q \in \calN(\hat{F}_X)} \mathbb{E}_Q \left[ h(X) \right],
\end{equation} 
for some polynomial function $h$ and an ambiguity set $\calN$ as described above.

We make several contributions to the literature. We apply this mixture ambiguity framework to robust Mean-Variance and Mean-CVaR portfolio optimization problems, provide algorithms to solve them, and analyze their convergence rates.
Specifically, denote the decision variable to be the investment portfolio $\mathbf{x} \in \Real^d$, and the return to be a random vector $\R \in \Real^d$. We want to select an investment portfolio $\mathbf{x}$, so as to minimize the worst-case risk:

\begin{equation} \label{investment}
    \inf_{\mathbf{x}:\mathbf{1}'\mathbf{x}=1}\sup_{F_\R \in \calN( \hat{F}_\R)} D(-\mathbf{x}'\R),
\end{equation}
where $D$ is a disutility function and $\hat{F}_\R$ has a two-mixture structure, coming from possibly two economic regimes. For example, the return $\R$ could be modeled as a mixture distribution, with the weight unknown and the return in the high-interest-rate regime subject to ambiguity due to limited data. We employ an ambiguity set $\calN$ that respects this mixture structure, which we discuss in detail in Sections 2 and 3.

In Section 2, we study \eqref{investment} with $D(\cdot)=\Var{\cdot}+\gamma \E{\cdot}$, the Mean-Variance DRO problem, derive its equivalent min-max formulation, and solve it using a projected subgradient descent algorithm. The algorithm is guaranteed to converge at an $O(\frac{1}{\sqrt{T}})$ rate, and, under additional conditions, we show that it can attain an exponential convergence rate as well. In Section 3, we extend our framework to the Mean-CVaR problem with $D(\cdot)=\E{\cdot}+\rho\CVaR{\cdot}$. In Section 4, we present numerical examples demonstrating the attainment of exponential rates even when some assumptions are violated. Additionally, we demonstrate the efficacy of our DRO method by comparing out-of-sample disutility with that of the SAA method. In both cases, we show, both theoretically and empirically, that as the radius of our ball tends to infinity, the DRO portfolio converges to the 1/N portfolio. In Section 5, we conclude and discuss the case when a short sale is permitted.

\textbf{Notation:}  Let $\rightrightarrows$ denote uniform convergence. If $f$ is a function, $f_x:=\frac{\partial}{\partial x}f$ is the partial derivative of $f$ with respect to $x$. $\|\cdot\|$, $\|\cdot\|_1$, and $\|\cdot\|_\infty$ denote the 2-norm, 1-norm, and $\infty$-norm of a vector, respectively. For a matrix $A$, $\|A\|_2:=\max_{\x\neq\mathbf{0}} \frac{\|A\x\|}{\|\x\|}$ denotes the operator 2-norm of $A$. 

\section{Mixture for Mean-Var Disutility function}

Consider the problem of constructing an investment portfolio from $d$ assets. We represent the proportion of wealth invested in the assets by a vector of weights $\mathbf{x} \in \Delta$,  where $$\Delta=\{\x \in \Real^d:\x \ge 0, \mathbf{1}'\x =1\},$$
is the $d$-simplex. The portfolio return is given by $\mathbf{x}'\mathbf{R}$, where $\mathbf{R}$ is the vector of asset returns. Our objective is to choose $\mathbf{x}$ to minimize a disutility function, with the true distribution of $\R$ subject to a two-component mixture structure, with a component and the weight unknown. Note that, although we do not allow short positions in the development below, we discuss relaxing this restriction in Section 5. %

In this section, we begin by considering the following mean-variance disutility function, which reflects a trade-off between the expected return and the risk of the portfolio, with risk measured by its variance and the trade-off controlled by the parameter $\gamma> 0$:
\begin{equation}
\Var{\x'\R} - \gamma\E{\x'\R}.
\end{equation}
Note that $\frac{2}{\gamma}$ is the traditional coefficient of risk aversion in mean-variance utility, {\it e.g.} see Chapter 2 of \citet{ang2014asset}. The unknown true distribution of the asset returns, $P$, is assumed to belong to an ambiguity set of probability measures denoted by $\calS$. The portfolio selection problem is thus the following worst-case disutility minimization problem:

\begin{equation}
\tag{Opt 1}
\inf_{\x \in \Delta} \sup_{P \in \calS} \left( \VarP{P}{\mathbf{x}'\mathbf{R}} - \gamma\EP{P}{\mathbf{x}'\mathbf{R}} \right).
\label{eq:main_problem}
\end{equation}

An ambiguity set $\calS$, a set of probability measures, is a representation of the ambiguity in modeling.  For a market with two potential regimes, with a regime and the relative occurrence of the two regimes subject to ambiguity, we consider the following form of ambiguity sets: 
\begin{equation}
\label{ambiset}
\calS = \{ P = (1-q)P_N + qP_S \mid q \in [q_0-\varepsilon,q_0+\varepsilon], \, P_S \in \mathbb{B}_{W_2}(P_{S_0}, r(q)) \},
\end{equation}
where  $P_N,P_{S_0} \in \calP(\Real^d)$, the set of Borel probability measures on $\Real^d$, and $\varepsilon>0$. For our development below, we further assume that $r \in C^2([0,1])$, and that the variance-covariance matrices $\Sigma_{P_N}$ and  $\Sigma_{P_{S_0}}$, under the two regimes, are positive definite. In other words, we assume that the assets are constrained to form a minimal set, free of multicollinearity, under both regimes. We note that, to include the risk-free asset, it is equivalent to replacing the probability simplex $\Delta$ by the sub-probability simplex. Since compactness is retained by this substitution, our development carries through to this case.

The definition of $\calS$ is determined by judgment, guided by domain knowledge and the out-of-sample environment,  on which distributions are plausible, as well as the mathematical tractability of the resulting computational problem. In this manner, robustness against an unknown sampling environment can be achieved at an acceptable cost of conservatism, see \citet{blanchet2025distributionally}. The above defined $\calS$ can be embedded in a Wasserstein ball, since $r$ is bounded. However, such a Wasserstein ball would contain distributions far from the above structural form, especially when $P_N$ and $P_{S_0}$ are quite far apart in the $W_2$ distance. As a result, a Wasserstein ball will introduce an unacceptable level of conservatism in achieving the desired robustness.

In the above definition, %
$\mathbb{B}_{W_2}(P_{S_0}, r(q))$ is the Wasserstein-2 ball of radius $r(q)$ around a reference distribution $P_{S_0}$. $P_{S}$, where $S$ stands for "Stress scenario", accounts for atypical/stressed investment environments, such as those involving high interest rates, the environment surrounding a market crash, and other scenarios in which market fundamentals have strayed significantly from the norm. Clearly, from the very description of such a regime, a modeler has insufficient data to model $P_{S}$, especially relative to that available for modeling $P_N$, where $N$ stands for "normal". The relative occurrence of a stressed market environment is denoted by $q_0$. For example, a simple back-of-the-envelope calculation could be as follows. Over the last 40 years, there have been three significant market crashes: the dot-com crash of 2002, the subprime mortgage crisis of 2008, and the COVID-19 pandemic of 2020, suggesting the use of $q_0=(3/40)=0.075$.

 Theorem \ref{minimax} below reduces \eqref{eq:main_problem} to a minimax problem, the proof of which is in the next subsection. Towards that, we define two auxiliary functions that will be used throughout the section:
 \begin{equation}
     \psi_{a,\gamma}(y) := (y-a)^2 - \gamma y,
 \end{equation}
 and
 \begin{equation}
     V(q, \mathbf{x},a)=\left( r(q)\|\mathbf{x}\| + \sqrt{\x'\Sigma_{P_{S_0}} \x + \left(\x' \mu_{P_{S_0}} - \frac{2a+\gamma}{2}\right)^2} \right)^2 - a\gamma - \frac{\gamma^2}{4}.
 \end{equation}
 
 \newpage
 
\begin{Theorem}
\label{minimax}
    The optimization problem in \eqref{eq:main_problem} is equivalent to the following min-max problem:
\begin{equation}
\label{finalopt}
\min_{\x \in \Delta,a \in I}\;\max_{q \in [q_0-\varepsilon,\, q_0+\varepsilon]} \bigg[ (1-q)\cdot\EP{P_{N}}{\psi_{a,\gamma}(\mathbf{x}'\mathbf{R})} + q\cdot V(q, \mathbf{x},a) \bigg],
\end{equation}
where $I$ is a compact interval not dependent on $\x$.
\qed
\end{Theorem}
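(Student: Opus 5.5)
Our plan has three steps: write the variance variationally, swap the supremum over $P_S$ with the infimum over $a$, and then solve the inner worst case over the Wasserstein ball explicitly.

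\textbf{Step 1: variational form of the variance.} Write $\VarP{P}{Y}=\min_a \EP{P}{(Y-a)^2}$ with $Y=\x'\R$. Then
\[
\VarP{P}{Y}-\gamma\EP{P}{Y}=\min_{a\in\Real}\EP{P}{\psi_{a,\gamma}(Y)},
\]
and this is linear in $P$ for fixed $a$. For $P=(1-q)P_N+qP_S$ it becomes
\[
\min_a\Big[(1-q)\EP{P_N}{\psi_{a,\gamma}(Y)}+q\,\EP{P_S}{\psi_{a,\gamma}(Y)}\Big].
\]
The minimizer is $a=\EP{P}{Y}$. Since $\x\in\Delta$, $|Y|\le\|\R\|$, and for every $P_S$ in a ball of radius at most $\max r$ the second moments are uniformly bounded. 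Hence $a$ can be restricted to a compact interval $I$ depending only on $P_N$, $P_{S_0}$, $\varepsilon$ and $\sup r$, not on $\x$.

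\textbf{Step 2: interchanging $\sup_{P_S}$ and $\min_a$.} Fix $q$. The map $(a,P_S)\mapsto$ objective is convex in $a$ (indeed quadratic) and linear in $P_S$. The Wasserstein ball is convex, and $a$ ranges over the compact interval $I$. By Sion's minimax theorem (or a direct saddle argument using convexity in $a$ on $I$),
\[
\sup_{P_S}\min_{a\in I}=\min_{a\in I}\sup_{P_S}.
\]
The supremum over $q$ is then kept outside, and the outer $\inf_\x$ merged with $\min_a$, which gives the stated order $\min_{\x,a}\max_q$. Exchanging $\max_q$ and $\min_a$ is neither needed nor claimed; this is why the problem is convex-nonconcave. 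Attainment of the minimum follows from compactness of $\Delta\times I$ and continuity.

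\textbf{Step 3: the worst case over the ball.} We must show
\[
\sup_{P_S\in\mathbb{B}_{W_2}(P_{S_0},r(q))}\EP{P_S}{\psi_{a,\gamma}(\x'\R)}=V(q,\x,a).
\]
Complete the square: $\psi_{a,\gamma}(y)=\big(y-a-\tfrac{\gamma}{2}\big)^2-a\gamma-\tfrac{\gamma^2}{4}$. So we need $\sup\EP{P_S}{(\x'\R-c)^2}$ with $c=\tfrac{2a+\gamma}{2}$.

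For the upper bound, take a coupling $(\R_0,\R)$ with $\R_0\sim P_{S_0}$ and $\E{\|\R-\R_0\|^2}\le r^2$. Minkowski's inequality in $L^2$ and Cauchy--Schwarz give
\[
\|\x'\R-c\|_{L^2}\le\|\x'\R_0-c\|_{L^2}+\|\x\|\,r,
\]
and $\|\x'\R_0-c\|_{L^2}^2=\x'\Sigma_{P_{S_0}}\x+(\x'\mu_{P_{S_0}}-c)^2$.

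For attainment, use the perturbation
\[
\R=\R_0+r\,\frac{\x}{\|\x\|}\cdot\frac{\x'\R_0-c}{\|\x'\R_0-c\|_{L^2}},
\]
which has transport cost exactly $r^2$ and makes the Minkowski inequality an equality. If $\|\x'\R_0-c\|_{L^2}=0$, which cannot happen since $\Sigma_{P_{S_0}}\succ0$, a sign perturbation would do instead. The ball is defined with a strict inequality, so the supremum is approached by radii increasing to $r$ rather than attained; this is harmless because the statement concerns the value. Substituting yields exactly $q\,V(q,\x,a)$.

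The main obstacle is Step 2. The minimax interchange needs care because of the noncompact space of measures; it is handled by Sion's theorem with compactness on the $a$ side. We also need the bound on $I$ to hold uniformly in $P_S$ and $q$, which follows from the uniform second-moment bound over Wasserstein balls.
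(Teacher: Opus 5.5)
\textbf{There is a genuine gap at the end of Step 2.}

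Swapping $\sup_{P_S}$ and $\min_a$ for each fixed $q$ is legitimate, since the $W_2$ ball is convex. Combined with Step 3 it gives
$\sup_{P\in\mathcal{S}}\bigl(\mathrm{Var}_P(\mathbf{x}'\mathbf{R})-\gamma\,\mathbb{E}_P(\mathbf{x}'\mathbf{R})\bigr)=\sup_{q}\min_{a\in I}h(q,\mathbf{x},a)$.
So what you have actually proved is that (Opt 1) equals $\inf_{\mathbf{x}}\sup_q\min_{a\in I}h$.

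To ``merge'' $\inf_{\mathbf{x}}$ with $\min_a$, you must pull $\min_a$ out past $\sup_q$. That is exactly the exchange of $\max_q$ and $\min_a$ you say is not needed. What the convex--nonconcave structure lets you avoid is a saddle point in $(\mathbf{x},a;q)$. It does not let the variational variable $a$ stay inside $\sup_q$. Nothing in your argument supplies this exchange: in general only $\sup_q\min_a h\le\min_a\sup_q h$ holds, and $h$ is not concave in $q$.

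Your Step 3, by contrast, is correct. It is a more elementary route than the paper's Gao--Kleywegt duality followed by optimization over $\lambda$, and it exhibits the extremal distribution explicitly.

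\textbf{How the paper handles the exchange.} The paper gets the stated order by applying Sion's theorem once, with $Y=\mathcal{S}$ taken jointly over $(q,P_S)$ and $X=I$. It uses only that $P\mapsto\mathbb{E}_P[\psi_{a,\gamma}(\mathbf{x}'\mathbf{R})]$ is linear. For fixed $a$, the supremum over $\mathcal{S}$ then splits into $\sup_q$ of $(1-q)\mathbb{E}_{P_N}[\psi_{a,\gamma}(\mathbf{x}'\mathbf{R})]+qV$, with no exchange needed.

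That step, however, requires $\mathcal{S}$ to be convex, which the paper does not verify. Mixing couplings shows $\mathcal{S}$ is convex when $q\mapsto q\,r(q)^2$ is concave on $[q_0-\varepsilon,q_0+\varepsilon]$ (for example, $r$ constant). This does not hold in general, and then the claimed equivalence can fail.

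\textbf{A counterexample.} Take $d=1$, $P_N=P_{S_0}=\delta_0$, and write $b=a+\gamma/2$. Then $h=(b-\gamma/2)^2+2q\,r(q)|b|+q\,r(q)^2$. Set $\gamma=2.8$.
The pairs $(q,r(q))=(0.1,3)$ and $(0.5,1)$ both give $\min_a h=1.65$, at $a=-0.3$ and $a=-0.5$ respectively.
By contrast, $\min_a$ of their pointwise maximum is $1.66$, at $a=-0.4$.
Choose $r$ so that every other $q\in[0.1,0.5]$ has $(q\,r(q),q\,r(q)^2)$ componentwise below $(0.3,0.9)$ or $(0.5,0.5)$. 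Then perturb $P_N,P_{S_0}$ slightly so the covariances are positive definite. The gap persists: by your own correct fixed-$q$ argument, the true worst case stays near $1.65$, while the min--max value stays near $1.66$.

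So the gap cannot be closed as stated. You need an extra hypothesis such as convexity of $\mathcal{S}$, and then you must exchange $\min_a$ with the supremum over all of $\mathcal{S}$ rather than $q$ by $q$. Without such a hypothesis, the correct conclusion is the min--max--min form you actually derived.
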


We propose a projected subgradient descent algorithm (Algorithm \ref{algo1}) to solve \eqref{finalopt}. Define the min-max objective
\begin{equation}\label{hdef}
h(q,\x,a):=(1-q)\cdot \EP{P_{N}}{\psi_{a,\gamma}(\mathbf{x}'\mathbf{R})} + q\cdot V(q, \mathbf{x},a),
\end{equation}
and its partial maximum 
\begin{equation}\label{Jdef}
J(\x,a) := \sup_{q \in [q_0-\varepsilon,q_0+\varepsilon]} h(q,\x,a), 
\end{equation}
with $\Omega:=[q_0-\varepsilon,\, q_0+\varepsilon]\times I \times \Delta$, the compact domain of $h$. We assume that $\varepsilon<q_0 \wedge (1-q_0)$, bounding $q$ away from both $0$ and $1$.
The projection step for this algorithm projects a vector onto $\Delta$, with the projection operator conveniently expressed as the solution of the following quadratic program:
\begin{equation}
    \label{proj}
    \text{Proj}_\Delta(\mathbf{y}) = \argmin_{\mathbf{x} \in \Delta} \frac{1}{2} \|\mathbf{x} - \mathbf{y}\|^2, \quad \mathbf{y}\in\Real^d.
\end{equation}

\begin{algorithm}
\caption{Projected Subgradient Descent for Robust Mean-Var \label{algo1}}
\begin{algorithmic}[1]
\State \textbf{Initialize:} Choose $(\mathbf{x}_0,a_0) \in \Real\times \Delta$, the number of iterations $T>0$ and step sizes $\eta_k > 0$ for $k=0,1,\ldots,T$.
\For{$k=0, 1, \dots, T$}
    \State Find the worst-case probability ${q}^*_k = \argmax\limits_{q\ni |q-q_0|\leq \epsilon} h(q,\x_k,  a_k)$.
    \State Compute  the gradient $\mathbf{g}_{k, \mathbf{x}},g_{k, a}$ of $h$ with respect to $(\x,a)$ at $({q}^*_k,\x_k,a_k)$. 
    \State Update the variables:
    \begin{align*}
        \mathbf{x}_{k+} &\leftarrow \mathbf{x}_k - \eta_k \mathbf{g}_{k, \mathbf{x}}\\
            a_{k+1} &\leftarrow \tau_k - \eta_k g_{k, a}
    \end{align*}
    \State Project $\mathbf{x}_{k+1} \leftarrow \text{Proj}_{\Delta}(\mathbf{x}_{k+})$.
\EndFor
\State \textbf{Return} $(a_1,\x_1),\ldots,(a_T,\mathbf{x}_T)$.
\end{algorithmic}
\end{algorithm}

Among projected descent algorithms, the projection step is usually the main computational bottleneck; however, in our case, we have a closed-form expression for the projection operator. For $\mathbf{y}\in\Real^d$, $\mathbf{x}^*:=\text{Proj}_\Delta(\mathbf{y})$ is given by  
\begin{equation}
x_i^* = \max(y_i - \lambda, 0), \quad \hbox{where } \lambda = \frac{1}{m}\left(\sum_{j=1}^m y_{(j)} - 1\right),
\end{equation}
with $m$ being the largest integer such that 
\[
y_{(m)} - \frac{1}{m}\left(\sum_{j=1}^m y_{(j)} - 1\right) > 0.
\]
In the above,  $y_{(1)} \ge y_{(2)} \ge \dots \ge y_{(d)}$ denote the ordered components of the vector $\mathbf{y}$, and 
can be computed in $O(d \log d)$ time. This is also the complexity of the projection step.

Note that in step 4 of Algorithm \ref{algo1}, what we really seek is a subgradient $\mathbf{g}_k$ of $J$ at  $(\x_k,a_k)$. In view of Lemma \ref{propertieshJ}, by Danskin's Theorem (see Theorem \ref{danskin} in the Appendix), such a subgradient $\mathbf{g}_k$ is given by the gradient of the inner function $h$ with respect to $(\x,a)$, evaluated at $(q_k^*,\x_k,a_k)$. The following proposition, the proof of which is in the Appendix, gives an expression for this subgradient. For its statement, we define  $S_\gamma$ as,  
\[S_\gamma(\x,a) = \sqrt{\x'\Sigma_{P_{S_0}}\x + \left(\x'\mu_{P_{S_0}} - \frac{2a+\gamma}{2}\right)^2},
\]
and note that $h$ can be expressed using it as   
\[
    h(q,\x,a)=(1-q)\EP{P_{N}}{\psi_{a,\gamma}(\mathbf{x}'\mathbf{R})} 
    + q \left[\left( r(q)\|\mathbf{x}\| + S_\gamma(\x,a) \right)^2 - a\gamma - \frac{\gamma^2}{4}\right].
\]

\begin{Proposition}
\label{subgradientJ}
    A subgradient of $J$ at $(\x_k,a_k)$ is $(\mathbf{g}_{k, \mathbf{x}},g_{k, a})$, where
    \begin{multline*}
    \mathbf{g}_{k, \mathbf{x}} = (1-q_k^*)\left[2(\Sigma_{P_N} + \mu_{P_N}\mu_{P_N}')\mathbf{x}_k - 2a_k\mu_{P_N} - \gamma\mu_{P_N}\right] \\
    + q_k^* \left[ 2\left( r(q_k^*)\|\mathbf{x}_k\| + S_{\gamma}(\x_k,a_k) \right) \left(r(q_k^*) \frac{\mathbf{x}_k}{\|\mathbf{x}_k\|}+\frac{\Sigma_{P_{S_0}} \x_k+\left(\x_k' \mu_{P_{S_0}} - \frac{2a_k+\gamma}{2}\right) \cdot \mu_{P_{S_0}}}{S_{\gamma}(\x_k,a_k)}\right) \right],
\end{multline*}
and 
\begin{equation*}
    g_{k, a} = (1-q_k^*)[-2(\mathbf{x}_k'\mu_{P_N} - a_k)] + q_k^*\left[-2\frac{r(q_k^*)\|\mathbf{x}_k\| + S_{\gamma}(\x_k,a_k)}{S_{\gamma}(\x_k,a_k)}\left(\x_k' \mu_{P_{S_0}} - \frac{2a_k+\gamma}{2}\right) - \gamma \right].
\end{equation*}
\qed
\end{Proposition}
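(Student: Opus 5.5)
The plan is to use Danskin's Theorem (Theorem \ref{danskin} in the Appendix), as indicated before the statement. The hypotheses are checked as follows. The maximization variable $q$ ranges over the compact interval $[q_0-\varepsilon,q_0+\varepsilon]$. The function $h$ is jointly continuous on $\Omega$, and for each fixed $q$ the map $(\x,a)\mapsto h(q,\x,a)$ is convex and differentiable on a neighborhood of $\Delta\times I$; these are the properties recorded in Lemma \ref{propertieshJ}. Danskin's Theorem then says that $J$ is convex and that its subdifferential at $(\x_k,a_k)$ is the convex hull of the gradients $\nabla_{(\x,a)}h(q^*,\x_k,a_k)$ over maximizers $q^*$. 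In particular, the gradient at the maximizer $q_k^*$ chosen in step 3 of Algorithm \ref{algo1} is a subgradient. Convexity in $(\x,a)$ follows from three facts:
\begin{itemize}
\item $\EP{P_N}{\psi_{a,\gamma}(\x'\R)}$ is the expectation of a convex quadratic in the affine function $\x'\R-a$ plus a linear term.
\item $r(q)\|\x\|+S_\gamma(\x,a)$ is a nonnegative convex function, since $S_\gamma$ is the Euclidean norm of an affine map of $(\x,a)$ built from a square root of $\Sigma_{P_{S_0}}$.
\item The square of a nonnegative convex function is convex, and $q\ge q_0-\varepsilon>0$ and $1-q>0$ keep the weights positive.
\end{itemize}

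It then remains to compute the partial gradients. For the $P_N$ term, expanding gives
\[
\EP{P_N}{\psi_{a,\gamma}(\x'\R)}=\x'(\Sigma_{P_N}+\mu_{P_N}\mu_{P_N}')\x-2a\x'\mu_{P_N}+a^2-\gamma\x'\mu_{P_N}.
\]
Its gradient in $\x$ is $2(\Sigma_{P_N}+\mu_{P_N}\mu_{P_N}')\x-2a\mu_{P_N}-\gamma\mu_{P_N}$, and its derivative in $a$ is $-2(\x'\mu_{P_N}-a)$.

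For the stress term we apply the chain rule to $(r(q)\|\x\|+S_\gamma)^2-a\gamma-\gamma^2/4$. This gives the factor $2(r\|\x\|+S_\gamma)$ times the gradient of the inner sum. We use $\nabla_\x\|\x\|=\x/\|\x\|$ together with
\[
\nabla_\x S_\gamma=\frac{\Sigma_{P_{S_0}}\x+\left(\x'\mu_{P_{S_0}}-\frac{2a+\gamma}{2}\right)\mu_{P_{S_0}}}{S_\gamma},
\qquad
\partial_a S_\gamma=-\frac{\x'\mu_{P_{S_0}}-\frac{2a+\gamma}{2}}{S_\gamma}.
\]
The extra $-\gamma$ in $g_{k,a}$ comes from the $-a\gamma$ term. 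Multiplying by the weights $(1-q_k^*)$ and $q_k^*$ gives exactly the stated formulas.

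The main obstacle is differentiability, namely that the denominators $\|\x\|$ and $S_\gamma$ do not vanish. On $\Delta$ we have $\|\x\|\ge 1/\sqrt d>0$, so the norm is smooth on a neighborhood of $\Delta$. Also $S_\gamma(\x,a)^2\ge \x'\Sigma_{P_{S_0}}\x\ge\lambda_{\min}(\Sigma_{P_{S_0}})/d>0$, because $\Sigma_{P_{S_0}}$ is positive definite. Hence $h(q,\cdot,\cdot)$ is $C^1$ on an open set containing $\Delta\times I$, and the expressions above are genuine gradients. If the version of Danskin's Theorem in the Appendix is stated for functions on open sets, these bounds supply the required neighborhood. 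If $h$ is not known to be jointly continuous with continuous partial gradient, that follows from $r\in C^2$, after which the theorem applies directly.
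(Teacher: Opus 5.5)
Your proposal is correct and follows the paper's route: Danskin's theorem reduces the subgradient to $\nabla_{(\x,a)}h(q_k^*,\x_k,a_k)$, and the $P_N$ and stress terms are then differentiated by the chain rule exactly as you do. Your checks that $h(q,\cdot,\cdot)$ is convex and that $\|\x\|\ge 1/\sqrt d$ and $S_\gamma>0$ near $\Delta$ supply hypotheses the paper leaves implicit. In fact only the one-line inequality $J(\mathbf{z})\ge h(q_k^*,\mathbf{z})\ge h(q_k^*,\mathbf{z}_k)+\mathbf{g}_k'(\mathbf{z}-\mathbf{z}_k)$, with $\mathbf{z}=(\x,a)$, is needed, so your open-set caveat about the Appendix version of Danskin is moot.
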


Note that since $r \in C^2([0,1])$, we have $h \in C^{2}(\Omega)$. Thus $h_q, h_{qq}, \frac{\partial^2}{\partial q \partial (\x,a)} h$ and $\frac{\partial^2}{\partial ^2 (\x,a)} h$ are continuous functions on the compact domain $\Omega$. 
Let the maximum 2-norm of $(\mathbf{g}_{k, \mathbf{x}}, g_{k, a})$ over $(q_k^*, \x_k,a_k) \in \Omega$ be $K_1$. Let $K_2, K_3$ be positive constants 
\[
K_2 := \max_{(q,\x,a) \in \Omega}\left\lVert\frac{\partial^2}{\partial (\x,a)^2} h \right\rVert_2, \quad K_3:=\max_{(q,\x,a) \in \Omega} \left\|\frac{\partial^2}{\partial q \partial (\x,a)} h\right\|.
\] 
Note that for $K_1, K_2, K_3$, it suffices to use an upper bound instead of the maximum.

In Lemma \ref{propertieshJ} below, we establish convexity and smoothness properties of $h$ and 
$J$. 
Note that if $f$ is twice differentiable, then $\alpha$ strong convexity (see Definition \ref{sconv} in the Appendix) is equivalent to the eigenvalues of its Hessian being at least $\alpha$. On the other hand, $\beta$-smoothness (see Definition \ref{smooth} in the Appendix) is equivalent to these eigenvalues being at most $\beta$. The ratio $\beta/\alpha$ is known as the condition number of $f$. If $-f$ is strongly convex, then we say $f$ is strongly concave.  The proof of Lemma \ref{propertieshJ} is in the next subsection.

\newpage

\begin{lemma} We have the following statements regarding functions $h$ and $J$:
    \label{propertieshJ}
    \begin{enumerate}[label={\roman*.}]
        \item   $h$  is strongly convex in $(\x,a)$. 
        \item   $J$ is $K_1$-Lipschitz and $((1-q_0-\varepsilon)\lambda_M)$-strongly convex, where $\lambda_M$ is the smallest eigenvalue of \[M:=\begin{pmatrix}\Sigma_{P_{N}}+\mu_{P_{N}}\mu_{P_{N}}' & -\mu_{P_{N}}\\[4pt]-\mu_{P_{N}}' & 1\end{pmatrix}.
        \]
        \item    When for any $(\x,a)$, $h$ is strongly concave in $q$ on $ [q_0-\varepsilon,q_0+\varepsilon]$, then there exists $\alpha>0$ such that $h_{qq} \leq -\alpha<0 $, and 
        $J$ is $\left(\frac{K_3^2}{\alpha}+K_2\right)$-smooth. %
    \end{enumerate}
\qed
\end{lemma}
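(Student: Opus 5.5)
I plan to handle the three parts in order. The key object is the decomposition of $h$ into the $P_N$-part and the stress part $qV$.

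\textbf{Part (i).} First I will expand the $P_N$-term:
\[
\EP{P_N}{\psi_{a,\gamma}(\x'\R)}=\x'(\Sigma_{P_N}+\mu_{P_N}\mu_{P_N}')\x-2a\,\x'\mu_{P_N}+a^2-\gamma\,\x'\mu_{P_N}.
\]
This is a quadratic in $(\x,a)$ with Hessian $2M$. The matrix $M$ is positive definite: writing $\mathbf{z}=(\x,a)$, we have $\mathbf{z}'M\mathbf{z}=\x'\Sigma_{P_N}\x+(\x'\mu_{P_N}-a)^2$, which vanishes only if $\x=0$ and then $a=0$. So this term is $2\lambda_M$-strongly convex.

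Next I will show that $V(q,\cdot,\cdot)$ is convex for each fixed $q$. The map $S_\gamma(\x,a)$ is the Euclidean norm of the affine map
\[
(\x,a)\mapsto\bigl(\Sigma_{P_{S_0}}^{1/2}\x,\;\x'\mu_{P_{S_0}}-a-\gamma/2\bigr),
\]
hence convex and nonnegative. Since $r(q)\ge 0$, the sum $r(q)\|\x\|+S_\gamma$ is convex and nonnegative. Squaring a nonnegative convex function preserves convexity, and the remaining terms $-a\gamma-\gamma^2/4$ are affine.

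Combining, $h=(1-q)(\text{quadratic})+qV$ with $q\ge0$, so $h$ is $2(1-q)\lambda_M$-strongly convex. Since $q\le q_0+\varepsilon$, this is at least $2(1-q_0-\varepsilon)\lambda_M$, which gives (i). It also yields the constant in (ii) under the paper's normalization of strong convexity (with or without the factor $2$, depending on Definition \ref{sconv}).

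One caveat: $\|\x\|$ and $S_\gamma$ are smooth on $\Delta\times I$ because $\x\neq0$ on $\Delta$ and $S_\gamma\ge \sqrt{\lambda_{\min}(\Sigma_{P_{S_0}})}\,\|\x\|>0$. This is consistent with the paper's remark that $h\in C^2(\Omega)$.

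\textbf{Part (ii).} Strong convexity passes to $J$ because a pointwise supremum of functions that are each $\alpha$-strongly convex is $\alpha$-strongly convex: each $h(q,\cdot)-\tfrac{\alpha}{2}\|\cdot\|^2$ is convex, and the supremum of convex functions is convex.

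For the Lipschitz property, I will invoke Danskin's theorem (Theorem \ref{danskin}) together with Proposition \ref{subgradientJ}. Every subgradient of $J$ on the compact convex domain has norm at most $K_1$. Alternatively, directly: $|J(z)-J(z')|\le \sup_q|h(q,z)-h(q,z')|\le K_1\|z-z'\|$ by the mean value theorem, since $\|\nabla_{(\x,a)}h\|\le K_1$ on $\Omega$ and $\Delta\times I$ is convex.

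\textbf{Part (iii).} This is the main obstacle. The existence of $\alpha$ follows from continuity of $h_{qq}$ on the compact set $\Omega$, provided strong concavity is read as $h_{qq}<0$ everywhere; the maximum of $h_{qq}$ is then attained and negative.

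The maximizer $q^*(z)$ is then unique. I will show it is Lipschitz in $z=(\x,a)$ with constant $K_3/\alpha$. Write $h_q=\partial h/\partial q$. The first-order variational inequality for a maximum over an interval reads
\[
h_q(q^*(z),z)(q-q^*(z))\le0 \quad\text{for all } q \text{ in the interval}.
\]
Apply it at $z_1$ and $z_2$ and add the two inequalities. Strong concavity gives the monotonicity bound
\[
\bigl(h_q(q_1,z_1)-h_q(q_2,z_1)\bigr)(q_2-q_1)\ge\alpha|q_1-q_2|^2 .
\]
Combining these, $\alpha|q_1-q_2|^2\le|h_q(q_2,z_1)-h_q(q_2,z_2)|\,|q_1-q_2|\le K_3\|z_1-z_2\|\,|q_1-q_2|$. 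This argument handles boundary maximizers as well, avoiding the implicit function theorem.

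By Danskin, $\nabla J(z)=\nabla_z h(q^*(z),z)$. Then
\begin{align*}
\|\nabla J(z_1)-\nabla J(z_2)\| &\le \|\nabla_z h(q_1,z_1)-\nabla_z h(q_1,z_2)\|+\|\nabla_z h(q_1,z_2)-\nabla_z h(q_2,z_2)\| \\
&\le K_2\|z_1-z_2\|+K_3|q_1-q_2| \\
&\le \left(K_2+\frac{K_3^2}{\alpha}\right)\|z_1-z_2\|,
\end{align*}
which is the claimed smoothness.
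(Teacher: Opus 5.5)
Your proposal is correct and follows the paper's proof essentially step by step. For (i) you use the Hessian of the $P_N$-term together with convexity of $V$ via the norm structure of $S_\gamma$; for (ii) you use that a supremum preserves strong convexity. For (iii) you combine first-order optimality on the interval with $h_{qq}\le-\alpha$ to make $q^*$ $(K_3/\alpha)$-Lipschitz, then finish with the triangle inequality; your variational inequality is the same as the paper's KKT step. One small point in your favor: the Hessian of $\mathbb{E}_{P_N}[\psi_{a,\gamma}(\mathbf{x}'\mathbf{R})]$ is indeed $2M$, whereas the paper's display drops the factor $2$. The stated modulus $(1-q_0-\varepsilon)\lambda_M$ therefore holds a fortiori, and your hedging about the normalization is unnecessary.
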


Three fundamental complexity results for the projected subgradient descent algorithm are  Theorem \ref{slowConverge}, \ref{mediumConverge}, and Theorem \ref{fastConverge} of the Appendix, which lay out conditions under which 
the algorithm converges at a $O(1/\sqrt{k})$, $O(1/k)$ and an exponential rate, respectively.  
From Lemma \ref{propertieshJ} ii., $J$ is strongly convex, and hence there exists a unique point, denoted below by $(\x^*,a^*)$, at which it attains its minimum. Also, since a subgradient of $J$, as in Proposition \ref{subgradientJ}, is bounded by $K_1$, the conditions for Theorem \ref{mediumConverge} are satisfied by $J$. Hence, our algorithm has a convergence rate of at least  $O(1/k)$. Moreover, if conditions of Lemma \ref{propertieshJ} iii. hold, $\nabla J$ will be Lipschitz, and then from Theorem \ref{fastConverge} we can conclude that the convergence rate will be exponential. We formalize the discussion of convergence rate in the following two theorems.

\begin{Theorem}
For the ambiguity sets, as defined in \eqref{ambiset}, Algorithm \ref{algo1} with time-varying step size
\[
\eta_k = \frac{2}{\lambda_M(1-q_0-\varepsilon) (k+1)}
\]
satisfies
\[
J\left( \sum_{k=1}^T \frac{2k}{T(T+1)} (\x_k,a_k) \right) - J((\x^*,a^*))
\;\le\;
\frac{2K_1^2}{\lambda_M(1-q_0-\varepsilon) (T+1)} .
\]
\end{Theorem}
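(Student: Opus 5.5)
This bound is the standard $O(1/T)$ guarantee for projected subgradient descent on a strongly convex Lipschitz function (the paper's Theorem \ref{mediumConverge} in the Appendix), specialized to $J$. The plan is to verify its hypotheses for $J$ on the compact convex set $\Delta\times I$ and then read off the constants. Write $\lambda:=\lambda_M(1-q_0-\varepsilon)$ and $\mathbf{z}_k:=(\x_k,a_k)$.

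First, by Lemma \ref{propertieshJ} ii., $J$ is $\lambda$-strongly convex and $K_1$-Lipschitz on $\Delta\times I$. By Danskin's theorem (Theorem \ref{danskin}) together with Proposition \ref{subgradientJ}, the vector $\mathbf{g}_k=(\mathbf{g}_{k,\x},g_{k,a})$ computed in step 4 is a subgradient of $J$ at $\mathbf{z}_k$, and $\|\mathbf{g}_k\|\le K_1$ by the definition of $K_1$. The update is a projected step onto $\Delta\times I$: the $\x$-part is projected onto $\Delta$, and the $a$-update should be read as projected onto $I$. I would note this interpretation explicitly, since Algorithm \ref{algo1} only writes the projection for $\x$. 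The unique minimizer $\mathbf{z}^*=(\x^*,a^*)$ exists by strong convexity.

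If Theorem \ref{mediumConverge} is stated in exactly this form, the proof amounts to plugging in $\alpha=\lambda$ and $L=K_1$. For self-containedness, the derivation runs as follows.

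\begin{enumerate}
\item Nonexpansiveness of the projection gives
\[
\|\mathbf{z}_{k+1}-\mathbf{z}^*\|^2\le \|\mathbf{z}_k-\mathbf{z}^*\|^2-2\eta_k\mathbf{g}_k'(\mathbf{z}_k-\mathbf{z}^*)+\eta_k^2K_1^2 .
\]
\item Strong convexity gives $\mathbf{g}_k'(\mathbf{z}_k-\mathbf{z}^*)\ge J(\mathbf{z}_k)-J(\mathbf{z}^*)+\frac{\lambda}{2}\|\mathbf{z}_k-\mathbf{z}^*\|^2$.
\item Combining the two,
\[
J(\mathbf{z}_k)-J(\mathbf{z}^*)\le \Big(\tfrac{1}{2\eta_k}-\tfrac{\lambda}{2}\Big)\|\mathbf{z}_k-\mathbf{z}^*\|^2-\tfrac{1}{2\eta_k}\|\mathbf{z}_{k+1}-\mathbf{z}^*\|^2+\tfrac{\eta_k}{2}K_1^2 .
\]
\item Multiply by $k$ and sum over $k=1,\dots,T$. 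This weighting is the main technical step, and it is where the specific step size $\eta_k=\frac{2}{\lambda(k+1)}$ matters. With this choice, $k(\frac{1}{2\eta_k}-\frac{\lambda}{2})=\frac{\lambda k(k-1)}{4}$ and $\frac{k}{2\eta_k}=\frac{\lambda k(k+1)}{4}$, so the distance terms telescope to a nonpositive remainder.
\item What remains is $\sum_k k\frac{\eta_k}{2}K_1^2=\sum_k\frac{k}{\lambda(k+1)}K_1^2\le \frac{T K_1^2}{\lambda}$. Dividing by $\sum_k k=T(T+1)/2$ and applying Jensen's inequality (convexity of $J$) to the weighted average $\sum_k\frac{2k}{T(T+1)}\mathbf{z}_k$ yields $\frac{2K_1^2}{\lambda(T+1)}$, as claimed.
\end{enumerate}

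The main obstacle is not the telescoping but the hypotheses. One must ensure that the iterates, including $a_k$, remain in the compact set $I$, so that the bound $K_1$ on subgradients applies at every step. The way to handle this is to project $a$ onto $I$, which preserves the nonexpansiveness used in step 1. A second point is the indexing: the algorithm's step index starts at $0$ while the sum starts at $k=1$. I would align these by indexing so that $\mathbf{z}_{k+1}$ is produced with $\eta_k$ for $k\ge1$; this is harmless, since the telescoping only uses steps $k=1,\dots,T$.
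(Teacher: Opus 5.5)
Your proposal is correct and follows the paper's proof. The paper's proof simply combines Lemma~\ref{propertieshJ}~ii.\ (strong convexity with constant $\lambda_M(1-q_0-\varepsilon)$, together with the $K_1$ bound on subgradients) with Theorem~\ref{mediumConverge}; your telescoping derivation just reproduces the proof of that cited theorem. Your remark that the $a$-update must also be projected onto $I$, so that the iterates stay in $\Omega$ where the bound $K_1$ applies, is a genuine point that Algorithm~\ref{algo1} leaves implicit, and your fix is the right one.
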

\begin{proof}
    This is a combination of Lemma \ref{propertieshJ} ii. and Theorem \ref{mediumConverge}.
\end{proof}

In Theorem \ref{fastConvergeVar} below, we show that Algorithm \ref{algo1} can attain an exponential rate of convergence. Our formulation of this theorem is based on choosing $r$, for convenience,  to peak at $q_0$. Such a function can be chosen from the versatile beta distribution family
\[
r(q) \propto q^{\alpha-1}(1-q)^{\beta-1},
\]
with $r$ attaining its maximum at $\frac{\alpha-1}{\alpha+\beta-2}$ when $\alpha>1,\beta>1$, and unimodality implying locally strictly concave near its mode. The proof of Theorem \ref{fastConvergeVar} is in the next subsection.

\begin{Theorem}
\label{fastConvergeVar}
 If $r'(q_0)=0$, $r''(q_0)<0$, then there exists an $\varepsilon_0>0$, such that for those ambiguity sets defined in \eqref{ambiset} with $\varepsilon \leq \varepsilon_0$, Algorithm \ref{algo1} with a constant step size $$\eta_k=\frac{K_4}{K_3^2+K_2K_4}$$ satisfies
 \[
J(\x_{T},a_T) - J(\x^*,a^*)  \leq \frac{K_3^2+K_2K_4}{2K_4}\, \exp\!\left(-\frac{K_4\,\lambda_M(1-q_0-\varepsilon_0)}{K_3^2+K_2K_4}\,T\right) \|(\x_{0},a_0) - (\x^*,a^*)\|^2,
\]
where $K_4:=-q_0 r(q_0) r''(q_0)/d>0$.
\qed
\end{Theorem}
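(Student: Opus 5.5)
The plan is to reduce Theorem \ref{fastConvergeVar} to Lemma \ref{propertieshJ} iii.\ and then apply Theorem \ref{fastConverge} of the Appendix. Lemma \ref{propertieshJ} ii.\ already gives that $J$ is $\lambda_M(1-q_0-\varepsilon)$-strongly convex. For $\varepsilon\le\varepsilon_0$ this is at least $\lambda_M(1-q_0-\varepsilon_0)$. So the only thing left is to show that, for $\varepsilon$ small, $h$ is strongly concave in $q$ on $[q_0-\varepsilon,q_0+\varepsilon]$ uniformly in $(\x,a)\in\Delta\times I$, with modulus $\alpha=K_4$. Then Lemma \ref{propertieshJ} iii.\ shows that $J$ is $\beta$-smooth with
\[
\beta=\frac{K_3^2}{K_4}+K_2=\frac{K_3^2+K_2K_4}{K_4}.
\]
Theorem \ref{fastConverge} with step size $1/\beta$ then gives
\[
J(\x_T,a_T)-J(\x^*,a^*)\le \frac{\beta}{2}\exp\!\left(-\frac{\alpha_J}{\beta}T\right)\|(\x_0,a_0)-(\x^*,a^*)\|^2,
\qquad \alpha_J=\lambda_M(1-q_0-\varepsilon_0),
\]
which is exactly the displayed bound.

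The main work is the concavity estimate. I will differentiate $h$ twice in $q$. Write $E(\x,a)=\EP{P_N}{\psi_{a,\gamma}(\x'\R)}$ and $c(\x,a)=S_\gamma(\x,a)$. Then
\[
h(q,\x,a)=(1-q)E+q\bigl[(r(q)\|\x\|+c)^2-a\gamma-\gamma^2/4\bigr].
\]
The first term and the constant $-a\gamma-\gamma^2/4$ are affine in $q$, so they contribute nothing to $h_{qq}$. Let $g(q)=q\,(r(q)\|\x\|+c)^2$. Then
\[
g''(q)=2\cdot 2r'\|\x\|\,(r\|\x\|+c)+q\bigl[2r'^2\|\x\|^2+2r''\|\x\|\,(r\|\x\|+c)\bigr].
\]
At $q=q_0$ we have $r'(q_0)=0$, so
\[
h_{qq}(q_0,\x,a)=2q_0 r''(q_0)\|\x\|\,(r(q_0)\|\x\|+c)\le 2q_0r''(q_0)r(q_0)\|\x\|^2,
\]
using $c\ge0$ and $r''(q_0)<0$. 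On $\Delta$ we have $\|\x\|^2\ge 1/d$, so $h_{qq}(q_0,\x,a)\le 2q_0r(q_0)r''(q_0)/d=-2K_4$. This needs $r(q_0)>0$, which I take as implicit since $K_4>0$ is asserted.

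Next I need $h_{qq}\le -K_4$ on a neighborhood of $q_0$, uniformly over the compact set $\Delta\times I$. Since $r\in C^2$, $h_{qq}$ is continuous on the compact set $[0,1]\times\Delta\times I$, hence uniformly continuous there. So there is $\varepsilon_0>0$, also taken below $q_0\wedge(1-q_0)$, such that $|q-q_0|\le\varepsilon_0$ implies
\[
|h_{qq}(q,\x,a)-h_{qq}(q_0,\x,a)|\le K_4
\]
for all $(\x,a)$. This gives $h_{qq}\le -K_4$, and hence strong concavity with $\alpha=K_4$, for every $\varepsilon\le\varepsilon_0$. One subtlety is that $I$ must not depend on $\varepsilon$ for this uniformity to hold. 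Theorem \ref{minimax} says $I$ does not depend on $\x$; I will check that $I$ can be chosen for the largest admissible $\varepsilon_0$, so that $K_2$, $K_3$ and $I$ stay fixed as $\varepsilon$ shrinks. If needed, I will fix $I$ and $\Omega$ for $\varepsilon_0$ first and then take all constants as maxima over that larger set.

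I expect this uniformity bookkeeping to be the main obstacle: making sure $\varepsilon_0$, $I$, $K_2$ and $K_3$ are chosen consistently so that no constant depends circularly on $\varepsilon$. The derivative computation and the final application of Theorem \ref{fastConverge} are routine once smoothness and strong convexity are in hand.
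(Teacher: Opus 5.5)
Your proposal is correct and follows the paper's proof. You show $h_{qq}\le -K_4$ on a neighborhood of $q_0$, uniformly in $(\x,a)$, and then combine Lemma \ref{propertieshJ} ii.\ and iii.\ (with $\alpha=K_4$) with Theorem \ref{fastConverge}. The only difference is cosmetic. The paper dominates $h_{qq}$ near $q_0$ by a $q$-only majorant $\bar{h}_{qq}(q)$, built from $\max r$, $\max S_\gamma$ and $\|\x\|^2\ge 1/d$, and uses its continuity. You instead evaluate $h_{qq}(q_0,\cdot)\le -2K_4$ exactly and use joint uniform continuity of $h_{qq}$ on the compact domain, which is equally valid. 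Your point about fixing $I$ independently of $\varepsilon$ is something the paper leaves implicit.
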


Finally, Theorem \ref{oneOverD}, the proof of which is in the next subsection, shows that as the ambiguity becomes large, $\x^*$ approaches $(\frac{1}{d},\ldots,\frac{1}{d})$, the equal-weighted portfolio, also known as the $1/N$ portfolio, see \citet{guo2019does}.

\begin{Theorem}
\label{oneOverD}
Let $(\x_C^*, a_C^*)$ be a minimizer of $J$ with the ambiguity radius in \eqref{ambiset} defined by $r(q) = C \cdot r_{\text{base}}(q)$, where $C>0$ is a scaling parameter and $r_{\text{base}}$ is a fixed, non-negative continuous function on the interval $[0,1]$,  that is not identically zero on $[q_0-\varepsilon, q_0+\varepsilon]$. Then,  as $C \to \infty$, $\x_C^*$ converges to the equal weighted portfolio $\frac{1}{d} \cdot \mathbf{1}$.
\end{Theorem}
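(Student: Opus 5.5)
Because the $\|\x\|$ term dominates, the plan is to rescale $J$ by $C^2$ and pass to the limit $C\to\infty$. Write $h_C$ and $J_C$ for the objects in \eqref{hdef}--\eqref{Jdef} with $r=C\,r_{\text{base}}$. Expanding the square in $V$ gives
\[
h_C(q,\x,a)=C^2 q\, r_{\text{base}}(q)^2\|\x\|^2 + 2C\, q\, r_{\text{base}}(q)\|\x\|S_\gamma(\x,a) + g(q,\x,a),
\]
where $g$ does not depend on $C$ and is continuous on the compact set $\Omega$. Here we use that $I$ is a compact interval not depending on $\x$, by Theorem \ref{minimax}. I would check that $I$ can be chosen independently of $C$ as well, or else restrict attention to a fixed compact $I$ containing all the $a_C^*$. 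I expect to verify this from the proof of Theorem \ref{minimax}: there $a$ plays the role of a mean and should lie between bounds determined by the supports/means of $P_N$ and the shifted $P_S$, which are uniform over the simplex. This is the main technical point to confirm. If $I$ in fact depends on $C$, I would instead note that for fixed $\x$ the minimizer in $a$ is bounded independently of $C$, because the $C^2$ term does not involve $a$.

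Dividing by $C^2$, we get uniformly on $\Omega$
\[
\frac{h_C(q,\x,a)}{C^2}\rightrightarrows q\, r_{\text{base}}(q)^2\|\x\|^2 .
\]
This holds because $S_\gamma$ and $g$ are bounded on $\Omega$ and $\|\x\|\le 1$. Since a supremum over $q$ preserves uniform convergence, we obtain
\[
\frac{J_C(\x,a)}{C^2}\rightrightarrows m\,\|\x\|^2, \qquad m:=\max_{q\in[q_0-\varepsilon,q_0+\varepsilon]} q\, r_{\text{base}}(q)^2 .
\]
We have $m>0$ because $r_{\text{base}}$ is nonnegative, continuous and not identically zero on the interval, and $q\ge q_0-\varepsilon>0$.

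Now take a minimizer $(\x_C^*,a_C^*)$ of $J_C$, which exists and is unique by Lemma \ref{propertieshJ} ii. Let $\delta_C$ denote the uniform error $\sup_\Omega |J_C/C^2 - m\|\x\|^2|$, so $\delta_C\to0$. Comparing with the feasible point $(\frac1d\mathbf{1},a)$ for any $a\in I$ gives
\[
m\|\x_C^*\|^2 \le \frac{J_C(\x_C^*,a_C^*)}{C^2}+\delta_C \le \frac{J_C(\tfrac1d\mathbf{1},a)}{C^2}+\delta_C \le \frac{m}{d}+2\delta_C ,
\]
hence $\|\x_C^*\|^2\le 1/d+2\delta_C/m$. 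On $\Delta$ we have the identity $\|\x\|^2-1/d=\|\x-\frac1d\mathbf{1}\|^2$, since $\mathbf{1}'\x=1$. Therefore
\[
\left\|\x_C^*-\tfrac1d\mathbf{1}\right\|^2\le \frac{2\delta_C}{m}\to0 .
\]
This proves the theorem. Alternatively, one can phrase the conclusion as a compactness/argmin-continuity argument: every limit point of $\x_C^*$ minimizes $\|\x\|^2$ over $\Delta$, and the unique such minimizer is $\frac1d\mathbf{1}$.
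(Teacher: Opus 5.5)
Your approach is essentially the paper's. You rescale by $C^2$, get $h_C/C^2\rightrightarrows q\,r_{\text{base}}(q)^2\|\x\|^2$, and use $\|\x\|^2-\frac1d=\|\x-\frac1d\mathbf{1}\|^2$ on $\Delta$ to trap $\x_C^*$ in a ball of radius $O(C^{-1/2})$ around $\frac1d\mathbf{1}$. Your execution is tidier than the paper's. You push the uniform convergence through the maximum over $q$ and compare $J_C$ directly with a feasible point, whereas the paper detours through the level set $I_q$ and compares $h$ at fixed $q$.

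The gap is the point you flagged. Your first guess about it is wrong, and your fallback reaches the right conclusion for the wrong reason.

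\textbf{Why $I$ depends on $C$.} In the proof of Lemma \ref{alternateFormOfOpt1}, $I$ is built from $M=\max_q r(q)=C\max_q r_{\text{base}}(q)$, so its half-length grows linearly in $C$. The ball really can shift the mean of $P_S$ by order $C$. Over $\Delta\times I_C$ your uniform convergence fails: for $|a|\sim C$ the terms $2q\,r_{\text{base}}(q)\|\x\|S_\gamma(\x,a)/C$ and $g/C^2$ are of order one, so $\delta_C$ does not tend to zero. The paper's own proof silently treats $I$ as fixed, so it has the same hole.

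\textbf{Why your fallback reason fails.} The minimizing $a$ does stay bounded, but not ``because the $C^2$ term does not involve $a$''. The order-$C$ term $2qC\,r_{\text{base}}(q)\|\x\|S_\gamma(\x,a)$ depends on $a$ and grows like $C|a|$. What rescues the argument is its monotonicity. One computes
\[
\partial_a h_C(q,\x,a)=2(1-q)(a-\x'\mu_{P_N})+2q(a-\x'\mu_{P_{S_0}})+2qC\,r_{\text{base}}(q)\|\x\|\,\frac{a+\gamma/2-\x'\mu_{P_{S_0}}}{S_\gamma(\x,a)}.
\]
Since $|\x'\mu|\le\|\mu\|_\infty$ on $\Delta$, this is strictly positive for $a>A_+:=\|\mu_{P_N}\|_\infty\vee\|\mu_{P_{S_0}}\|_\infty$. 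It is strictly negative for $a<A_-:=-A_+-\gamma/2$. Both hold uniformly in $q$, in $\x\in\Delta$ and in $C$.

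\textbf{The repair.} It follows that $J_C(\x,\cdot)=\max_q h_C(q,\x,\cdot)$ is strictly increasing to the right of $A_+$ and strictly decreasing to the left of $A_-$. Hence $a_C^*\in[A_-,A_+]$ as soon as $I_C\supseteq[A_-,A_+]$, which holds for all large $C$. Now run your uniform-convergence and comparison argument on the fixed compact set $\Delta\times[A_-,A_+]$, choosing the comparison point $a$ in $[A_-,A_+]$. This gives $\delta_C=O(1/C)$ and $\|\x_C^*-\frac1d\mathbf{1}\|^2\le 2\delta_C/m$, which completes the proof.
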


\subsection{Proofs of Results}
\subsubsection{Proof of Theorem \ref{minimax}}
To prove Theorem \ref{minimax}, we need the following lemmas.
We begin by deriving an alternate form for the inner maximization in \eqref{eq:main_problem}.

\begin{lemma}
\label{alternateFormOfOpt1}
For portfolio weights $\x$, the inner problem of \eqref{eq:main_problem} satisfies, 
\begin{equation}
    \sup_{P \in \calS} \left( \VarP{P}{\mathbf{x}'\mathbf{R}} - \gamma\EP{P}{\mathbf{x}'\mathbf{R}} \right) =
\min_{a \in I} \sup_{|q-q_0| \le \varepsilon} \left\{ (1-q)\EP{P_{N}}{\psi_{a,\gamma}(\mathbf{x}'\mathbf{R})} + q V(q, \x, a) \right\},
\end{equation}
where
\[
\psi_{a,\gamma}(y) := (y-a)^2 - \gamma y, \quad V(q, \x, a) := \sup_{G \in \mathbb{B}_{W_2}(P_{S_0}, r(q))} \EP{P_{S}}{\psi_{a,\gamma}(\mathbf{x}'\mathbf{R})},
\]
and $I$ is a compact interval not dependent on $\x$.
\end{lemma}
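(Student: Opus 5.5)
The plan rests on the variational representation of the variance, $\VarP{P}{Y} = \min_{a\in\Real} \EP{P}{(Y-a)^2}$, with the minimum attained at $a=\EP{P}{Y}$. With $Y=\x'\R$, this gives for every $P$
\[
\VarP{P}{\x'\R}-\gamma\EP{P}{\x'\R}=\min_{a}\EP{P}{\psi_{a,\gamma}(\x'\R)}.
\]
The inner problem is therefore $\sup_{P\in\calS}\min_a \EP{P}{\psi_{a,\gamma}(\x'\R)}$. For $P=(1-q)P_N+qP_S$, the expectation splits linearly as $(1-q)\EP{P_N}{\psi_{a,\gamma}} + q\,\EP{P_S}{\psi_{a,\gamma}}$. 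The task is then to interchange $\sup_P$ and $\min_a$, and to confine $a$ to a compact interval $I$ that does not depend on $\x$.

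For the compact interval, I would use that $\x\in\Delta$ and that every admissible $P_S$ lies in $\mathbb{B}_{W_2}(P_{S_0},r(q))$ with $r$ bounded on $[0,1]$. Since $|\EP{P_S}{\x'\R}-\EP{P_{S_0}}{\x'\R}| \le \|\x\| W_2(P_S,P_{S_0}) \le \|\x\|\,\|r\|_\infty$ and $\|\x\|\le 1$, the mean $\EP{P}{\x'\R}$ is bounded uniformly over $P\in\calS$ and $\x\in\Delta$. Concretely, it lies in $\left[-m-\|r\|_\infty,\; m+\|r\|_\infty\right]$ with $m:=\max(\|\mu_{P_N}\|_\infty,\|\mu_{P_{S_0}}\|_\infty)$. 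I take $I$ to be this interval. For every $P\in\calS$ the optimal $a$ lies in $I$, so $\min_{a\in\Real}$ can be replaced by $\min_{a\in I}$ without changing any value.

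For the interchange, I view $P\mapsto \EP{P}{\psi_{a,\gamma}(\x'\R)}$ as linear in $P$ and $a\mapsto \EP{P}{\psi_{a,\gamma}}$ as convex (quadratic) in $a$ on the compact convex set $I$. I expect the interchange to be the main obstacle, because $\calS$ is not obviously convex: the radius $r(q)$ depends on $q$. I would handle it in two stages. First, fix $q$. The set $\{(1-q)P_N+qP_S : P_S\in\mathbb{B}_{W_2}(P_{S_0},r(q))\}$ is convex, since $W_2^2$ is jointly convex and hence Wasserstein balls are convex. Sion's minimax theorem, applied with compactness on the $a$ side, then gives
\[
\sup_{P_S}\min_{a\in I}=\min_{a\in I}\sup_{P_S}\Big[(1-q)\EP{P_N}{\psi_{a,\gamma}}+q\,\EP{P_S}{\psi_{a,\gamma}}\Big]=\min_{a\in I}\Big[(1-q)\EP{P_N}{\psi_{a,\gamma}}+qV(q,\x,a)\Big],
\]
which by definition of $V$ produces the function $V(q,\x,a)$ in the statement. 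Second, take the supremum over $q$. This yields $\sup_q\min_{a\in I}$, whereas the lemma asserts $\min_{a\in I}\sup_q$. Since $\sup_q\min_a\le\min_a\sup_q$ always holds, only the reverse inequality is needed. For that I would exploit the explicit structure: at any $q$ the minimizing $a$ equals the mean $\EP{P}{\x'\R}$ of the worst-case $P$.

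If a direct Sion argument fails because convexity in $q$ is lacking, I would fall back on a different route. I would argue that the worst case over $P\in\calS$ can be realized as a supremum over the convex hull of $\calS$ without changing the value, because the objective $\min_a \EP{P}{\psi_{a,\gamma}}$ is concave in $P$ (a minimum of linear functionals). Then I would check that every mixture in the hull already has the form $(1-q)P_N+qP_S$ with an admissible ball radius. A cruder alternative is to appeal to a minimax theorem over probability measures on $q$ and show that the inner quadratic-in-$a$ structure forces an attained saddle.

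I expect this reverse inequality, that is, the legitimacy of exchanging $\min_a$ with $\sup_q$, to be the delicate step. Everything else is routine: the variance identity, the linearity of mixtures, the Sion step for fixed $q$, and the uniform bound on $a$.
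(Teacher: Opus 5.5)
\textbf{The gap is the swap of $\min_a$ and $\sup_q$, and none of your remedies closes it.} Your preliminary steps are fine. The variational form of the variance is correct. Your uniform bound on $\EP{P}{\x'\R}$ is cleaner than the paper's second-moment bound. The fixed-$q$ swap is legitimate, because each Wasserstein ball is convex.

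Write $g(q,a):=(1-q)\EP{P_N}{\psi_{a,\gamma}(\x'\R)}+qV(q,\x,a)$. Your first stage shows that the left-hand side equals $\sup_q\min_{a\in I}g(q,a)$. You still need $\min_{a\in I}\sup_q g\le\sup_q\min_{a\in I} g$, yet $g$ has no concavity in $q$ for a general $r$.
\begin{itemize}
\item The structural fact you propose to exploit is what breaks the swap: the minimizing $a$ is the worst-case mixture mean, and it moves with $q$.
\item Your fallback runs the wrong way. Since $P\mapsto\min_a\EP{P}{\psi_{a,\gamma}(\x'\R)}$ is concave, replacing $\calS$ by $\mathrm{conv}(\calS)$ can only raise its supremum.
\item Because $\EP{P}{\psi_{a,\gamma}(\x'\R)}$ is linear in $P$, one has $\min_a\sup_{\calS}=\min_a\sup_{\mathrm{conv}(\calS)}=\sup_{\mathrm{conv}(\calS)}\min_a$. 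So the right-hand side of the lemma is the worst case over the convex hull. Randomizing over $q$ only reproduces this hull value.
\item Checking that the hull lies in $\calS$ is exactly convexity of $\calS$, which fails in general. Mixing $(1-q_i)P_N+q_iP_S^i$ with weights $\lambda,1-\lambda$ gives stress weight $\bar q=\lambda q_1+(1-\lambda)q_2$. Convexity of $W_2^2$ then yields only $W_2^2(\bar P_S,P_{S_0})\le[\lambda q_1r(q_1)^2+(1-\lambda)q_2r(q_2)^2]/\bar q$. This is at most $r(\bar q)^2$ when $q\mapsto q\,r(q)^2$ is concave, but not in general.
\end{itemize}

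\textbf{The missing inequality is false without an extra hypothesis.} Take $d=1$ and $\gamma>0$ small; the numbers below are for $\gamma=0$ and persist by continuity. Let $P_N=N(0,1)$, $P_{S_0}=N(10,1)$ and $[q_0-\varepsilon,q_0+\varepsilon]=[0.05,0.15]$. Define $r$ on this interval implicitly so that the worst-case variance $W(q):=\min_a g(q,a)$ is constant, equal to some $v$ slightly above $1+0.15\cdot0.85\cdot100=13.75$. A smooth such $r$ exists by the implicit function theorem, since $\partial W/\partial r=2q(r+S_\gamma)>0$. This forces $r(0.15)\approx0$ and $r(0.05)\approx6.4$. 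The left-hand side is then $v$. The minimizers of $g(0.05,\cdot)$ and $g(0.15,\cdot)$ are about $0.82$ and $1.5$. Since $g(q,\cdot)$ is $2(1-q)$-strongly convex, $\min_a\max\{g(0.05,a),g(0.15,a)\}$ exceeds $v$ by about $0.1$.

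\textbf{The paper's proof has the same hole.} It applies Sion's theorem directly to $\sup_{P\in\calS}\min_{a\in I}$. That needs $\calS$ to be convex, which the paper never checks.

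\textbf{A hypothesis that repairs both arguments} is concavity of $q\mapsto q\,r(q)^2$ on $[q_0-\varepsilon,q_0+\varepsilon]$. It holds for constant $r$. It also holds for small $\varepsilon$ under the hypotheses of Theorem~\ref{fastConvergeVar}, where the second derivative at $q_0$ is $2q_0r(q_0)r''(q_0)<0$. This condition makes $\calS$ convex, so the paper's one-shot swap is valid. It also makes $q\,r(q)=\sqrt{q\cdot q\,r(q)^2}$ concave, so $g(\cdot,a)$ is concave and your second Sion step in $q$ goes through.
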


\begin{proof}
We use the variational form of variance, {\it i.e.} $\Var{Y} = \min_{a \in \mathbb{R}} \E{Y-a}^2$,  
\begin{equation}
    \sup_{P \in \calS} \left( \VarP{P}{\mathbf{x}'\mathbf{R}} - \gamma\EP{P}{\mathbf{x}'\mathbf{R}} \right) = \sup_{P \in \calS} \min_{a \in \Real} \left( \EP{P}{(\mathbf{x}'\mathbf{R} - a)^2 - \gamma \cdot \mathbf{x}'\mathbf{R}} \right).
\end{equation}

 We first show that the inner optimization can be restricted to a compact interval $I$ free of $P\in \calS$ and $\mathbf{x}\in\Delta$.
It suffices to show an $I$ containing the minimum point $\EP{P}{\x'\R}$ for all $P\in \calS$ and $\mathbf{x}\in\Delta$.  As f is continuous, let $M$ be the maximum of $f$ on $[q_0-\varepsilon,q_0+\varepsilon]$. Then for any $P_{S}$ in $\mathbb{B}_{W_2}(P_{S_0}, M)$ and $\mathbf{x}\in\Delta$, if we denote $\pi$ to be the optimal 2-Wasserstein coupling between $P_S$ and $P_{S_0}$ and $(\R,\R_0) \sim \pi$, we have
\begin{multline*}
    \EP{P_{S}}{\|\R\|^2} = \EP{\pi}{\|\R_0+\R-\R_0\|^2} \leq 2\left(\EP{\pi}{\|\R_0\|^2}+\EP{\pi}{\|\R-\R_0\|^2}\right)\\
    \leq 2\left(\EP{P_{S_0}}{\|\R\|^2}+M^2\right),
\end{multline*}

and 
\[
\left[\EP{P_{S}}{\x'\R}\right]^2 \leq\EP{P_{S}}{\x'\R}^2 \leq \|\x\|^2\EP{P_{S}}{\|\R\|^2} \leq 2(\EP{P_{S_0}}{\|\R\|^2}+M^2). 
\]
Similarly, 
$$
\left(\EP{P_{N}}{\x'\R}\right)^2 \leq\left(\EP{P_{N}}{(\x'\R)^2}\right) \leq \|\x\|^2\EP{P_{N}}{\|\R\|^2} \leq \EP{P_{N}}{\|\R\|^2}.
$$ 
Since,  any $P$ in $\calS$ is a mixture of $P_{N}$ and a $P_{S}$ in $\mathbb{B}_{W_2}(P_{S_0}, M)$, with mixture weight $q$ in $[q_0-\varepsilon,q_0+\varepsilon]$, combining the above statements, we have,
$$
\left\vert\EP{P}{\x'\R}\right\vert = \left\vert(1-q)\EP{P_{N}}{\x'\R}+q\EP{P_{S}}{\x'\R}\right\vert \leq \sqrt{2(\EP{P_{S_0}}{\|\R\|^2}+M^2)\vee 
 \EP{P_{N}}{\|\R\|^2}}.
$$
The above shows the existence of our desired $I$. 

Since $I$ is compact, $\EP{P}{(\mathbf{x}'\mathbf{R} - a)^2 - \gamma \cdot \mathbf{x}'\mathbf{R}} $ is convex and continuous in $a$, and concave (linear) and continuous in $P$ with respect to the 2-Wasserstein metric topology (see Theorem 7.12 in \citet{villani2003topics} for implications of $W_2$ convergence), from Sion's Minimax Theorem (see Theorem \ref{sion} in the Appendix), we can swap the min and sup operators:

\begin{equation}
\sup_{P \in \calS} \min_{a \in I} \left( \EP{P}{(\mathbf{x}'\mathbf{R} - a)^2 - \gamma \cdot \mathbf{x}'\mathbf{R}} \right)=\min_{a \in I} \sup_{P \in \calS} \left( \EP{P}{(\mathbf{x}'\mathbf{R} - a)^2 - \gamma \cdot \mathbf{x}'\mathbf{R}} \right).
\label{eq:swapped_problem}
\end{equation}
It follows that, for fixed $\x,a $, 
\[
\sup_{P \in \calS} \left( \EP{P}{(\mathbf{x}'\mathbf{R} - a)^2 - \gamma(\mathbf{x}'\mathbf{R})} \right)=\sup_{|q-q_0| \le \varepsilon} \left\{ (1-q)\EP{P_{N}}{\psi_{a,\gamma}(\mathbf{x}'\mathbf{R})} + q V(q, \x, a) \right\}.
\]
\end{proof}

The above lemma implies that for fixed $\x,a$, the inner problem can be solved sequentially; first by finding the worst-case distribution $G_q$ in the Wasserstein ball, and then by finding the worst-case mixture weight $q$.

We use the duality result of Theorem~1 in \citet{gao2023distributionally} (see Theorem \ref{duality} in the Appendix) to derive the following representation of $V(\cdot,\cdot,\cdot)$:
\begin{equation}
\label{V_dual}
V(q, \x, a) = \inf_{\lambda \ge 0} \left\{ \lambda r(q)^2 + \EP{P_{S_0}}{ \sup_{\r} \left( \psi_{a,\gamma}(\mathbf{x}'\r) - \lambda \|\r - \R\|^2 \right) } \right\},
\end{equation}
where $\R$ is a random vector drawn from the reference distribution $P_{S_0}$. The key to the tractability of the infimum in \eqref{V_dual} is a closed-form expression for the inner supremum. Towards this, we establish the following two lemmas.

\begin{lemma}
\label{inner r}
    \begin{multline}
    \label{EG0}
        \EP{P_{S_0}}{ \sup_{\r} \left( \psi_{a,\gamma}(\mathbf{x}'\r) - \lambda \|\r - \R\|^2 \right) }\\
        =\begin{cases}
             \frac{\lambda}{\lambda - \|\mathbf{x}\|^2} \left( \x'\Sigma_{P_{S_0}} \x + \left(\x' \mu_{P_{S_0}} - \frac{2a+\gamma}{2}\right)^2 \right) - a\gamma- \frac{\gamma^2}{4}, &\lambda \geq \|\mathbf{x}\|^2\\
             +\infty, &\lambda < \|\mathbf{x}\|^2,
        \end{cases}
    \end{multline}
    with $\mu_{P_{S_0}} ,\Sigma_{P_{S_0}}$ denoting the mean and variance-covariance matrix of $P_{S_0}$, and the understanding that $\frac{0}{0}:=0,\frac{c}{0}:=\infty \hbox{ for }c>0$.
\end{lemma}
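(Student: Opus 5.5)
Fix a realization $\R$ of the reference distribution and first compute the pointwise supremum $\sup_{\r}\{\psi_{a,\gamma}(\x'\r)-\lambda\|\r-\R\|^2\}$ in closed form; then take the expectation under $P_{S_0}$.

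\textbf{Reduction to one dimension.} Complete the square:
\[
\psi_{a,\gamma}(y)=\Big(y-\tfrac{2a+\gamma}{2}\Big)^2-a\gamma-\tfrac{\gamma^2}{4}.
\]
Write $\r=\R+\mathbf{u}$ and $c:=\x'\R-\frac{2a+\gamma}{2}$. The objective becomes $(c+\x'\mathbf{u})^2-\lambda\|\mathbf{u}\|^2$, up to the constant $-a\gamma-\gamma^2/4$. The case $\x=\mathbf 0$ cannot occur on $\Delta$, but it is covered by the $0/0$ convention anyway. For a fixed value $t=\x'\mathbf{u}$, the smallest $\|\mathbf{u}\|^2$ equals $t^2/\|\x\|^2$, attained at $\mathbf{u}=t\x/\|\x\|^2$ (Cauchy--Schwarz). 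Hence the supremum reduces to
\[
\sup_{t\in\Real}\Big\{(c+t)^2-\tfrac{\lambda}{\|\x\|^2}t^2\Big\}.
\]

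\textbf{Case analysis in $\lambda$.}
\begin{itemize}
\item If $\lambda<\|\x\|^2$, the leading coefficient $1-\lambda/\|\x\|^2$ is positive. The supremum is $+\infty$ for every $\R$, so the expectation is $+\infty$.
\item If $\lambda>\|\x\|^2$, the function is strictly concave in $t$. The first-order condition gives $t^*=c\|\x\|^2/(\lambda-\|\x\|^2)$, with value $\frac{\lambda}{\lambda-\|\x\|^2}c^2$.
\item If $\lambda=\|\x\|^2$, the expression is $c^2+2ct$. Its supremum is $+\infty$ unless $c=0$, in which case it is $0$. This matches $\frac{\lambda}{0}c^2$ under the conventions $c^2/0=\infty$ and $0/0=0$.
\end{itemize}
So pointwise the supremum equals $\frac{\lambda}{\lambda-\|\x\|^2}\big(\x'\R-\frac{2a+\gamma}{2}\big)^2-a\gamma-\frac{\gamma^2}{4}$ for $\lambda\ge\|\x\|^2$.

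\textbf{Taking expectations.} Under $P_{S_0}$,
\[
\EP{P_{S_0}}{\big(\x'\R-\tfrac{2a+\gamma}{2}\big)^2}=\x'\Sigma_{P_{S_0}}\x+\Big(\x'\mu_{P_{S_0}}-\tfrac{2a+\gamma}{2}\Big)^2 .
\]
This is the bias--variance decomposition and is finite since $P_{S_0}$ has second moments. Multiplying by $\frac{\lambda}{\lambda-\|\x\|^2}$ gives \eqref{EG0} for $\lambda>\|\x\|^2$.

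\textbf{Boundary case and the main obstacle.} The only delicate point is $\lambda=\|\x\|^2$. There the pointwise value is $+\infty$ on the event $\{c\neq0\}$ and $0$ on $\{c=0\}$. Since $\Sigma_{P_{S_0}}$ is positive definite and $\x\ne\mathbf 0$, $\x'\R$ is nondegenerate, so $\{c\neq 0\}$ has positive probability. The expectation is therefore $+\infty$. This agrees with the formula, because $\x'\Sigma_{P_{S_0}}\x>0$ makes the numerator positive and $c/0=\infty$.

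Measurability of the pointwise supremum is immediate from the closed form, so no further issue arises.
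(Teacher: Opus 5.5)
Your argument is correct and is essentially the paper's proof: fix $\R$, minimize $\|\r-\R\|$ over the hyperplane on which $\x'\r$ is fixed to obtain a one-dimensional quadratic, solve it in closed form, and take expectations using $\x'\Sigma_{P_{S_0}}\x$ and $\x'\mu_{P_{S_0}}$. Your explicit treatment of the boundary $\lambda=\|\x\|^2$ (using $\Sigma_{P_{S_0}}\succ 0$ to get $+\infty$ on an event of positive probability) is more careful than the paper, which covers that case only through the notational convention; one small slip is that your aside claiming $\x=\mathbf 0$ is handled by the $0/0$ convention fails at $\lambda=0$, where the supremum is $\psi_{a,\gamma}(0)=a^2$ rather than $-a\gamma-\gamma^2/4$, but that case is excluded on $\Delta$ and nothing else is affected.
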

\begin{proof}
We derive an expression for the supremum by first optimizing on $\mathbf{r}$ with the value of the portfolio return $\mathbf{x}'\r$ set to $y$, and then we optimize on $y$.
\begin{equation}
\sup_{\r} (\psi_{a,\gamma}(\mathbf{x}'\r) - \lambda \|\r - \R\|^2) = \sup_{y \in \mathbb{R}} \left\{ \psi_{a,\gamma}(y) - \inf_{\r:\mathbf{x}'\r=y} \lambda\|\r - \R\|^2 \right\}.
\end{equation}
The infimum is attained at 
$$\r^*=\R-\left(\frac{\x'\R-y}{\|\x\|^2} \right)\x,$$ making
\begin{equation}
\inf_{\r:\x'\r=y} \|\r - \R\| = \frac{|y - \mathbf{x}'\R|}{\|\mathbf{x}\|}.
\end{equation}
Substituting this back, we obtain a one-dimensional maximization problem over $y$. Denoting $\mathbf{x}'\R$ by $Y$, we now derive a closed form expression for, 
\begin{multline}
    H(\lambda,Y):=\sup_{y \in \mathbb{R}} \left\{ (y-a)^2 - \gamma y - \frac{\lambda}{\|\mathbf{x}\|^2}(y - Y)^2 \right\} \\
    =\sup_y  \left\{\left(1 - \frac{\lambda}{\|\mathbf{x}\|^2}\right)y^2 + \left(2\frac{\lambda Y}{\|\mathbf{x}\|^2} - 2a - \gamma\right)y + \left(a^2 - \frac{\lambda Y^2}{\|\mathbf{x}\|^2}\right)\right\}.
\end{multline}
The above quadratic maximization is concave if $\lambda > \|\mathbf{x}\|^2$. Simple calculations show
\begin{equation}
    H(\lambda,Y) = \begin{cases}
        \frac{\lambda}{\lambda - \|\mathbf{x}\|^2}\left(Y - \frac{2a+\gamma}{2}\right)^2 - a\gamma-\frac{\gamma^2}{4}, & \lambda \geq \|\mathbf{x}\|^2\\
        +\infty , & \lambda < \|\mathbf{x}\|^2
    \end{cases}
    \label{eq:H_y0_simplified}
\end{equation}
with the above-defined extended real number convention. Combining, we get \eqref{EG0} by using the fact that  
\[
\mathbb{E}_{P_{S_0}}[Y]=\x' {\mu}_{P_{S_0}}, \quad \hbox{and} \quad  
 \text{Var}_{P_{S_0}}(Y)=\x'\Sigma_{P_{S_0}} \x, 
\]
 in $\EP{P_{S_0}}{H(\lambda,Y)}$.
\end{proof}

\begin{lemma}
\label{v(q,x,a)}
\[
V(q, \x, a)=\left( r(q)\|\mathbf{x}\| + \sqrt{\x'\Sigma_{P_{S_0}} \x + \left(\x' \mu_{P_{S_0}} - \frac{2a+\gamma}{2}\right)^2} \right)^2 - a\gamma - \frac{\gamma^2}{4}
\]
\end{lemma}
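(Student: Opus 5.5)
We start from the dual representation \eqref{V_dual} and substitute the closed form of the inner expectation from Lemma \ref{inner r}. Write $n:=\|\x\|^2$ and $S:=S_\gamma(\x,a)$, so that $S^2=\x'\Sigma_{P_{S_0}}\x+(\x'\mu_{P_{S_0}}-\frac{2a+\gamma}{2})^2$. Since $\x\in\Delta$, we have $n>0$. Values $\lambda<n$ give $+\infty$ and can be discarded, so
\[
V(q,\x,a)=\inf_{\lambda\ge n}\left\{\lambda r(q)^2+\frac{\lambda}{\lambda-n}S^2\right\}-a\gamma-\frac{\gamma^2}{4}.
\]
The task therefore reduces to a one-dimensional convex minimization in $\lambda$.

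To carry it out, substitute $t=\lambda-n>0$. The objective becomes
\[
n r^2 + t r^2 + S^2 + \frac{nS^2}{t}.
\]
Generically $S>0$, because $\Sigma_{P_{S_0}}$ is positive definite and $\x\neq 0$. When $r(q)>0$, the AM--GM inequality gives $t r^2 + nS^2/t\ge 2rS\sqrt n$, with equality at $t^*=\sqrt n\,S/r$. The infimum is then
\[
nr^2+2r\sqrt n S+S^2=(r\|\x\|+S)^2,
\]
which is exactly the claimed formula.

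Two edge cases remain. If $r(q)=0$, the ball is the singleton $\{P_{S_0}\}$ and the objective $\lambda\cdot 0+\frac{\lambda}{\lambda-n}S^2$ decreases to $S^2$ as $\lambda\to\infty$. The value is therefore $S^2$, which again matches $(0+S)^2$. If $S=0$, the convention $0/0=0$ makes the objective $\lambda r^2$ on $\lambda\ge n$, so the infimum is $nr^2=(r\|\x\|)^2$. The endpoint $\lambda=n$ needs care here, and this is the only place the extended-real conventions stated in Lemma \ref{inner r} matter.

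The step most likely to cause trouble is justifying the dual representation \eqref{V_dual} itself, that is, checking that the hypotheses of Theorem~1 of \citet{gao2023distributionally} hold. The integrand $\psi_{a,\gamma}(\x'\r)$ is quadratic, so it has growth of order at most $2$, matching the $W_2$ cost, and $P_{S_0}$ has finite second moments. I would also confirm that exchanging the expectation with the pointwise formula for $H(\lambda,Y)$ is harmless. For $\lambda>n$, $H$ is a finite quadratic in $Y$ whose expectation is computed from the mean and covariance of $P_{S_0}$. For $\lambda<n$ the supremum is $+\infty$ almost surely, so such $\lambda$ never attain the infimum.

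Once these points are settled, the remaining algebra is the AM--GM computation above.
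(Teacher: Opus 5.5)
Your proposal is correct and follows the paper's route. Like the paper, you substitute Lemma \ref{inner r} into the dual \eqref{V_dual} and minimize the scalar function $\lambda r(q)^2+\frac{\lambda}{\lambda-\|\x\|^2}S^2$; your AM--GM step after the shift $t=\lambda-\|\x\|^2$ recovers exactly the paper's minimizer $\lambda^*=\|\x\|^2+\|\x\|S/r(q)$. Your separate treatment of $r(q)=0$, where the paper's $\lambda^*$ divides by zero, is a small improvement, and the $S=0$ case cannot occur, since $\Sigma_{P_{S_0}}\succ 0$ and $\x\neq\mathbf{0}$ force $S>0$.
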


\begin{proof}
From Lemma \ref{inner r},
    for a fixed $\x,a,q$, 
\begin{equation}
    V(q, \x, a) = \inf_{\lambda \ge \|\mathbf{x}\|^2} \left\{ \lambda r(q)^2 + \frac{\lambda}{\lambda - \|\mathbf{x}\|^2} \left( \x'\Sigma_{P_{S_0}} \x + \left(\x' \mu_{P_{S_0}} - \frac{2a+\gamma}{2}\right)^2 \right) - a\gamma- \frac{\gamma^2}{4} \right\}.
\end{equation}
Let $\alpha:=\x'\Sigma_{P_{S_0}} \x + \left(\x' \mu_{P_{S_0}} - \frac{2a+\gamma}{2}\right)^2$. The optimal $\lambda$ in minimizing the function
\begin{equation}
    L(\lambda) = \lambda r(q)^2 + \left(\frac{\lambda}{\lambda - \|\mathbf{x}\|^2}\right) \alpha , \quad \text{subject to } \lambda \geq \|\mathbf{x}\|^2.
\end{equation}
 is given by
\begin{equation}
    \lambda^*  = \|\mathbf{x}\|^2 + \frac{\|\mathbf{x}\|\sqrt{\alpha}}{r(q)},
\end{equation}
and the optimal value is given by
\begin{align*}
    \min_{\lambda \geq \|\x\|^2} L(\lambda) = (r(q)\|\mathbf{x}\| + \sqrt{\alpha})^2.
\end{align*}
Substituting the expression for $\alpha$ back gives the following closed-form solution:
\begin{equation}
    V(q, \x, a) = \left( r(q)\|\mathbf{x}\| + \sqrt{\x'\Sigma_{P_{S_0}} \x + \left(\x' \mu_{P_{S_0}} - \frac{2a+\gamma}{2}\right)^2} \right)^2 - a\gamma - \frac{\gamma^2}{4}.
\end{equation}
\end{proof}

{Combining Lemma \ref{alternateFormOfOpt1}, \ref{inner r}, \ref{v(q,x,a)}, Theorem \ref{minimax} is proved
}

\subsubsection{Proof of Proposition \ref{subgradientJ}}
A subgradient of $J$ at $(\x_k,a_k)$ is $\mathbf{g}_k = \nabla_{(\x,a)} h(q_k^*, \mathbf{x}_k, a_k)$. Recall our notation $$S_{\gamma}(\x_k,a_k) = \sqrt{\x'\Sigma_{P_{S_0}} \x + \left(\x' \mu_{P_{S_0}} - \frac{2a_k+\gamma}{2}\right)^2}.$$
\textbf{Calculating a subgradient with respect to $\mathbf{x}$:} 
$$\nabla_\mathbf{x} \EP{P_{N}}{\psi_{a,\gamma}(\mathbf{x}'\mathbf{R})}=2(\Sigma_{P_N} + \mu_{P_N}\mu_{P_N}')\mathbf{x}_k - 2a_k\mu_{P_N} - \gamma\mu_{P_N}.$$

\begin{align*}
    \nabla_\mathbf{x} V(q_k^*,  \mathbf{x}_k, a_k) &= 2\left( r(q_k^*)\|\mathbf{x}_k\| + S_{\gamma}(\x_k,a_k) \right) \cdot \nabla_\mathbf{x} \left( r(q_k^*)\|\mathbf{x}_k\| + S_{\gamma}(\x_k,a_k) \right) \\
    &= 2\left( r(q_k^*)\|\mathbf{x}_k\| + S_{\gamma}(\x_k,a_k) \right) \left(r(q_k^*) \frac{\mathbf{x}_k}{\|\mathbf{x}_k\|}+\nabla_x S_{\gamma}(\x_k,a_k)\right),
\end{align*}

where $$\nabla_x S_{\gamma}(\x_k,a_k)= \frac{\Sigma_{P_{S_0}} \x+\left(\x' \mu_{P_{S_0}} - \frac{2a_k+\gamma}{2}\right) \cdot \mu_{P_{S_0}}}{S_{\gamma}(\x_k,a_k)}$$
A subgradient of $J$ with respect to $\x$ is the weighted sum:
\begin{multline*}
    \mathbf{g}_{k, \mathbf{x}} = (1-q_k^*)\left[2(\Sigma_{P_N} + \mu_{P_N}\mu_{P_N}')\mathbf{x}_k - 2a_k\mu_{P_N} - \gamma\mu_{P_N}\right] + \\q_k^* \left[ 2\left( r(q_k^*)\|\mathbf{x}_k\| + S_{\gamma}(\x_k,a_k) \right) \left(r(q_k^*) \frac{\mathbf{x}_k}{\|\mathbf{x}_k\|}+\frac{\Sigma_{P_{S_0}} \x+\left(\x' \mu_{P_{S_0}} - \frac{2a_k+\gamma}{2}\right) \cdot \mu_{P_{S_0}}}{S_{\gamma}(\x_k,a_k)}\right) \right].
\end{multline*}
\textbf{Calculating a subgradient with respect to $a$:}
 $$\frac{\partial}{\partial a} \EP{P_{N}}{\psi_{a,\gamma}(\mathbf{x}'\mathbf{R})}=-2(\mathbf{x}_k'\mu_{P_N} - a_k).$$
 
\begin{equation*}
    \frac{\partial S_{\gamma}(\x_k,a_k)}{\partial a_k} = \frac{1}{2S_{\gamma}(\x_k,a_k)} \cdot 2\left(\x' \mu_{P_{S_0}} - \frac{2a_k+\gamma}{2}\right) \cdot \left(-\frac{2}{2}\right) = -\frac{1}{S_{\gamma}(\x_k,a_k)}\left(\x' \mu_{P_{S_0}} - \frac{2a_k+\gamma}{2}\right).
\end{equation*}
Using the chain rule on $V$:
\begin{align*}
   \frac{\partial}{\partial a_k} V(q_k^*,  \mathbf{x}_k, a_k) &= 2\left( r(q_k^*)\|\mathbf{x}_k\| + S_{\gamma}(\x_k,a_k) \right) \frac{\partial S_{\gamma}(\x_k,a_k)}{\partial a_k} - \gamma \\
    &= -2\,\frac{r(q_k^*)\|\mathbf{x}_k\| + S_{\gamma}(\x_k,a_k)}{S_{\gamma}(\x_k,a_k)}\left(\x' \mu_{P_{S_0}} - \frac{2a_k+k}{2}\right) - \gamma.
\end{align*}
A subgradient of $J$ with respect to $a$ is the weighted sum:
\begin{equation*}
    g_{k, a} = (1-q_k^*)[-2(\mathbf{x}_k'\mu_{P_N} - a_k)] + q_k^*\left[-2\,\frac{r(q_k^*)\|\mathbf{x}_k\| + S_{\gamma}(\x_k,a_k)}{S_{\gamma}(\x_k,a_k)}\left(\x' \mu_{P_{S_0}} - \frac{2a_k+\gamma}{2}\right) - \gamma\right].
\end{equation*}

\subsubsection{Proof of Lemma \ref{propertieshJ}}

\begin{proof}
\begin{enumerate}[label=\roman*.]
    \item  Towards showing $h$ is strongly convex in $(\x,a)$, we note that 
    \begin{align*}
        \nabla_{(\x,a)}^2\EP{P_N}{\psi_{a,\gamma}(\mathbf{x}'\mathbf{R})}&=\nabla_{(\x,a)}^2\EP{P_N}{(\xr-a)^2-\gamma \xr}\\&=2\EP{P_N}{\begin{pmatrix}
    \R \\
    -1
    \end{pmatrix} (\R',-1)} \\
    &=\begin{pmatrix}\Sigma_{P_{N}}+\mu_{P_{N}}\mu_{P_{N}}' & -\mu_{P_{N}}\\[4pt]-\mu_{P_{N}}' & 1\end{pmatrix}.
    \end{align*}
    \[
    \]
With the block decomposition \(M=\begin{pmatrix}A&b\\ b'&c\end{pmatrix}\) of this Hessian, where 
\[
A=\Sigma_{P_{N}}+\mu_{P_{N}}\mu_{P_{N}}',\,b=-\mu_{P_{N}},\,\hbox{and } c=1,
\] the Schur complement of block \(c\) of matrix $M$ is \(A-bc^{-1}b'=\Sigma_{P_{N}}\), which is positive definite by assumption; since \(c>0\) it follows from the Schur-complement criterion that \(M\succ0\). %
With \(t:=\tfrac{2a+\gamma}{2}\) and \(z:=(\mathbf{x},t)'\), we have  
\[
S_\gamma(\mathbf{x},a)^2=\mathbf{x}'\Sigma_{P_{S_0}}\mathbf{x}+(\mathbf{x}'\mu_{P_{S_0}}-t)^2
= z'\begin{pmatrix}\Sigma_{P_{S_0}}+\mu_{P_{S_0}}\mu_{P_{S_0}}' & -\mu_{P_{S_0}}\\[4pt]-\mu_{P_{S_0}}' & 1\end{pmatrix}z=:z'Bz,
\]
where $B \succ 0$ from the Schur-complement criterion.
Since $\x$ is restricted to a probability simplex \(\Delta\), no two distinct points in the domain lie on the same positive ray through the origin. Therefore \(S_\gamma(\mathbf{x},a)=\sqrt{z'Bz}\), being a norm, is strictly convex on the domain, though convexity suffices for our development. We hence have $$ V(q, \x, a)=\left( r(q) \|\x\|+S_\gamma(\mathbf{x},a)\right)^2$$ is convex in $(\x,a)$ as a result of an increasing convex function composed with a convex function being convex. For $q<1$, we can now conclude that $h$ is strongly convex on $(\x,a)$, as
\[
\nabla_{(\x,a)}^2 h(q,\x,a)  \succeq (1-q) \nabla_{(\x,a)}^2\EP{P_N}{\psi_{a,\gamma}(\mathbf{x}'\mathbf{R})} = (1-q)M.
\]
As a result, for a fixed $q<1$, $h$ is $((1-q)\lambda_M)$-strongly convex over $(\x,a)$, with $\lambda_M$ the smallest eigenvalue of $M$.

\item Since we assume $q_0+\varepsilon<1$, $J$ is $((1-q_0-\varepsilon)\lambda_M)$-strongly convex as the supremum of strongly convex functions.  $J$ is $K_1$-Lipschitz, since it has a subgradient bounded by $K_1$ for every $(\x,a)$.
\item      Strongly concavity of $h$ implies that $\argmax_q h(q, \x, a)$ is a singleton, say containing $q^*(\x,a)$. We know from Danskin's Theorem, $\nabla J(\x,a)=\nabla_{(\x,a)} h(q^*(\x,a),\x,a)$. Since $\nabla h$ is Lipschitz, it suffices to prove $q^*$ is Lipschitz. 
Define 
$$F(q,\x,a)= h_q(q,\x,a).$$ 
Since $\frac{\partial^2}{\partial q^2} h$ is a continuous functions on a compact domain, and $h$ is strongly concave, we have $ F_q=h_{qq} \leq -\alpha<0$ for a suitable positive constant $\alpha$. 

Recall that
\[
 \left\|\frac{\partial}{\partial (\x,a)} F\right\| = \left\|\frac{\partial^2}{\partial q \partial (\x,a)} h\right\| \leq K_3.
\]

If we had $F(q^*(\x, a),\x, a)=0$ for all $\x,a$, it is straightforward to see that $q^*$ is a $ \frac{K_3}{\alpha}-$Lipschitz function. However, the above first-order condition need not hold since $q$ lies in a bounded interval. Nevertheless, we will show that this holds by using the KKT condition, which accounts for the interval constraint. Consider $(\x_1, a_1)$ and $(\x_2, a_2)$, with $q^*(\x_1, a_1) < q^*(\x_2, a_2)$. Since $q^*(\x_1, a_1)$ may equal $q_0-\varepsilon$, and $q^*(\x_2, a_2)$ may equal $q_0+\varepsilon$, the KKT condition implies $$F(q^*(\x_1, a_1),\x_1, a_1)-F(q^*(\x_2, a_2),\x_2, a_2)\leq 0.$$
Therefore, using the mean value theorem, we have, 
\begin{align*}
    0&\geq F(q^*(\x_1, a_1),\x_1, a_1)-F(q^*(\x_2, a_2),\x_2, a_2) \\
    &=(F(q^*(\x_1, a_1),\x_1, a_1)-F(q^*(\x_1, a_1),\x_2, a_2))\\
    &\quad+(F(q^*(\x_1, a_1),\x_2, a_2)-F(q^*(\x_2, a_2),\x_2, a_2))\\
    &= \frac{\partial F}{\partial (\x,a)}(q^*(\x_1, a_1),\x_0,a_0) \cdot ((\x_1, a_1)-(\x_2, a_2)) \\
    & \quad+ F_q(q_0,\x_2, a_2) \cdot (q^*(\x_1, a_1)-q^*(\x_2, a_2)) \\
    & \geq -K_3 \left\|(\x_1, a_1)-(\x_2, a_2) \right\|+\alpha ((q^*(\x_2, a_2)-q^*(\x_1, a_1))),
\end{align*}
where $(\x_0,a_0)$ is a point lying on the segment between $(\x_1,a_1)$ and $(\x_2,a_2)$ and 
\[
q_0 \in (q^*(\x_1, a_1),  q^*(\x_2, a_2)).
\]
As a result, 
\[
   \left\vert q^*(\x_1, a_1)-q^*(\x_2, a_2)\right\vert \leq \frac{K_3}{\alpha} \left\|(\x_1, a_1)-(\x_2, a_2) \right\|
\]

Therefore, $q^*$ is a $ \frac{K_3}{\alpha}-$Lipschitz function. Recall $\nabla J(\x,a)=\nabla_{(\x,a)} h(q^*(\x,a),\x,a)$. By using a mean value theorem for vector-valued maps (see Theorem 12.9 in \citet{Apostol:105425}),
\begin{align*}
       &\left\| \nabla J(\x_1, a_1)-\nabla J(\x_2, a_2)\right\| \\
       &\leq \left\| \nabla_{(\x,a)} h(q^*(\x_1,a_1),\x_1,a_1)-\nabla_{(\x,a)} h(q^*(\x_2,a_2),\x_1,a_1)\right\|  \\
       & \quad \quad +\left\| \nabla_{(\x,a)} h(q^*(\x_2,a_2),\x_1,a_1)-\nabla_{(\x,a)} h(q^*(\x_2,a_2),\x_2,a_2)\right\| \\
       & \leq K_3 \cdot \frac{K_3}{\alpha}  \left\|(\x_1, a_1)-(\x_2, a_2) \right\| + K_2 \left\|(\x_1, a_1)-(\x_2, a_2) \right\| \\
       & \leq \left( \frac{K_3^2}{\alpha}  + K_2 \right) \left\|(\x_1, a_1)-(\x_2, a_2) \right\| ,
\end{align*}
where we recall the definition of 
\[K_2 :=  \max_{(q,\x,a) \in \Omega}\left\lVert\frac{\partial^2}{\partial (\x,a)^2} h \right\rVert_2.
\]
As a result, $\nabla J$ is $\left( \frac{K_3^2}{\alpha}  + K_2 \right)$-Lipschitz. $J$ is $\left(  \frac{K_3^2}{\alpha}  + K_2 \right)$-smooth.

\end{enumerate}

\end{proof}

\subsubsection{Proof of Theorem \ref{fastConvergeVar}}

\begin{proof}
    We first show there exists $\varepsilon_0$ such that $h$ is strongly concave in $q$ on $ [q_0-\varepsilon_0,q_0+\varepsilon_0]$.
    \[
\begin{aligned}
h_{qq}(q,\x,a)
&=2V_q(q,\x,a)+qV_{qq}(q,\x,a)\\
&=4r'(q)\Big(r(q)\|\x\|^2+\|\x\|\,S_{\gamma}(\x,a)\Big)
+2qr''(q)\Big(r(q)\|\x\|^2+\|\x\|\,S_{\gamma}(\x,a)\Big)\\
&\quad +2q\big(r'(q)\big)^2\|\x\|^2.
\end{aligned}
\]
Let
$$K_5:=\max_{q \in [0,1]}r(q),\, \hbox{and }K_6:=\max_{\x \in \Delta, a \in I}S_\gamma(\x,a).$$
Define an auxiliary function 
$$\bar{h}_{qq}(q):=4\left\vert r'(q) \right \vert \left(K_5+K_6 \right)
+2qr''(q)r(q)/d+2q\big(r'(q)\big)^2.$$
Since $r''$ is continuous, there exists an $\varepsilon_1>0$, such that $\bar{h}_{qq}(q) \geq h_{qq}(q,\x,a)$ on $$q \in [q_0-\varepsilon_1,q_0+\varepsilon_1]$$ for all $\x \in \Delta, a \in I$.

Since $r'(q_0)=0$, we have 
   \[
   \bar{h}_{qq}(q_0)=2 q_0 r(q_0) r''(q_0)/d<0.
   \]
   Since $\bar{h}_{qq}$ is continuous, there exists $\varepsilon_2>0$ such that $\bar{h}_{qq}(q)\leq -K_4 <0$ on $q \in [q_0-\varepsilon_2,q_0+\varepsilon_2]$. Taking $\varepsilon_0=\varepsilon_1 \wedge \varepsilon_2$ shows $h$ is strongly concave in $q$ on $ [q_0-\varepsilon_0,q_0+\varepsilon_0]$. Combining Lemma \ref{propertieshJ} ii., iii., and Theorem \ref{fastConverge} concludes the proof.
\end{proof}

\subsubsection{Proof of Theorem \ref{oneOverD}}

\begin{proof}
We first observe the fact that the minimum $l_2$ norm vector in the simplex is clearly $\x=(1/d) \cdot \mathbf{1}$, corresponding to the equally weighted portfolio.
\begin{align*}
    \frac{h(q,\x,a)}{C^2}
    &= \frac{(1-q)\EP{P_{N}}{\psi_{a,\gamma}(\x'\R)}}{C^2} + q \left( r_{\text{base}}(q)^2 \|\x\|^2 + \frac{2r_{\text{base}}(q)\|\x\| S_\gamma(\x,a)}{C} \right.\\
    & \hspace{6cm}\left.+ \frac{S_\gamma(\x,a)^2 - a\gamma - \gamma^2/4}{C^2} \right)\\
    & \rightrightarrows q \cdot r_{\text{base}}(q)^2 \|\x\|^2,
\end{align*}
as  $C \to \infty$, where uniform convergence is over $q,\x,a$, as a result of the boundedness of the other terms on the numerators.

We let $C>1, K_7<\infty$ such that
$$\left\vert\frac{(1-q)\EP{P_{N}}{\psi_{a,\gamma}(\x'\R)}}{C^2} + q \left(  \frac{2r_{\text{base}}(q)\|\x\| S_\gamma(\x,a)}{C} + \frac{S_\gamma(\x,a)^2 - a\gamma - \gamma^2/4}{C^2} \right)\right\vert \leq \frac{K_7}{C}.$$
Due to continuity of $r_{\text{base}}$, we denote 
\[
\max_{q \in [q_0-\varepsilon, q_0+\varepsilon]} q \cdot r_{base}(q)^2 = K_8>0,
\] 
with $q^*$ being the maximum point. Denote the nonempty set $I_q=\left\{q:q \cdot r_{\text{base}}(q)^2 \ge \frac{K_8}{2}\right\}$.
When $C>\frac{4K_7n}{K_8}\vee 1$, it is easy to see 
\[
(\x_C^*, a_C^*)=\argmin_{\x \in \Delta,a \in I}\;\max_{q \in I_q} \frac{h(q,\x,a)}{C^2},
\]
as for a fixed $\x,a$ and $q_0 \notin I_q$, 
\[\frac{h(q^*,\x,a)}{C^2}-\frac{h(q_0,\x,a)}{C^2} \geq \frac{K_8}{2} \cdot \frac{1}{d}-2\frac{K_7}{C}\geq0.\] 

Let $\x^*=(\frac{1}{d},\ldots,\frac{1}{d})$ and $\mathbf{1}'\boldsymbol{\eta}=0$. For any $q \in I_q$, $a,a' \in I$ if $\|\boldsymbol{\eta}\| > \sqrt{\frac{4K_7}{CK_8}}$,
\[
\frac{h(q,\x^*+\boldsymbol{\eta},a')}{C^2}-\frac{h(q,\x,a^*)}{C^2} \geq \frac{K_8}{2}\|\boldsymbol{\eta}\|^2-2\frac{K_7}{C}>0.
\]
Therefore, $\x^*+\boldsymbol{\eta}$ cannot be the minimum point. Therefore, as $C$ increases, the minimum point $$\x_C^* \in B_{\sqrt{\frac{4K_7}{CK_8}}}(\x^*),$$ is guaranteed to lie in a shrinking ball around the equal-weighted portfolio, with radius $\sqrt{\frac{4K_7}{CK_8}}$ decreasing to 0.

\end{proof}

\section{Mean-CVaR Disutility function}
Variance, as a risk measure, has limitations; for example, it treats departures from the mean symmetrically, is not positively homogeneous, and is not shift-equivariant. In insurance regulation, for this reason and others, the preference is for the coherent risk measure, conditional value-at-risk (CVaR), also known as the Tail Value-at-Risk (T-VaR) and the Condition Tail Expectation (CTE).  In this section, we consider an alternative formulation of the problem from the previous section, with CVaR replacing variance as the risk measure.

There are a few different definitions of CVaR; for our development, the following variational definition of \citet{rockafellar2000optimization} is the most pertinent: 

\begin{definition}
Let $X$ be a random variable with a finite mean. Then for $p\in(0,1)$, the $p$-level conditional value at risk of $X$ is denoted by $\CVaR{X}$ and given by,
\begin{equation}\label{TVaRdefn}
    \CVaR{X} = \inf_{\tau \in \mathbb{R}} \left\{ \tau + \frac{1}{1-p} \E{\max(X - \tau, 0)} \right\}.
\end{equation}
\end{definition}

Again, the true distribution of the return $\R$ is constrained to a two-component mixture structure, with a component distribution and the weight unknown. The investor's mean-CVaR disutility function is a weighted sum of the expected loss and its Conditional Value-at-Risk at a confidence level $p \in (0,1)$:
\begin{equation}
    \E{-\x'\R} + \rho \CVaR{-\x'\R},
\end{equation}
where $\rho > 0$ is a risk-aversion parameter. While the true distribution of returns, denoted by $P$, is unknown, investor beliefs restrict it to an ambiguity set $\calS$. The investor's robust portfolio design problem reduces to solving the following optimization problem:
\begin{equation}
\tag{Opt 2}
\label{mean-cvar}
    \inf_{\mathbf{x} : \x \in \Delta} \sup_{P \in \calS} \left( \EP{P}{-\x'\R} + \rho \CVaR{-\x'\R} \right).
\end{equation}
Similar to Section 4, we define the ambiguity set $\calS$ as follows.
\begin{equation}
\label{ambi_CVaR}
    \calS = \{(1-q)P_N + qP_S \mid q \in [q_0-\epsilon, q_0+\epsilon], \, P_S \in \mathbb{B}_{W_1}(P_{S_0}, r(q)) \}.
\end{equation}
The only difference from \eqref{ambiset} is the Wasserstein-1 ball of radius $r(q)$ around $P_{S_0}$. We assume that the asset returns under both distributions $P_N$ and $P_{S_0}$ have finite first moments, i.e., $\EP{P_{N}}{\|\mathbf{R}\|} < \infty$ and $\EP{P_{S_0}}{\|\mathbf{R}\|} < \infty$. Since $-\x'\R$ has a finite mean for $x \in \Delta$ under any distribution in $\calS$, its mean-CVaR disutility is well defined for such distributions as well. The main result of this section is the following:

\begin{Theorem}
\label{mean_cvar_minimax_reform}
Define the loss function $\ell_{\x,\tau}(\mathbf{R}) := -\x'\R + \frac{\rho}{1-p}\max(-\x'\R - \tau, 0)$. 
    \eqref{mean-cvar} is equivalent to the following:

\begin{equation}
    \label{minimax_cvar}
    \min_{\mathbf{x} \in \Delta, \tau \in I} J(\x,\tau),
\end{equation}
where 
\begin{equation}\label{eqn:cvarJ}
J(\x,\tau)=\left\{ \rho\tau + \sup_{q \in [q_0\pm\epsilon]} \left( (1-q)\EP{P_{N}}{\ell_{\x,\tau}(\mathbf{R})} + q\left[\EP{P_{S_0}}{\ell_{\x,\tau}(\mathbf{R})} + r(q)(1+\frac{\rho}{1-p})\|\mathbf{x}\|_\infty\right] \right) \right\}.    
\end{equation}

Moreover, $J$ is convex in $(\mathbf{x}, \tau)$.

\qed
\end{Theorem}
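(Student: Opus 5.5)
The plan follows Lemma \ref{alternateFormOfOpt1} closely: use the variational form \eqref{TVaRdefn} of CVaR, restrict $\tau$ to a compact interval, swap the order of optimization with Sion's theorem, and then evaluate the Wasserstein-1 worst case in closed form.

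\textbf{Step 1 (variational form).} By \eqref{TVaRdefn}, for any $P\in\calS$,
\[
\EP{P}{-\x'\R}+\rho\,\CVaR{-\x'\R}=\inf_{\tau\in\Real}\Big\{\rho\tau+\EP{P}{\ell_{\x,\tau}(\R)}\Big\}.
\]
Next I will show that $\tau$ can be restricted to a compact interval $I$ that does not depend on $P\in\calS$ or $\x\in\Delta$. The infimum is attained at the $p$-quantile of $-\x'\R$.

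\begin{itemize}
\item Copy the coupling argument of Lemma \ref{alternateFormOfOpt1}, with $W_1$ in place of $W_2$. This gives a uniform bound $\EP{P}{|\x'\R|}\le K$ over $\calS\times\Delta$, with $K$ depending on $\EP{P_N}{\|\R\|}$, $\EP{P_{S_0}}{\|\R\|}$ and $\max r$.
\item Markov's inequality then places every $p$-quantile of $-\x'\R$ inside $I:=[-K/(1-p),\,K/p]$.
\end{itemize}

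\textbf{Step 2 (minimax swap).} With $\tau\in I$ compact, the map $(\tau,P)\mapsto\rho\tau+\EP{P}{\ell_{\x,\tau}(\R)}$ is convex and continuous in $\tau$. It is linear in $P$, hence concave. It is continuous in $P$ for the $W_1$ topology, because $\ell_{\x,\tau}$ is Lipschitz (Theorem 7.12 of \citet{villani2003topics}). The set $\calS$ is convex because $r\ge0$ and the mixture map is affine in $P_S$; if convexity across different $q$ is delicate, I would work directly with the concave-in-$P$ formulation. Sion's Minimax Theorem then gives
\[
\sup_{P\in\calS}\min_{\tau\in I}=\min_{\tau\in I}\sup_{P\in\calS},
\]
so \eqref{mean-cvar} becomes $\min_{\x,\tau}\big\{\rho\tau+\sup_q\big[(1-q)\EP{P_N}{\ell_{\x,\tau}}+q\sup_{P_S\in\mathbb{B}_{W_1}(P_{S_0},r(q))}\EP{P_S}{\ell_{\x,\tau}}\big]\big\}$.

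\textbf{Step 3 (Wasserstein-1 worst case; the main obstacle).} The goal is
\[
\sup_{P_S\in\mathbb{B}_{W_1}(P_{S_0},r)}\EP{P_S}{\ell_{\x,\tau}(\R)}=\EP{P_{S_0}}{\ell_{\x,\tau}(\R)}+r\Big(1+\tfrac{\rho}{1-p}\Big)\|\x\|_\infty ,
\]
with the ground cost taken as the $\ell_1$ norm, so the dual norm is $\|\cdot\|_\infty$.

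\begin{itemize}
\item \emph{Upper bound.} $\ell_{\x,\tau}$ is Lipschitz in $\R$ with constant $(1+\frac{\rho}{1-p})\|\x\|_\infty$. Kantorovich--Rubinstein, or the duality of Theorem \ref{duality}, therefore bounds the left side by the right side.
\item \emph{Lower bound.} The difficulty is that the slope is only attained on the region $\{-\x'\R>\tau\}$, so a uniform shift does not work. Instead, let $j$ be an index with $x_j=\|\x\|_\infty$. Take a fraction $\delta$ of the mass of $P_{S_0}$ and move it by $-(r/\delta)\mathbf{e}_j$; the transport cost is $r$.
\item Once shifted far enough, the moved mass lies in the region where $\ell_{\x,\tau}$ grows at rate $(1+\frac{\rho}{1-p})x_j$. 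The gain is therefore $r(1+\frac{\rho}{1-p})x_j-O(\delta\,\EP{P_{S_0}}{1+|\x'\R|+|\tau|})$.
\item Letting $\delta\downarrow0$ gives the supremum. It need not be attained, which is why the statement uses $\sup$.
\end{itemize}
Substituting this into Step 2 yields \eqref{eqn:cvarJ}.

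\textbf{Step 4 (convexity).} The map $(\x,\tau)\mapsto\ell_{\x,\tau}(\R)$ is a pointwise maximum of affine functions, so it is jointly convex, and expectations preserve convexity. The term $\|\x\|_\infty$ is convex, and its coefficient $q\,r(q)(1+\frac{\rho}{1-p})$ is nonnegative, as are the weights $1-q$ and $q$. Hence each bracket in \eqref{eqn:cvarJ} is convex in $(\x,\tau)$. The supremum over $q$ preserves convexity, and adding $\rho\tau$ keeps it. Finally, $J$ is finite and convex, hence continuous on the compact set $\Delta\times I$, so the minimum in \eqref{minimax_cvar} is attained.
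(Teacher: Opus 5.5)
\textbf{Gap in Step 2 (the paper's proof has the same hole).} Your claim that $\calS$ is convex is false in general. Mix $(1-q_i)P_N+q_iP_{S_i}$, $i=1,2$, with weights $\theta,1-\theta$. The result has mixture weight $\bar q=\theta q_1+(1-\theta)q_2$, and its stress component can sit at $W_1$-distance $[\theta q_1 r(q_1)+(1-\theta)q_2 r(q_2)]/\bar q$ from $P_{S_0}$ (shift both $P_{S_i}$ in the same direction). So convexity holds when $q\mapsto q\,r(q)$ is concave, but not otherwise.

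Your fallback does not help. Linearity in $P$ only gives $\min_{\tau}\sup_{\calS}=\sup_{\mathrm{conv}\,\calS}\min_{\tau}$. Because mean-CVaR is concave in $P$, this can strictly exceed $\sup_{\calS}\min_{\tau}$.

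What Sion does give is the swap on each $q$-slice, since each slice is convex. The inner value is then $\sup_q\min_{\tau\in I}\Phi(\tau,q)$, where $\Phi(\tau,q)$ is $\rho\tau$ plus the expression under $\sup_q$ in \eqref{eqn:cvarJ}. Exchanging $\sup_q$ and $\min_\tau$ needs quasi-concavity in $q$, but $\Phi$ is affine in $q$ plus $q\,r(q)\bigl(1+\tfrac{\rho}{1-p}\bigr)\|\x\|_\infty$.

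The exchange really can fail. Take $d=1$, $P_N=\delta_0$, $P_{S_0}=\delta_{-10}$, $\rho=1$, $p=0.95$, $q\in[0.03,0.07]$. Define $r$ by $21\,q\,r(q)=10.7-m(q)$, where $m(q)=\min(210q,\,10+10q)$ is the mean-CVaR of $(1-q)\delta_0+q\delta_{-10}$.

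\begin{itemize}
\item Every slice has worst-case mean-CVaR $m(q)+21q\,r(q)=10.7$, so \eqref{mean-cvar} equals $10.7$.
\item On $[0,10]$ we have $\Phi(\tau,0.03)=0.4\tau+10.7$ and $\Phi(\tau,0.07)=14.7-0.4\tau$.
\item Also $J(1,\tau)>14.7$ for $\tau\notin[0,10]$, so $\min_\tau J(1,\tau)\ge 12.7$.
\item A slight smoothing of $r$ keeps the gap strict.
\end{itemize}

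Hence your argument, like the paper's, only proves that the value of \eqref{mean-cvar} is at most $\min_{\x,\tau}J$. Equality needs an extra hypothesis, such as concavity of $q\mapsto q\,r(q)$ on $[q_0-\varepsilon,q_0+\varepsilon]$; under it, Sion applies on $I\times[q_0-\varepsilon,q_0+\varepsilon]$.

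\textbf{Comparison with the paper and a smaller slip.} Apart from this, your route is the paper's. The one real difference is the lower bound in Step 3. The paper plugs the Lipschitz constant into the dual of Theorem \ref{duality}, noting that $\sup_{\r}\{\ell_{\x,\tau}(\r)-\lambda\|\r-\R\|_1\}$ equals $\ell_{\x,\tau}(\R)$ for $\lambda\ge(1+\frac{\rho}{1-p})\|\x\|_\infty$ and $+\infty$ otherwise. Your primal perturbation (moving mass $\delta$ by $-(r/\delta)\mathbf{e}_j$) is elementary and correct. Since the paper's balls are open, use radius $r-\eta$ and let $\eta\downarrow 0$.

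Also fix the interval in Step 1. Markov places the $p$-quantiles of $-\x'\R$ in $[-K/p,\,K/(1-p)]$, not $[-K/(1-p),\,K/p]$, because a quantile $t>0$ has $P(-\x'\R\ge t)\ge 1-p$. The paper's symmetric interval avoids this issue.
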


A proof of the above theorem is provided in the next subsection. The objective function $J$ in its statement is a non-differentiable convex function; hence, for its minimization, we propose a projected subgradient descent algorithm as given in Algorithm \ref{algo2} below.
\begin{algorithm}
\caption{Projected Subgradient Descent for Robust Mean-CVaR \label{algo2}}
\begin{algorithmic}[1]
\State \textbf{Initialize:} Choose $(\mathbf{x}_0,\tau_0) \in \Delta \times \Real$, the number of iterations $T>0$ and step sizes $\eta_k > 0$ for $k=0,1,\ldots,T$.
\For{$k=0, 1, \dots, T$}
    \State Find a worst-case probability ${q}^*_k$.
    \State Compute the subgradient components of $J$ at $(\x_k. \tau_k)$: $\mathbf{g}_{k, \mathbf{x}}, g_{k, \tau}$.
    \State Update the variables:
    \begin{align*}
        \mathbf{x}_{k+} &\leftarrow \mathbf{x}_k - \eta_k \mathbf{g}_{k, \mathbf{x}} \\
        \tau_{k+1} &\leftarrow \tau_k - \eta_k g_{k, \tau}
    \end{align*}
    \State Project $\mathbf{x}_{k+1} \leftarrow \text{Proj}_{\Delta}(\mathbf{x}_{k+})$.
\EndFor
\State \textbf{Return} $(\x_1,\tau_1),\ldots,(\mathbf{x}_T,\tau_T)$.

\end{algorithmic}
\end{algorithm}

At each iteration $k$, given $(\mathbf{x}_k, \tau_k)$, we first find the $q_k^*$ achieving the supremum in \eqref{eqn:cvarJ}. By Danskin's Theorem, a subgradient $\mathbf{g}_k \in \partial J(\mathbf{x}_k, \tau_k)$ is given by a subgradient of the inner function evaluated at $q_k^*$. We have the following Proposition for a subgradient of $J$. 

\begin{Proposition}
\label{subgradientCVAR}
 A subgradient, $\mathbf{g}_k$,  of $J$ at $(\x_k. \tau_k)$ has the following components:
\begin{multline*}
   \mathbf{g}_{k, \mathbf{x}} = -(1-q_k^*)\left[\EP{P_N}{\mathbf{R}} + \left(\frac{\rho}{1-p}\right)\EP{P_N}{\mathbf{R} \cdot \mathbf{1}_{-\mathbf{x}'\mathbf{R} > \tau}}\right] \\
   + q_k^* \left[-\left(\EP{P_{S_0}}{\mathbf{R}} + \left(\frac{\rho}{1-p}\right)\EP{P_{S_0}}{\mathbf{R} \cdot \mathbf{1}_{-\mathbf{x}'\mathbf{R} > \tau}}\right) +r(q)(1+\frac{\rho}{1-p})g_{\infty}(\x)\right],
\end{multline*} 
where $g_{\infty}(\x) \in \partial \|\x\|_{\infty}$, and  
\begin{equation*}
g_{k, \tau} = \rho - (1-q_k^*)\left(\frac{\rho}{1-p}\right)P_N(-\mathbf{x}'\mathbf{R} > \tau) - q_k^*\left(\frac{\rho}{1-p}\right)P_{S_0}(-\mathbf{x}'\mathbf{R} > \tau).
\end{equation*}
\qed
\end{Proposition}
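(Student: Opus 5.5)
Following the proof of Proposition \ref{subgradientJ}, I will use Danskin's Theorem (Theorem \ref{danskin}) to reduce the problem to a single fixed $q$. Write
\[
J(\x,\tau)=\rho\tau+\sup_{q\in[q_0\pm\epsilon]} g(q,\x,\tau),
\]
where
\[
g(q,\x,\tau)=(1-q)\EP{P_N}{\ell_{\x,\tau}(\R)}+q\Big[\EP{P_{S_0}}{\ell_{\x,\tau}(\R)}+r(q)\big(1+\tfrac{\rho}{1-p}\big)\|\x\|_\infty\Big].
\]
Theorem \ref{mean_cvar_minimax_reform} gives that $J$ is convex, and the same argument shows that each $g(q,\cdot,\cdot)$ is convex. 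Indeed, $\ell_{\x,\tau}(\R)$ is a maximum of affine functions of $(\x,\tau)$, expectation preserves convexity, and $\|\x\|_\infty$ is convex with coefficient $q\,r(q)(1+\rho/(1-p))\ge 0$. The map $q\mapsto g(q,\x,\tau)$ is continuous on a compact interval, so the supremum is attained at some $q_k^*$. The elementary half of Danskin's theorem then suffices: any $v\in\partial_{(\x,\tau)}g(q_k^*,\x_k,\tau_k)$ satisfies
\[
J(\mathbf{y},s)-\rho s\ \ge\ g(q_k^*,\mathbf{y},s)\ \ge\ g(q_k^*,\x_k,\tau_k)+v'\big((\mathbf{y},s)-(\x_k,\tau_k)\big)=J(\x_k,\tau_k)-\rho\tau_k+v'(\cdots).
\]
Hence $v+(0,\rho)$ is a subgradient of $J$ at $(\x_k,\tau_k)$, and the task reduces to exhibiting one element of $\partial g(q_k^*,\cdot,\cdot)$.

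Next I compute a subgradient of $(\x,\tau)\mapsto\EP{P}{\ell_{\x,\tau}(\R)}$ for $P\in\{P_N,P_{S_0}\}$. Pointwise in $\R$, a subgradient of $\ell_{\x,\tau}(\R)$ is
\[
\Big(-\R-\tfrac{\rho}{1-p}\R\,\mathbf{1}_{-\x'\R>\tau},\ -\tfrac{\rho}{1-p}\mathbf{1}_{-\x'\R>\tau}\Big),
\]
where on the tie set $\{-\x'\R=\tau\}$ I choose the zero subgradient of $\max(\cdot,0)$. This selection is measurable and integrable, because $\EP{P}{\|\R\|}<\infty$. Integrating the pointwise subgradient inequality
\[
\ell_{\mathbf{y},s}(\R)\ge\ell_{\x,\tau}(\R)+w(\R)'\big((\mathbf{y},s)-(\x,\tau)\big)
\]
over $P$ shows that $\EP{P}{w(\R)}$ is a subgradient of the expectation. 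This gives the terms $-\EP{P}{\R}-\tfrac{\rho}{1-p}\EP{P}{\R\,\mathbf{1}_{-\x'\R>\tau}}$ and $-\tfrac{\rho}{1-p}P(-\x'\R>\tau)$.

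Finally, since subgradients of nonnegatively weighted sums of convex functions may be added, I combine the pieces. I weight the $P_N$ piece by $(1-q_k^*)$ and the $P_{S_0}$ piece by $q_k^*$. I add $q_k^* r(q_k^*)(1+\tfrac{\rho}{1-p})\,g_\infty(\x_k)$ with $g_\infty\in\partial\|\x_k\|_\infty$ to the $\x$-component, and $\rho$ to the $\tau$-component. This reproduces the stated $\mathbf{g}_{k,\x}$ and $g_{k,\tau}$, where the $r(q)$ in the statement is read as $r(q_k^*)$.

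The one point needing care is the step from the pointwise subgradient to the subgradient of the expectation. This is legitimate here because only the subgradient inequality is integrated, which requires no interchange of differentiation and integration. If atoms sit on the tie set, the formula is still a valid subgradient, just not necessarily the gradient.
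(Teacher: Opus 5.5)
Your proof is correct and takes the same route the paper indicates: apply Danskin's theorem to reduce to the inner function at the maximizing $q_k^*$, then compute its subgradient term by term (the paper gives no further detail). Your version is more careful in two places. The elementary max-of-convex-functions inequality sidesteps the differentiability hypothesis of Theorem \ref{danskin}(b), which fails here because of $\|\x\|_\infty$, and integrating the pointwise subgradient inequality justifies passing the subgradient inside the expectation, including on the tie set $\{-\x'\mathbf{R}=\tau\}$.
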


Note that in the statement of Proposition \ref{subgradientCVAR}, the set of the subgradients of $\|\x\|_{\infty}$ is denoted by $\partial \|\x\|_{\infty}$, and is given by 
\[
\partial \|\x\|_{\infty}:=\text{conv}(\{\text{sign}(\x_{i})e_i :|\x_i|=\|\x\|_\infty\}),
\]
where $e_i$ is the $i$-th standard basis vector of $\Real^n$ and for a set $A$, $\text{conv}(A)$ denotes its convex hull. The following theorem provides a convergence guarantee for Algorithm \ref{algo2}. 

\begin{Theorem}
   Let $K_6=\EP{P_{N}}{\|\mathbf{R}\|}, K_7=\EP{P_{S_0}}{\|\mathbf{R}\|}, K_8=\max_{q \in [0,1]} r(q)$ and 
   \[
   K_9=\sqrt{\left[\left(1+\frac{\rho}{1-p}\right)\left((K_6 \vee K_7)+K_8\right)\right]^2+\rho^2\left(\frac{p}{1-p}\vee 1\right)^2}.\] Let $R$ be such that $\|(\x_0,\tau_0)-(\x^*,\tau^*)\| \leq R$. If we use constant step size $\eta_k=\frac{R}{K_9 \sqrt{T}}$ in Algorithm \ref{algo2}, we have 
\[J\left(\frac{1}{T}\sum_{i=1}^{T}(\x_i,\tau_i)\right)-J(\x^*,\tau^*) \leq \frac{RK_9}{\sqrt{T}}.\]
   In other words, Algorithm \ref{algo2} has a convergence rate of at least $O(1/\sqrt{T})$.
\end{Theorem}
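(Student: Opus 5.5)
This theorem should follow from the standard convergence guarantee for projected subgradient descent on a convex Lipschitz function. That is the $O(1/\sqrt{T})$ result, Theorem \ref{slowConverge} of the Appendix: for a convex function with subgradients bounded in norm by $L$, with the initial distance to a minimizer at most $R$, constant step size $R/(L\sqrt{T})$ gives $J(\bar{z}_T)-J(z^*)\le RL/\sqrt{T}$ at the averaged iterate. The plan is to check its three hypotheses for $J$ in \eqref{eqn:cvarJ} on the domain $\Delta\times I$, with $L=K_9$.

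\emph{Convexity and existence of a minimizer.} Convexity of $J$ in $(\x,\tau)$ is part of Theorem \ref{mean_cvar_minimax_reform}. The feasible set $\Delta\times I$ is compact and convex, and $J$ is continuous as a supremum of a family that is equicontinuous in $q$. Hence a minimizer $(\x^*,\tau^*)$ exists and $R$ is finite.

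\emph{Projection is nonexpansive.} The update projects only $\x$ onto $\Delta$, so the analysis must be done on $\Delta\times I$. I would either note that $\tau$ must also be clipped to the compact interval $I$, a harmless projection, or argue that the $\tau$-iterates stay bounded. In either case the product projection is nonexpansive, which is what the standard proof uses.

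\emph{Bounding the subgradient.} This is the main step. By Danskin's theorem, the vector in Proposition \ref{subgradientCVAR} is a subgradient, so it suffices to bound its norm by $K_9$.

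For the $\x$-component, write $c:=1+\frac{\rho}{1-p}$. Each expectation term satisfies
\[
\|\mathbb{E}(\R)+\tfrac{\rho}{1-p}\mathbb{E}(\R\,\mathbf{1}_{\{\cdot\}})\|\le c\,\mathbb{E}\|\R\|,
\]
which is at most $c\,(K_6\vee K_7)$. For the norm term, $\|g_\infty(\x)\|\le\|g_\infty(\x)\|_1\le 1$, since it lies in the convex hull of signed unit vectors. So that term contributes at most $r(q)\,c\le c K_8$. Because $q_k^*\in[0,1]$, the convex combination gives
\[
\|\mathbf{g}_{k,\x}\|\le (1-q)cK_6+q\bigl(cK_7+cK_8\bigr)\le c\bigl((K_6\vee K_7)+K_8\bigr).
\]

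For the $\tau$-component, write $g_{k,\tau}=\rho-\frac{\rho}{1-p}\pi$, where $\pi\in[0,1]$ is a mixture of probabilities. Then $g_{k,\tau}\in\bigl[\rho-\frac{\rho}{1-p},\rho\bigr]=\bigl[-\frac{\rho p}{1-p},\rho\bigr]$, so $|g_{k,\tau}|\le\rho\bigl(\frac{p}{1-p}\vee1\bigr)$.

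Combining the two components in the Euclidean norm gives exactly $K_9$. Substituting $L=K_9$ into Theorem \ref{slowConverge} yields the claimed bound.

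The obstacle I expect is that Theorem \ref{slowConverge} requires the subgradient bound along the iterates and a compact feasible set in $\tau$. The bound above holds for every $\tau$, so the only remaining point is justifying the restriction to $I$. I would handle this by projecting $\tau$ onto $I$ as well, or by observing that the minimizer lies in $I$ and only $\|(\x_0,\tau_0)-(\x^*,\tau^*)\|\le R$ enters the telescoping argument.
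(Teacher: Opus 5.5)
Your proposal is correct and follows the paper's proof: you bound the $\x$-component of the subgradient in Proposition \ref{subgradientCVAR} by $\left(1+\frac{\rho}{1-p}\right)\left((K_6\vee K_7)+K_8\right)$ via Jensen's inequality, you bound $|g_{k,\tau}|$ by $\rho\left(\frac{p}{1-p}\vee 1\right)$, you combine the two into $K_9$, and you invoke Theorem \ref{slowConverge}. You also note that Algorithm \ref{algo2} never projects $\tau$ onto $I$, which the paper passes over silently, and your fix is sound: the subgradient bound holds for every $\tau\in\Real$, and the telescoping argument only needs $(\x^*,\tau^*)\in\Delta\times\Real$.
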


\begin{proof}
     \[
     \|\mathbf{g}_{k, \mathbf{x}}\| \leq \left(1+\frac{\rho}{1-p}\right)\left((K_6 \vee K_7)+K_8\right),\, |g_{k, \tau}| \leq \rho \vee \left(\frac{\rho}{1-p}-\rho\right)=\rho\cdot\left(\frac{p}{1-p}\vee 1\right),
     \]
     where we applied Jensen's inequality to the 2-norm. 
     As a result, $J$ is $K_9$-Lipschitz. 
     The proof is concluded by Theorem \ref{slowConverge}.
 
\end{proof}

Finally, Theorem \ref{oneOverD-CVaR} shows that as the ambiguity becomes large, $\x^*$ approaches $(\frac{1}{d},\ldots,\frac{1}{d})$, the equal-weighted portfolio. We omit the proof for its similarity with Theorem \ref{oneOverD} in Section 2.

\begin{Theorem}
\label{oneOverD-CVaR}
Let $(\x_C^*, a_C^*)$ be a minimizer of $J$ with the ambiguity radius in \eqref{ambi_CVaR} defined by $r(q) = C \cdot r_{\text{base}}(q)$, where $C>0$ is a scaling parameter and $r_{\text{base}}$ is a fixed, non-negative continuous function on the interval $[0,1]$,  that is not identically zero on $[q_0-\varepsilon, q_0+\varepsilon]$. Then,  as $C \to \infty$, $\x_C^*$ converges to the equal weighted portfolio $\frac{1}{d} \cdot \mathbf{1}$.
\end{Theorem}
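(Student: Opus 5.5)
We follow the scheme of the proof of Theorem \ref{oneOverD}, with the variance-specific quantities replaced by those of Theorem \ref{mean_cvar_minimax_reform}. The key fact is that on $\Delta$ the norm $\|\x\|_\infty$ is uniquely minimized at $\x^*=\frac1d\mathbf{1}$, with value $1/d$. Indeed, $\|\x\|_\infty\ge \frac1d\mathbf{1}'\x=1/d$, and equality forces every coordinate to equal $1/d$. So the plan has three steps: show that the dominant term of $J$ is a multiple of $C\|\x\|_\infty$, bound everything else uniformly, and then quantify how $\|\x\|_\infty$ grows away from $\x^*$.

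\textbf{Step 1: uniform bound on the remainder.} Write
\[
J(\x,\tau)=\rho\tau+\sup_{q}\Big(A(q,\x,\tau)+C\,q\,r_{\text{base}}(q)\,c_\rho\|\x\|_\infty\Big),\qquad c_\rho:=1+\tfrac{\rho}{1-p},
\]
where
\[
A(q,\x,\tau):=(1-q)\EP{P_N}{\ell_{\x,\tau}(\R)}+q\,\EP{P_{S_0}}{\ell_{\x,\tau}(\R)}.
\]
For $\x\in\Delta$ we have $|\ell_{\x,\tau}(\R)|\le c_\rho\|\R\|+\frac{\rho}{1-p}|\tau|$. Since $\tau$ ranges over the compact interval $I$ and the first moments are finite, there is a constant $K$, independent of $C$, with $|A|+\rho|\tau|\le K$ on $[q_0\pm\epsilon]\times\Delta\times I$.

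The point to check here is that $I$ can be taken independent of $C$. In the Rockafellar--Uryasev variational form the minimizing $\tau$ is a quantile of $-\x'\R$, and quantiles of distributions in a $W_1$ ball of radius $Cr_{\text{base}}$ are not uniformly bounded as $C\to\infty$. I would handle this directly from the reformulated $J$: the map $\tau\mapsto \rho\tau+\frac{\rho}{1-p}\,\mathbb{E}\max(-\x'\R-\tau,0)$ involves only $P_N$ and $P_{S_0}$, not the radius. Its minimizer over $\tau$ is therefore a quantile of a mixture of $P_N$ and $P_{S_0}$ pushed forward by $-\x'\R$. These quantiles are bounded uniformly in $\x\in\Delta$ and $q$ by Markov's inequality applied to $\|\R\|$. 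So a compact $I$ independent of $C$ works.

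\textbf{Step 2: squeeze $J$.} Let $K_8:=\max_{q\in[q_0\pm\epsilon]} q\,r_{\text{base}}(q)>0$, and set $\Phi(\x):=K_8c_\rho\|\x\|_\infty$. Then uniformly on $\Delta\times I$,
\[
\big|J(\x,\tau)-C\,\Phi(\x)\big|\le 2K.
\]
The upper bound comes from bounding each term of the supremum separately. The lower bound comes from evaluating the supremum at the maximizer of $q\,r_{\text{base}}(q)$. Note this is simpler than in Theorem \ref{oneOverD}: no restriction to a set $I_q$ is needed, because $\|\x\|_\infty$ multiplies $q\,r_{\text{base}}(q)$ linearly, so the supremum over $q$ of the dominant term is exactly $C\,\Phi(\x)$.

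\textbf{Step 3: compare the minimizer with $\x^*$.} Optimality of $(\x_C^*,\tau_C^*)$ gives $J(\x_C^*,\tau_C^*)\le J(\x^*,\tau)$ for any $\tau\in I$. Combining this with Step 2 yields
\[
\|\x_C^*\|_\infty-\tfrac1d\le \frac{4K}{CK_8c_\rho}.
\]
To turn this into a distance bound, write $\x=\x^*+\boldsymbol\eta$ with $\mathbf{1}'\boldsymbol\eta=0$. Then
\[
\|\x\|_\infty-\tfrac1d=\max_i\eta_i\ge\frac{\|\boldsymbol\eta\|_1}{2d},
\]
because the positive parts of $\boldsymbol\eta$ sum to $\|\boldsymbol\eta\|_1/2$ and are spread over at most $d$ coordinates. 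Hence
\[
\|\x_C^*-\x^*\|_1\le \frac{8dK}{CK_8c_\rho}\to0.
\]
This growth is linear in $\|\boldsymbol\eta\|$, rather than quadratic as in Section 2, which gives an $O(1/C)$ rate instead of $O(1/\sqrt C)$.

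The main obstacle is Step 1, specifically the independence of $I$ from $C$. If that argument fails, I would work with $\min_\tau J$ directly and use the bound $|\min_\tau(\cdot)|\le K'$ obtained from the moment bounds under $P_N$ and $P_{S_0}$. Steps 2 and 3 only need the remainder to be uniformly bounded after optimizing over $\tau$, so this alternative would suffice.
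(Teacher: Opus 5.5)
Your proposal is correct and is essentially the argument the paper intends when it omits this proof as similar to that of Theorem~\ref{oneOverD}: isolate the dominant term $C\,q\,r_{\text{base}}(q)\bigl(1+\frac{\rho}{1-p}\bigr)\|\x\|_\infty$, bound the remainder uniformly, and use that $\frac1d\mathbf{1}$ uniquely minimizes $\|\x\|_\infty$ on $\Delta$, with your linear estimate $\max_i\eta_i\ge\|\boldsymbol\eta\|_1/(2d)$ in place of the identity $\|\frac1d\mathbf{1}+\boldsymbol\eta\|^2=\frac1d+\|\boldsymbol\eta\|^2$ used there. Your concern about $I$ is well founded, since the paper's bound on $I$ involves $M=\max r$, which grows with $C$, and the proof of Theorem~\ref{oneOverD} implicitly treats $I$ as fixed; your fallback is the cleanest fix, because $\frac{\rho}{1-p}\mathbb{E}(L-\tau)^+\ge\rho\,\mathbb{E}(L-\tau)$ with $L=-\x'\R$ gives $\rho\tau+\mathbb{E}_{P_q}[\ell_{\x,\tau}(\R)]\ge(1+\rho)\,\mathbb{E}_{P_q}[L]$ for every $\tau\in\Real$, so the lower bound in Step~2 needs no restriction on $\tau$, whereas your primary justification (that the minimizing $\tau$ is a quantile of a mixture) skips a step because $J$ takes a supremum over $q$, though it is repaired by noting that each $q$-term is monotone in $\tau$ outside a Markov-bounded interval.
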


\subsection{Proofs of Results}
To prove Theorem \ref{mean_cvar_minimax_reform}, we need the following two lemmas. 

\begin{lemma}
The mean-CVaR problem \eqref{mean-cvar} has the following equivalent characterization:    
\begin{multline}
\label{exchangeCVAR}
     \inf_{\x \in \Delta} \sup_{P \in \calS} \left( \EP{P}{-\x'\R} + \rho \CVaR{-\x'\R} \right) \\=   \inf_{\mathbf{x} \in \Delta, \tau \in I} \left\{ \rho\tau + \sup_{P \in \calS} \EP{P}{-\x'\R + \frac{\rho}{1-p}\max(-\x'\R - \tau, 0)} \right\},
\end{multline}
where $I$ is a compact interval independent of $\x$ and $P$.
\end{lemma}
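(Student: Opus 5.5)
We plan to mirror the proof of Lemma \ref{alternateFormOfOpt1}, with the Rockafellar--Uryasev variational formula \eqref{TVaRdefn} playing the role that the variational form of the variance played there. For fixed $\x\in\Delta$ and $P\in\calS$, substituting \eqref{TVaRdefn} into the objective and using linearity of expectation gives
\[
\EP{P}{-\x'\R}+\rho\,\CVaR{-\x'\R}=\inf_{\tau\in\Real}\Big\{\rho\tau+\EP{P}{-\x'\R+\tfrac{\rho}{1-p}\max(-\x'\R-\tau,0)}\Big\}.
\]
The left side of \eqref{exchangeCVAR} is therefore $\inf_{\x}\sup_P\inf_\tau$ of the function $\Phi(\x,P,\tau):=\rho\tau+\EP{P}{\ell_{\x,\tau}(\R)}$. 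Two things remain: restrict $\tau$ to a compact interval $I$ that does not depend on $\x$ or $P$, and then interchange $\sup_P$ and $\inf_\tau$. Once that is done, the outer $\inf_\x$ merges with $\inf_\tau$ to give the right side.

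\textbf{Step 1: construct $I$.} The infimum in \eqref{TVaRdefn} is attained on the $p$-quantile set of $X=-\x'\R$, so it suffices to bound the $p$-quantiles of $-\x'\R$ uniformly over $\x\in\Delta$ and $P\in\calS$. For $\x\in\Delta$ we have $|\x'\R|\le\|\x\|_\infty\|\R\|_1\le\|\R\|_1$, up to a norm constant if the $W_1$ ground norm is the 2-norm.

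Let $M=\max_{[q_0\pm\epsilon]}r$. For $P_S\in\mathbb{B}_{W_1}(P_{S_0},M)$ we use the optimal coupling, as in Lemma \ref{alternateFormOfOpt1}, to get $\EP{P_S}{\|\R\|}\le\EP{P_{S_0}}{\|\R\|}+M$. Hence every $P\in\calS$ satisfies $\EP{P}{|\x'\R|}\le B:=c\big(\EP{P_N}{\|\R\|}\vee\EP{P_{S_0}}{\|\R\|}+M\big)$.

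Markov's inequality then gives $P(|\x'\R|>t)\le B/t$. Taking $t>B/\min(p,1-p)$ places every $p$-quantile in $[-t,t]$, and we set $I:=[-t,t]$.

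\textbf{Step 2: swap sup and inf.} We apply Sion's theorem (Theorem \ref{sion}) on $\calS\times I$, with $I$ compact. The map $\tau\mapsto\Phi$ is convex and continuous. The map $P\mapsto\Phi$ is linear, hence concave; $\calS$ is convex along each mixture (for fixed $q$) because $W_1$ balls are convex.

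We expect this step to be the main obstacle. In the full set $\calS$, varying $q$ also varies the radius $r(q)$, so $\calS$ need not be convex as a set of measures. We will handle this as in Lemma \ref{alternateFormOfOpt1}: either accept that treatment, or parametrize by $(q,P_S)$ and, for each fixed $q$, apply Sion over the convex set $\mathbb{B}_{W_1}(P_{S_0},r(q))$, then take $\sup_q$.

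Continuity in $P$ under $W_1$ follows because $\ell_{\x,\tau}$ is Lipschitz in $\R$ with constant at most $(1+\frac{\rho}{1-p})\|\x\|$, so its expectation is $W_1$-continuous by Kantorovich--Rubinstein duality. Compactness is required only on the $\tau$ side, so Sion applies and yields
\[
\sup_{P\in\calS}\min_{\tau\in I}\Phi=\min_{\tau\in I}\sup_{P\in\calS}\Phi.
\]
Taking $\inf_{\x\in\Delta}$ on both sides and combining the two infima gives \eqref{exchangeCVAR}.
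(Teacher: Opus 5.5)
\textbf{There is a genuine gap in Step 2.} Your Step 1 and the setup of Step 2 match the paper's argument: Rockafellar--Uryasev, a Markov bound placing every $p$-quantile in a fixed $[-t,t]$, then Sion. You correctly identified Step 2 as the weak point, but neither of your two options closes it, and it cannot be closed without an extra hypothesis.

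Sion requires the set over which the supremum is taken to be convex, and $\calS$ is not convex in general. Mixing $(1-q_i)P_N+q_iP^i$, $i=1,2$, with weights $\lambda,1-\lambda$ gives mixture weight $\bar q=\lambda q_1+(1-\lambda)q_2$. The resulting stress component can lie at $W_1$-distance up to $\big(\lambda q_1r(q_1)+(1-\lambda)q_2r(q_2)\big)/\bar q$ from $P_{S_0}$. This exceeds $r(\bar q)$ whenever $q\mapsto q\,r(q)$ fails to be concave. Linearity of $\Phi$ in $P$ lets you replace $\calS$ by its convex hull inside $\min_\tau\sup_P$, but not inside $\sup_P\min_\tau$, because $P\mapsto\min_\tau\Phi$ is concave.

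Your fallback (Sion for each fixed $q$ over the convex ball, then $\sup_q$) is valid as far as it goes. By the $W_1$ lemma, however, it only yields LHS $=\sup_q\min_{\tau\in I}G(q,\tau)$, where
\[
G(q,\tau)=\rho\tau+(1-q)\EP{P_N}{\ell_{\x,\tau}(\R)}+q\,\EP{P_{S_0}}{\ell_{\x,\tau}(\R)}+q\,r(q)\Big(1+\tfrac{\rho}{1-p}\Big)\|\x\|_\infty .
\]
The right side is $\min_{\tau\in I}\sup_q G(q,\tau)$. Exchanging these requires quasi-concavity of $G$ in $q$, which fails when $q\,r(q)$ is convex.

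This is a real failure, not a technicality. Take the following instance:
\begin{itemize}
\item $d=1$, $P_N=\delta_0$, $P_{S_0}=\delta_{-1}$;
\item $p=0.95$ and $\rho=1$, so $\frac{\rho}{1-p}=20$;
\item $[q_0-\epsilon,q_0+\epsilon]=[0.03,0.07]$;
\item $r$ with $21\,q\,r(q)=275\,(0.07-q)^2+\delta$ for a small $\delta>0$.
\end{itemize}
For $\tau\in[0,1]$ you get $G(q,\tau)=\tau+20q(1-\tau)+q+275(0.07-q)^2+\delta$, which is convex in $q$. Hence $\sup_qG(q,\tau)=\max(1.07+0.4\tau,\,1.47-0.4\tau)+\delta$, and the right side equals $1.27+\delta$, attained at $\tau=\frac12$. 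Meanwhile $\min_\tau G(q,\tau)=\min(20q,1)+q+275(0.07-q)^2+\delta$ has maximum $1.16+\delta$ at $q=0.05$, and this is the left side. So in general only the weak-duality inequality LHS $\le$ RHS holds.

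The paper's proof applies Sion directly on $\calS$ without checking convexity, so it has the same defect; ``accepting that treatment'' does not help. A sufficient repair is to assume $q\mapsto q\,r(q)$ is concave on $[q_0-\epsilon,q_0+\epsilon]$. Then $\calS$ is convex by joint convexity of $W_1$ under mixtures, and your Sion step goes through. Equivalently, $G$ is then concave in $q$, and a second application of Sion on $[q_0-\epsilon,q_0+\epsilon]\times I$ completes your fallback route.
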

\begin{proof}
From the definition \eqref{TVaRdefn}:
\[
      \sup_{P \in \calS} \left( \EP{P}{-\x'\R} + \rho \CVaR{-\x'\R} \right) =   \sup_{P \in \calS} \inf_{\tau \in  \Real} \left\{ \rho\tau + \EP{P}{-\x'\R + \frac{\rho}{1-p}\max(-\x'\R - \tau, 0)} \right\},
\]
Denote the loss random variable $L:=-\x'\R$. It is well known that for $p \in (0,1)$, the set of $\tau$  achieving the infimum equals $[\vec{F}_L(1-p),\cev{F}_L(1-p)]$, where 
\[
\cev{F}_L(1-p):=\sup\{x:F_L(x) \leq 1-p\},\, \hbox{and }\, \vec{F}_L(1-p) := \inf\{x:F_L(x) \geq 1-p\},
\]
are the right and left generalized inverses of $F_L$, the cdf of $L$, which depends on $\x$ and $P$.
The right and left generalized inverses are non-decreasing and right-continuous and left-continuous, respectively. 
 We first show that the inner optimization can be restricted to a compact interval $I$ free of $P\in \calS$ and $\mathbf{x}\in\Delta$.
It suffices to show an $I$ containing the $[\vec{F}_L(1-p),\cev{F}_L(1-p)]$ for all $P\in \calS$ and $\mathbf{x}\in\Delta$.  As f is continuous, let $M$ be the maximum of $f$ on $[q_0-\varepsilon,q_0+\varepsilon]$. Then for any $P_{S}$ in $\mathbb{B}_{W_1}(P_{S_0}, M)$ and $\mathbf{x}\in\Delta$, if we denote $\pi$ to be the optimal 1-Wasserstein coupling between $P_S$ and $P_{S_0}$ and $(\R,\R_0) \sim \pi$, we have
\begin{multline*}
    \EP{P_{S}}{\|\R\|_1} = \EP{\pi}{\|\R_0+\R-\R_0\|_1} \leq \EP{\pi}{\|\R_0\|_1}+\EP{\pi}{\|\R-\R_0\|_1}
    \leq \EP{P_{S_0}}{\|\R\|_1}+M,
\end{multline*}

and 
\[
\EP{P_{S}}{|\x'\R|} \leq \|\x\|_\infty\,\EP{P_{S}}{\|\R\|_1} \leq \EP{P_{S_0}}{\|\R\|_1}+M. 
\]
Similarly, 
$$
\EP{P_{N}}{|\x'\R|} \leq \|\x\|_\infty \,\EP{P_{N}}{\|\R\|_1} \leq \EP{P_{N}}{\|\R\|_1}.
$$ 
Since,  any $P$ in $\calS$ is a mixture of $P_{N}$ and a $P_{S}$ in $\mathbb{B}_{W_1}(P_{S_0}, M)$, with mixture weight $q$ in $[q_0-\varepsilon,q_0+\varepsilon]$, combining the above statements, we have,
$$
\EP{P}{\left\vert\x'\R\right\vert} = (1-q)\EP{P_{N}}{|\x'\R|}+q\EP{P_{S}}{|\x'\R|} \leq (\EP{P_{S_0}}{\|\R\|_1}+M)\vee 
 \EP{P_{N}}{\|\R\|_1}.
$$

For any $t \in [\vec{F}_L(1-p),\cev{F}_L(1-p)]$, 
\[
|t| \leq \left(\frac{1}{1-p} \vee \frac{1}{p}\right) \cdot \left[(\EP{P_{S_0}}{\|\R\|_1}+M)\vee 
 \EP{P_{N}}{\|\R\|_1}\right].
 \]
The above shows the existence of our desired $I$.

Since $I$ is compact, $\rho\tau + \EP{P}{-\x'\R + \frac{\rho}{1-p}\max(-\x'\R - \tau, 0)}$ is convex and continuous in $\tau$, and concave (linear) and continuous in $P$ with respect to the 1-Wasserstein metric topology (see Theorem 7.12 in \citet{villani2003topics} for implications of $W_1$ convergence), from Sion's Minimax Theorem (see Theorem \ref{sion} in the Appendix), we can swap the min and sup operators, and conclude the proof.
\end{proof}
The core of the problem is to solve the inner supremum $\sup_{P \in \calS} \EP{P}{\ell_{\x,\tau}(\mathbf{R})}$.

\begin{lemma}
    $\ell_{\x,\tau}$ is Lipschitz with respect to $L_1$ norm, and has Lipschitz constant $$\left(1 + \frac{\rho}{1-p}\right) \|\mathbf{x}\|_\infty$$
\end{lemma}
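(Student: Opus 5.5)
The plan is to write $\ell_{\x,\tau}$ as a composition, $\ell_{\x,\tau}(\R)=\phi(\x'\R)$ with
\[
\phi(y):=-y+\frac{\rho}{1-p}\max(-y-\tau,0),
\]
bound the Lipschitz constant of the scalar function $\phi$, and then bound the Lipschitz constant of the linear map $\R\mapsto \x'\R$ with respect to $\|\cdot\|_1$. The constant for $\ell_{\x,\tau}$ is then the product of the two.

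For the scalar part, $\phi$ is piecewise linear with a single kink at $y=-\tau$. Its slopes are $-1$ for $y>-\tau$ and $-1-\frac{\rho}{1-p}$ for $y<-\tau$. An equivalent route is the triangle inequality: $y\mapsto -y$ is $1$-Lipschitz, and $y\mapsto\max(-y-\tau,0)$ is $1$-Lipschitz because $u\mapsto\max(u,0)$ is $1$-Lipschitz. Either way,
\[
|\phi(y_1)-\phi(y_2)|\le \left(1+\frac{\rho}{1-p}\right)|y_1-y_2|.
\]

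For the linear part, Hölder's inequality (the $\ell_1$--$\ell_\infty$ duality) gives, for any $\R_1,\R_2\in\Real^d$,
\[
|\x'\R_1-\x'\R_2| = |\x'(\R_1-\R_2)|\le \|\x\|_\infty\,\|\R_1-\R_2\|_1 .
\]
This is the only step that needs care: the correct dual norm must be used, since the Wasserstein-1 ball in \eqref{ambi_CVaR} is taken with respect to the $L_1$ ground metric. That choice is exactly why $\|\x\|_\infty$ appears in $J$ in Theorem \ref{mean_cvar_minimax_reform}.

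Combining the two bounds gives
\[
|\ell_{\x,\tau}(\R_1)-\ell_{\x,\tau}(\R_2)|\le \left(1+\frac{\rho}{1-p}\right)\|\x\|_\infty\|\R_1-\R_2\|_1 .
\]
I do not expect a real obstacle here; the argument is elementary.

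The constant is not needed to be sharp for how the lemma is used later in the Wasserstein-1 duality step. Still, one could add a remark that it is attained. Take $\R_1-\R_2=t\,e_i$ with $i$ an index where $|x_i|=\|\x\|_\infty$, and choose both points in the region where $-\x'\R>\tau$; then the ratio equals the stated constant exactly.
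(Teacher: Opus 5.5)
Your proof is correct and is essentially the paper's argument: the paper computes the piecewise gradient $\nabla_{\R}\ell_{\x,\tau}\in\{-\x,\,-(1+\frac{\rho}{1-p})\x\}$ and takes its largest $\ell_\infty$ (dual) norm, which is the differential form of your slope bound on $\phi$ combined with H\"older. Your version has the small advantage of handling the kink at $-\x'\R=\tau$ explicitly.

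One correction to your closing aside: sharpness \emph{is} needed downstream. The equality $V(q,\tau,\x)=\EP{P_{S_0}}{\ell_{\x,\tau}(\R)}+r(q)\,\text{Lip}(\ell_{\x,\tau})$ in the next lemma requires $\sup_{\r}\{\ell_{\x,\tau}(\r)-\lambda\|\r-\R\|_1\}=+\infty$ for every $\lambda<(1+\frac{\rho}{1-p})\|\x\|_\infty$, which is exactly your $e_i$ construction run along the ray $\r=\R-te_i$, $t\to\infty$. With an overestimated constant you would only recover the upper bound stated in the paper's Remark.
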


\begin{proof}
The dual norm of $L_1$-norm is the $L_\infty$-norm. The Lipschitz constant is the maximum dual norm of the gradient of the loss function.  The gradient of the loss function with respect to the vector $\mathbf{R}$ is:
\begin{equation}
\nabla_{\mathbf{R}} \ell_{\x,\tau}(\mathbf{R}) =
\begin{cases}
    -\mathbf{x} & \text{if } -\mathbf{x}'\mathbf{R} - \tau < 0 \\
    -\left(1 + \frac{\rho}{1-p}\right)\mathbf{x} & \text{if } -\mathbf{x}'\mathbf{R} - \tau > 0
\end{cases}
\end{equation}
The Lipschitz constant is the maximum $L_\infty$-norm of this gradient:
\begin{align*}
    \text{Lip}(\ell_{\x,\tau}) &= \max \left( \|-\mathbf{x}\|_\infty, \left\|-\left(1 + \frac{\rho}{1-p}\right)\mathbf{x}\right\|_\infty \right) = \left(1 + \frac{\rho}{1-p}\right) \|\mathbf{x}\|_\infty.
\end{align*}

\end{proof}

\begin{lemma}
For a fixed $\mathbf{x}, \tau, q$, the worst-case expectation of the loss over the Wasserstein-1 ball:
    \begin{equation}
    \label{worstW1}
    V(q, \tau, \mathbf{x}) := \sup_{P_S \in \mathbb{B}_{W_1}(P_{S_0}, r(q))} \EP{P_{S}}{\ell_{\x,\tau}(\mathbf{R})}=\EP{P_{S_0}}{\ell_{\x,\tau}(\mathbf{R})} + r(q) \cdot \text{Lip}(\ell_{\x,\tau}).
    \end{equation}
\end{lemma}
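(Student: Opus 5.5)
We prove the identity \eqref{worstW1} by establishing matching upper and lower bounds. Throughout, the Wasserstein-1 distance is taken with respect to the $L_1$ norm on $\Real^d$. The preceding lemma gives that $\ell_{\x,\tau}$ is Lipschitz with respect to $\|\cdot\|_1$, with constant $L:=\left(1+\frac{\rho}{1-p}\right)\|\x\|_\infty$.

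\textbf{Upper bound.} Take any $P_S\in\mathbb{B}_{W_1}(P_{S_0},r(q))$ and let $\pi$ be an optimal coupling with $(\R,\R_0)\sim\pi$. Then
\[
\EP{P_S}{\ell_{\x,\tau}(\R)}-\EP{P_{S_0}}{\ell_{\x,\tau}(\R_0)}=\EP{\pi}{\ell_{\x,\tau}(\R)-\ell_{\x,\tau}(\R_0)}\le L\,\EP{\pi}{\|\R-\R_0\|_1}\le L\,r(q).
\]
This is simply the Kantorovich--Rubinstein inequality, and no duality theorem is needed for it.

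\textbf{Lower bound.} We construct a transport that attains the bound, which amounts to shifting $P_{S_0}$ in a well-chosen direction. Let $i$ be an index with $x_i=\|\x\|_\infty$; since $\x\in\Delta$, we have $x_i>0$. For $t\ge 0$ define $P^t$ as the law of $\R_0-t e_i$, where $\R_0\sim P_{S_0}$. Its distance from $P_{S_0}$ satisfies $W_1(P^t,P_{S_0})\le t$, using the coupling $(\R_0-te_i,\R_0)$.

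Along this direction $-\x'\R$ increases by exactly $t x_i$. Consequently $-\x'\R$ increases by $t\|\x\|_\infty$, and the hinge term $\max(-\x'\R-\tau,0)$ increases by at most the same amount. The increase is exactly $t\|\x\|_\infty$ on the event $\{-\x'\R_0-\tau\ge 0\}$. This event may not have full probability, so shifting alone only gives
\[
\EP{P^t}{\ell}\ge\EP{P_{S_0}}{\ell}+t\|\x\|_\infty\left(1+\tfrac{\rho}{1-p}P_{S_0}(-\x'\R_0>\tau)\right).
\]

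To recover the full constant, we instead use the standard ``move a small mass far'' construction:
\[
P^{\delta}=(1-\delta)P_{S_0}+\delta\,\mathcal{L}\!\left(\R_0-\tfrac{r}{\delta}e_i\right),
\]
whose transport cost is at most $\delta\cdot r/\delta=r$. For the moved mass, once $r/\delta$ is large enough, $-\x'\R_0-\tau+rx_i/\delta>0$ except on an event whose probability tends to $0$ as $\delta\to0$. On the complementary event the loss increases by exactly $L r/\delta$. Multiplying by the weight $\delta$, the gain tends to $Lr$ as $\delta\to0$, by dominated convergence together with the finite first moment of $P_{S_0}$. Hence the supremum is at least $\EP{P_{S_0}}{\ell}+r(q)L$.

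The two bounds together give \eqref{worstW1}; the supremum need not be attained.

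\textbf{Main obstacle.} The delicate step is this limiting argument, because the hinge may be inactive on part of the support. We handle it by splitting the expectation on the event $\{\x'\R_0\le rx_i/\delta-\tau\}$, whose probability tends to one as $\delta\to0$. On the complement, the loss difference for the moved mass is bounded by $L r/\delta$ in absolute value, so after multiplying by $\delta$ its contribution is at most $Lr$ times a vanishing probability. As a cross-check, the result also agrees with Theorem~1 of \citet{gao2023distributionally} (Theorem \ref{duality}): for a Lipschitz loss whose growth rate at infinity equals its Lipschitz constant, the dual infimum is attained at $\lambda=L$.
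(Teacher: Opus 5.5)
Your argument is correct in substance, but it takes a different route from the paper. The paper applies the strong duality theorem (Theorem \ref{duality}) with $p=1$. It observes that $\sup_{\r}\{\ell_{\x,\tau}(\r)-\lambda\|\r-\R\|_1\}$ equals $\ell_{\x,\tau}(\R)$ when $\lambda\ge L:=\text{Lip}(\ell_{\x,\tau})$ and is $+\infty$ otherwise, so the dual infimum sits at $\lambda=L$; this is exactly your closing cross-check.

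You instead work entirely in the primal. Your upper bound is the coupling form of the Kantorovich--Rubinstein inequality, which the paper only records afterwards in its Remark. Your lower bound is an explicit maximizing sequence that moves mass $\delta$ a distance $r/\delta$ along $-e_i$, where $x_i=\|\x\|_\infty$. The two routes are dual to each other: your shifted mass is precisely the direction along which the inner supremum blows up for $\lambda<L$, a fact the paper states without checking. The paper's route is shorter and reuses the template of the $W_2$ case in Section 2, but it leans on a heavy duality theorem stated for closed balls. Yours is self-contained, uses only the finite first moment of $P_{S_0}$, and shows that the supremum is approached but not attained.

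There are two small corrections.
\begin{enumerate}
\item Your claim that the loss increases by exactly $Lr/\delta$ wherever the shifted hinge is active is false. On $\{-\x'\R_0-\tau<0\le -\x'\R_0-\tau+rx_i/\delta\}$ the increase is $Lr/\delta-\frac{\rho}{1-p}(\x'\R_0+\tau)$. The deficit $\frac{\rho}{1-p}(\x'\R_0+\tau)^+$ does not depend on $\delta$. After multiplying by $\delta$ it costs at most $\delta\frac{\rho}{1-p}\EP{P_{S_0}}{(\x'\R+\tau)^+}\to 0$. This is where the finite first moment is genuinely needed, and your conclusion survives.
\item The paper defines the ball with strict inequality, $W_1<r$, whereas your $P^\delta$ has $W_1(P^\delta,P_{S_0})=r$ exactly (test against $\mathbf{y}\mapsto -y_i$). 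Run the construction at a radius $r'<r(q)$ and let $r'\uparrow r(q)$.
\end{enumerate}
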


\begin{proof}
    From \eqref{duality}, \begin{equation*}
    V(q, \tau, \mathbf{x}) = \inf_{\lambda \ge 0} \left\{ \lambda r(q) + \EP{P_{S_0}}{\sup_{\r} \left( \ell_{\x,\tau}(\r) - \lambda \|\r - \mathbf{R}\|_1 \right)} \right\},
\end{equation*}
 where $\R \sim P_{S_0}$. Note that $\sup_{\r} \{ \ell_{\x,\tau}(\r) - \lambda \|\mathbf{r} - \mathbf{R}\|_1 \} = \ell_{\x,\tau}(\mathbf{R})$ when $\lambda \geq \text{Lip}(\ell_{\x,\tau})$, otherwise the $\sup$ is infinity. Therefore, the dual problem becomes:
\begin{equation*}
    V(q, \tau, \mathbf{x}) = \inf_{\lambda \ge \text{Lip}(\ell_{\x,\tau})} \left\{ \lambda r(q) + \EP{P_{S_0}}{\ell_{\x,\tau}(\mathbf{R})} \right\}=\EP{P_{S_0}}{\ell_{\x,\tau}(\mathbf{R})} + r(q) \cdot \text{Lip}(\ell_{\x,\tau}).
\end{equation*}

\end{proof}
\begin{Remark}
    The above lemma is not surprising, as by the Kantorovich-Rubinstein duality for the Wasserstein-1 distance, 
    $$
    d_{W_1}(P_{S_0}, P_S) \cdot \text{Lip}(\ell_{\x,\tau}) \ge \EP{P_{S}}{\ell_{\x,\tau}(\mathbf{R})}-\EP{P_{S_0}}{\ell_{\x,\tau}(\mathbf{R})}
    $$. 
    
    As a result,
\begin{equation*}
    V(q, \tau, \mathbf{x}) \leq \EP{P_{S_0}}{\ell_{\x,\tau}(\mathbf{R})} + r(q) \cdot \text{Lip}(\ell_{\x,\tau}).
\end{equation*}

\end{Remark}

Now we return to the proof of Theorem \ref{mean_cvar_minimax_reform}:

\begin{proof}

From \eqref{exchangeCVAR} and \eqref{worstW1}
    \begin{align*}
\label{exchangeCVAR}
     & \inf_{\x \in \Delta} \sup_{P \in \calS} \left( \EP{P}{-\x'\R} + \rho \CVaR{-\x'\R} \right) \\
     &= \inf_{\mathbf{x} \in \Delta, \tau \in I} \left\{ \rho\tau + \sup_{P \in \calS} \EP{P}{-\x'\R + \frac{\rho}{1-p}\max(-\x'\R - \tau, 0)} \right\}\\
     &=    \inf_{\mathbf{x} \in \Delta, \tau \in I} \left\{ \rho\tau + \sup_{q \in [q_0\pm\epsilon]} \left( (1-q)\EP{P_{N}}{\ell_{\x,\tau}(\mathbf{R})} + q\left[\EP{P_{S_0}}{\ell_{\x,\tau}(\mathbf{R})} + r(q)(1+\frac{\rho}{1-p})\|\mathbf{x}\|_\infty\right] \right) \right\}\\
     &=  \inf_{\mathbf{x} \in \Delta, \tau \in I} J(\x,\tau).
\end{align*}
Since $\ell_{\x,\tau}(\mathbf{R}) = -\x'\R + \frac{\rho}{1-p}\max(-\x'\R - \tau, 0)$ is convex in $(\mathbf{x}, \tau)$, the convexity of J comes from the fact that any norm is convex and the supremum of convex function is convex.

By Berge's maximum theorem, $J$ is continuous since $[q_0-\varepsilon, q_0+\varepsilon] $ is a compact set, and the objective function is continuous in $(q,\x,\tau)$. We can change the $\inf$ to $\min$ since $J$ is continuous on a compact set.

\end{proof}

\begin{Remark}
    An easier way to see the continuity of $J$ is the following argument. Note that $J$ is convex in an open ball containing $\Delta \times I$. By Theorem 10.1 of \citet{rockafellar1970convex} (a convex function on $\Real^n$ is continuous relative to the relative interior of its domain), $J$ is continuous on $\Delta \times I$.
\end{Remark}

\section{Simulation Experiments}

We begin by describing the setup for generating our simulated data. We consider 1,000 i.i.d. samples of returns for 10 assets ($d=10$) and set the true probability of the stress regime to 3\% ($q=3\%$).  In the normal regime, the return of the $i$-th asset is decomposable into an idiosyncratic component $N(\mu=0.03i,\sigma=0.025i)$ and a systematic component $N(\mu=0,\sigma=0.02)$ common to all assets, which is the simulation setup in section 7.2 of \citet{mohajerin2018data}. In other words, the returns of the assets under $P_N$ follow a multivariate $N_d(\boldsymbol{\mu},\Sigma)$, where with $\delta_{\cdot}$ denoting the Kronecker delta function, 
\[
\mu_i=0.03i, \hbox{ and }\sigma_{ij}=0.02^2+0.025^2i^2\delta_{i,j}.
\]
In the stress regime, the returns of the assets follow a heavy-tailed multivariate $t$ distribution with five degrees of freedom, $t_{d}(\boldsymbol{\mu},\Sigma,\nu=5)$, where 
\[
\mu_i=-0.05(i+1), \hbox{ and }\sigma_{ij}=(0.1+0.03i)\cdot (0.1+0.03j) \cdot \left(0.7 +0.3\delta_{ij}\right),
\]
resulting in a high base correlation of 0.7 between assets, and a standard deviation increasing linearly with the index. It also captures the stylized fact that assets with higher returns and variance in the normal regime (e.g., smaller companies) tend to react more severely in the stress regime. Another important aspect of our setup is that the assets are tail-independent in the normal regime but tail-dependent in the stress regime, with a tail dependence coefficient of 0.43. Figure \ref{fig:visual} is a visualization of these distributions via a scatter plot of 1,000 samples of Asset 1 and Asset 2 returns.
   \begin{figure}[htbp]
        \centering %
        \includegraphics[width=0.7\textwidth]{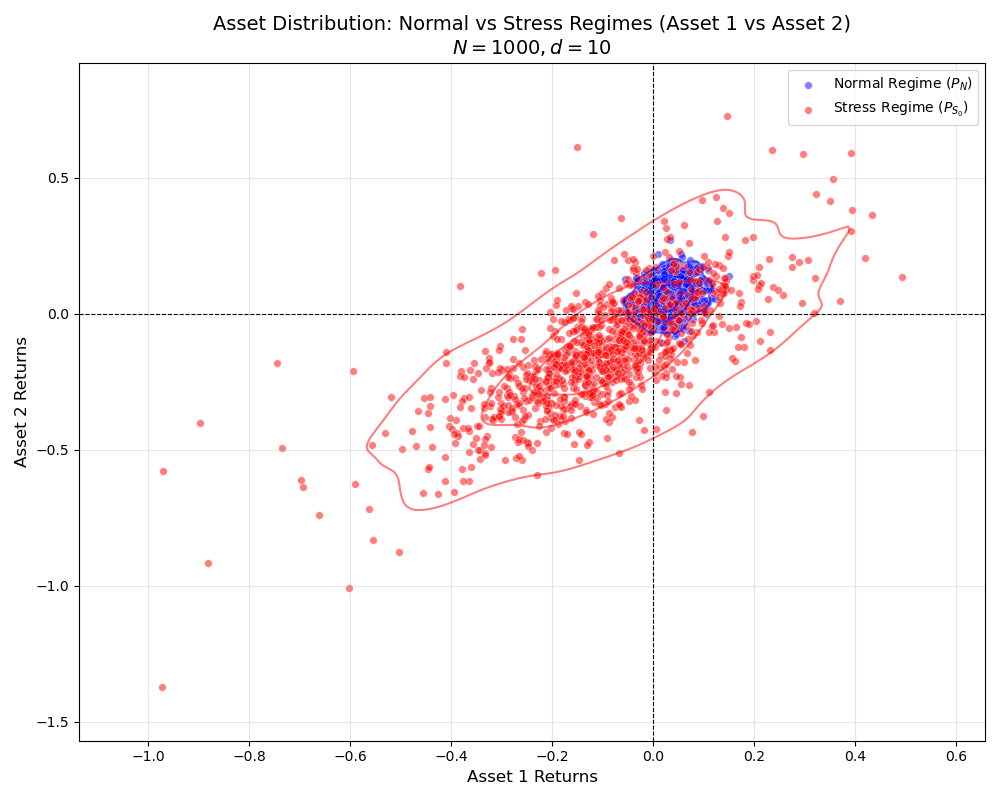} %
        \caption{Scatter plot of 1,000 samples of Asset 1 and Asset 2 returns, with the contours of the kernel density estimate of the joint density under the two regimes. } %
        \label{fig:visual} %
    \end{figure}
    
We now describe the different choices of $\mathcal{S}$ we consider in our experiments. We allow for $\varepsilon \in \{0,0.01,0.02,0.03\}$, and the radius $r(\cdot)$ is restricted to the functional form 
\[
r(q)=c \cdot q^{\alpha-1} (1-q)^{\beta-1},
\]
where to achieve the mode of $q_0$, we choose  
\[
\alpha=M \cdot q_0 + 1,\,\beta=M \cdot (1-q_0) + 1. 
\] 
In our experiments, we choose $M=10$. We deliberately choose some of the intervals $[q_0-\varepsilon,q_0+\varepsilon]$ to contain $0$, to test the empirical performance of our theory when some assumptions are violated.

\subsection{Mean-Var Disutility} %

We select $\gamma=0.1$ to indicate high risk aversion.
To analyze the convergence rate of our projected subgradient descent algorithm, we choose a fixed step-size ($\eta$) of $0.001$, and choose $c$, $\varepsilon$, and $q_0$ to equal 0.1, 0.03, and 0.024, respectively. %
We plot the suboptimality gap, $J(x_k, a_k) - J^*$, on a logarithmic scale against the iteration number $k$. Since $0 \in [q_0-\varepsilon,q_0+\varepsilon]$, as a result, $h$ is not strongly concave on $[q_0-\varepsilon,q_0+\varepsilon]$. However, in Figure \ref{fig:convergence1}, the convergence rate is still faster than an exponential rate, as the graph lies under a straight line.

    \begin{figure}[htbp]
        \centering %
        \includegraphics[width=0.7\textwidth]{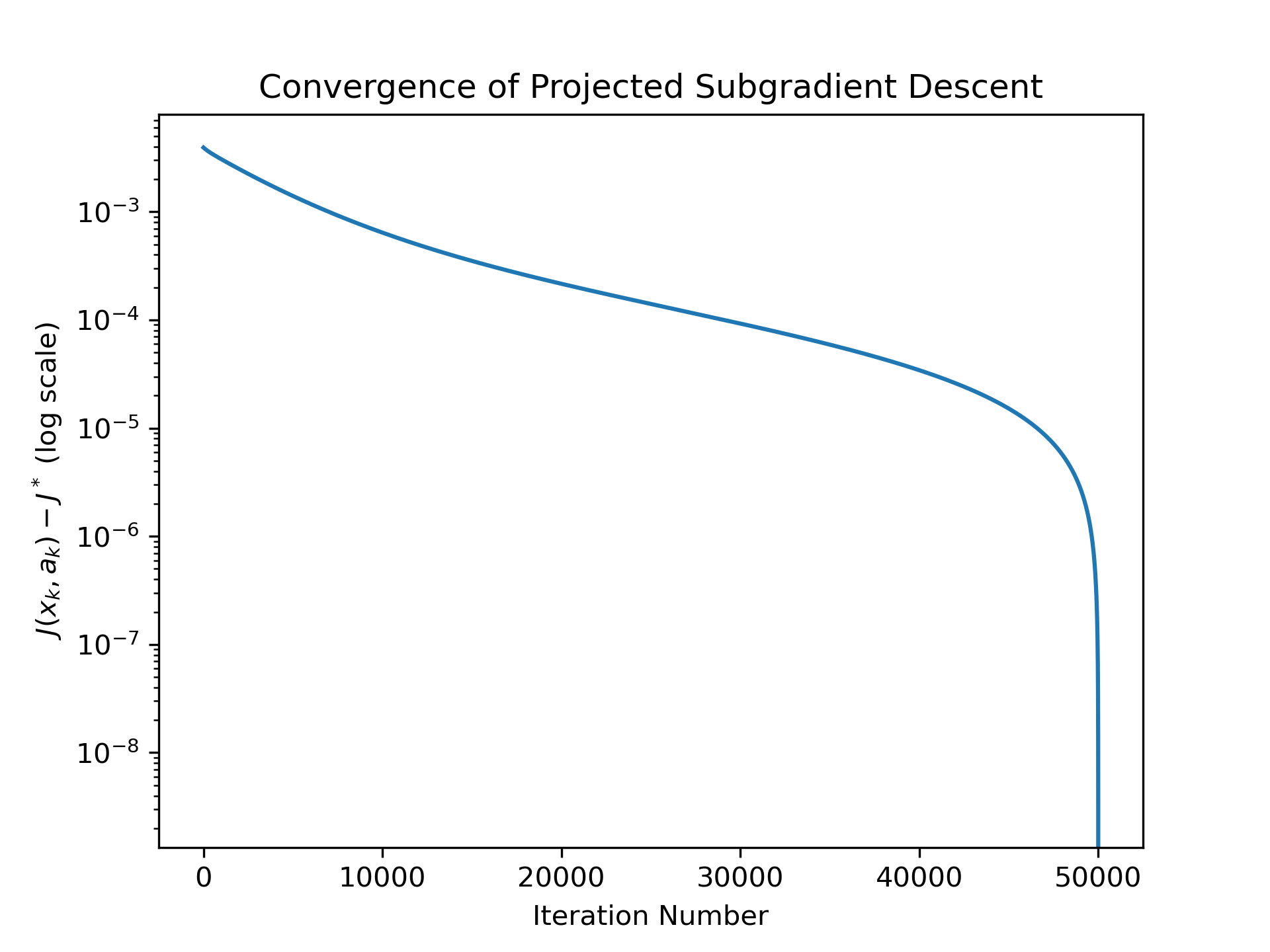} %
        \caption{Convergence of the projected subgradient descent algorithm with a fixed step-size of $\eta = 0.001$, and $c=0.1,\varepsilon=0.03,q_0=0.024$. The y-axis plots the suboptimality gap, $J(x_k, a_k) - J^*$, on a logarithmic scale against the iteration number $k$. Although $h$ is not strongly concave, the algorithm still converges at a super-exponential rate. } %
        \label{fig:convergence1} %
    \end{figure}

We evaluate the out-of-sample performance of our DRO estimator against the standard Sample Average Approximation (SAA) baseline. To create an accurate testing environment, we sample a large dataset of $3 \times 10^6$ returns from the true distribution to serve as the out-of-sample distribution. We run $N=100$ simulations.  In each simulation, a new training set of 1,000 samples is drawn to fit both the DRO and SAA estimators, which are then evaluated on the large out-of-sample test set. A 100 repetition allows us to get the mean and 20th-80th percentiles of the out-of-sample disutility. The left side of Figure \ref{fig:combined-models} plots these results, illustrating how the out-of-sample performance (left column) and the corresponding average portfolio allocation (right column) evolve with the radius parameter $c$ for different half interval width $\varepsilon$. The plots clearly show that the DRO estimator's performance is sensitive to these parameters, often outperforming the SAA (the blue line lies below the red dashed line) up to an optimal $c$. The green line represents the disutility of the best portfolio under the true distribution.

\subsection{Mean-CVaR Disutility}

We use the same setup as in the Mean-Var problem and set $\rho=10$ and $p=0.95$ to indicate high risk aversion, as in the Mean-Var problem. The right side of Figure \ref{fig:combined-models} again shows that the DRO estimator's performance often outperforms the SAA up to an optimal level of $c$.

\begin{figure}[htbp]
    \centering %
    \includegraphics[width=\textwidth]{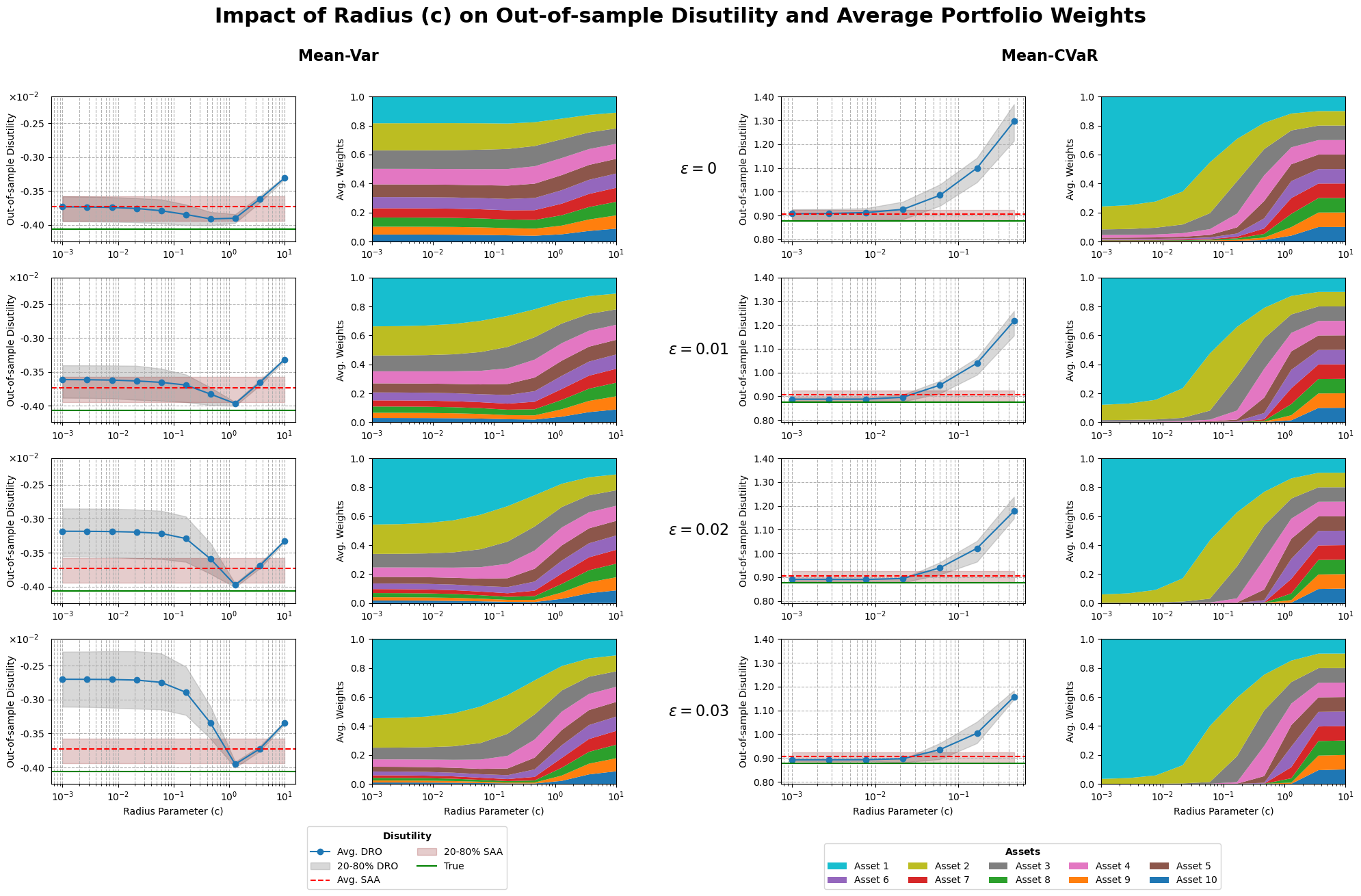}
    \caption{Comparison of Out-of-sample performance and portfolio allocations for (a) the Mean-Var model and (b) the Mean-CVaR model. Each row corresponds to a different half interval width $\varepsilon$, ranging in the set $\{0,0.01,0.02,0.03\}$. The first and third columns correspond to the Mean DRO Out-of-sample disutility (blue line) and its 20th-80th percentile range (shaded area) across 100 simulations (each using 1000 in-sample points). The mean SAA Out-of-sample disutility is the dashed red line, with its 20th-80th percentile range shaded in red. The second and fourth columns correspond to the corresponding average portfolio allocation. The green line represents the disutility of the best portfolio under the true distribution.}
    \label{fig:combined-models} %
\end{figure}

\section{Conclusion}
In this article, we obtained min-max formulations for the Mean-Var and Mean-CVaR DRO problems \eqref{eq:main_problem} \eqref{mean-cvar} with a mixture structure, which can then be solved by projected subgradient descent. Our algorithm guarantees an $O(\frac{1}{\sqrt{T}})$ convergence rate, while for the Mean-Var problem, it can attain exponential convergence rates. In Section 4, we presented numerical examples demonstrating the attainment of exponential rates even when some assumptions are violated. Additionally, we demonstrated the efficacy of our DRO method by comparing out-of-sample disutility with that of the SAA method. In both cases, we show, both theoretically and empirically, that as the radius of our ball tends to infinity, the DRO portfolio converges to the 1/N portfolio.

We briefly discuss the case when a short sale is permitted. Normally, there is a limit posed on the leverage position, i.e., we constrain our portfolio to be $\mathbf{1}'\x=1; \x \geq v$, with $v$ a negative number.  In this case, all the results, including the min-max reformulations and the convergence rates, still hold, since $\x$ lies in a compact set and we can still restrict $a,\tau$ to a compact set. In the idealized scenario when we allow short sale without a limit, i.e., the only constraint we put on $\x$ is $\mathbf{1}'\x=1$, we cannot restrict $a$ and $\tau$ to a compact set. A key step for obtaining similar min-max reformulations as in \eqref{finalopt} and \eqref{minimax_cvar}, only with changes to the domain of the minimum problem, is by showing the ambiguity set $\calS$ is compact with respect to the corresponding Wasserstein metric topology, and applying Sion's minimax theorem (Theorem \ref{sion}). In the Mean-Var problem, the final reformulation becomes 
\begin{equation}
\label{NoRestrictionVar}
\inf_{\mathbf{1}'\x=1,a \in \Real}\;\max_{q \in [q_0-\varepsilon,\, q_0+\varepsilon]} \bigg[ (1-q)\cdot\EP{P_{N}}{\psi_{a,\gamma}(\mathbf{x}'\mathbf{R})} + q\cdot V(q, \mathbf{x},a) \bigg],
\end{equation}
where 
\[
\psi_{a,\gamma}(y) := (y-a)^2 - \gamma y,
\]
and
\[
V(q, \mathbf{x},a)=\left( r(q)\|\mathbf{x}\| + \sqrt{\x'\Sigma_{P_{S_0}} \x + \left(\x' \mu_{P_{S_0}} - \frac{2a+\gamma}{2}\right)^2} \right)^2 - a\gamma - \frac{\gamma^2}{4},
\]
In the Mean-CVaR problem, the final min-max problem becomes
\begin{equation}
\label{NoRestrictionCVaR}
    \inf_{\mathbf{1}'\x=1,\tau \in \Real}\left\{ \rho\tau + \sup_{q \in [q_0\pm\epsilon]} \left( (1-q)\EP{P_{N}}{\ell_{\x,\tau}(\mathbf{R})} + q\left[\EP{P_{S_0}}{\ell_{\x,\tau}(\mathbf{R})} + r(q)(1+\frac{\rho}{1-p})\|\mathbf{x}\|_\infty\right] \right) \right\}.    
\end{equation}

In \eqref{NoRestrictionVar}, all our results still hold since we can restrict $\x$ and $a$ to a compact set without affecting the infimum. However, in \eqref{NoRestrictionCVaR}, it is possible that the minimum is $-\infty$, attained at an arbitrarily large leveraged position. %

\section{Appendix}

\subsection{Some theorems used in Sections 2 and 3}

\begin{Theorem}[Theorem~1 in \citet{gao2023distributionally}]
\label{duality}
For any Polish space $(\Xi, d)$ and measurable function $\Psi$ with $p \geq 1$,
    \[
\sup_{\mu \in \mathcal{P}(\Xi)} 
\left\{ \mathbb{E}_{\mu}[\Psi(x,\xi)] : W_p(\mu,\nu) \le \theta \right\}
= 
\min_{\lambda \ge 0} 
\left\{ 
\lambda \theta^p 
- \int_{\Xi} 
\inf_{\xi \in \Xi} 
\left[ 
\lambda d^p(\xi,\zeta) - \Psi(x,\xi)
\right] 
\nu(d\zeta)
\right\}.
\]\qed
\end{Theorem}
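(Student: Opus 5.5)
The statement is Theorem~1 of \citet{gao2023distributionally}, and we only use it as a black box. If we were to prove it, we would argue weak duality first and then strong duality via a Lagrangian relaxation of a transport-constrained problem. Throughout, write $\Phi_\lambda(\zeta):=\sup_{\xi\in\Xi}\left[\Psi(x,\xi)-\lambda d^p(\xi,\zeta)\right]$, so that the right-hand side reads $\min_{\lambda\ge0}\{\lambda\theta^p+\int\Phi_\lambda\,d\nu\}$.

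\emph{Weak duality.} Fix $\mu$ with $W_p(\mu,\nu)\le\theta$, and let $\pi$ be an optimal coupling of $(\mu,\nu)$, which exists on a Polish space. For any $\lambda\ge0$,
\[
\mathbb{E}_\mu[\Psi]=\int \Psi(x,\xi)\,\pi(d\xi,d\zeta)\le \lambda\theta^p+\int\left[\Psi(x,\xi)-\lambda d^p(\xi,\zeta)\right]\pi(d\xi,d\zeta)\le\lambda\theta^p+\int\Phi_\lambda\,d\nu .
\]
The first inequality uses $\int d^p\,d\pi\le\theta^p$. Taking the supremum over $\mu$ and the infimum over $\lambda$ gives ``$\le$''.

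\emph{Strong duality.} We would restrict the primal to distributions of the form $\mu=\int K(\zeta,\cdot)\,\nu(d\zeta)$, where $K$ is a transition kernel, and define
\[
v(t):=\sup\Big\{\textstyle\int\!\!\int\Psi(x,\xi)\,K(\zeta,d\xi)\,\nu(d\zeta):\int\!\!\int d^p(\xi,\zeta)\,K(\zeta,d\xi)\,\nu(d\zeta)\le t\Big\},
\]
which lower-bounds the primal at $t=\theta^p$. Because convex combinations of admissible kernels are admissible, $v$ is concave and nondecreasing in $t\ge0$. This is the point of allowing kernels rather than deterministic maps. A concave function satisfies $v(t)=\inf_{\lambda\ge0}\{\lambda t+v^*(\lambda)\}$ at points $t>0$ where it is finite, with $v^*(\lambda):=\sup_{s\ge0}\{v(s)-\lambda s\}$. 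The remaining step is to identify $v^*(\lambda)=\int\Phi_\lambda\,d\nu$. The inequality ``$\le$'' is immediate pointwise. For ``$\ge$'' we pick, for each $\zeta$, an $\epsilon$-maximizer $T_\epsilon(\zeta)$ of $\xi\mapsto\Psi(x,\xi)-\lambda d^p(\xi,\zeta)$, and use the Dirac kernel $K(\zeta,\cdot)=\delta_{T_\epsilon(\zeta)}$ with budget $s=\int d^p(T_\epsilon(\zeta),\zeta)\,\nu(d\zeta)$. We then let $\epsilon\to0$, using monotone convergence to handle the case $\int\Phi_\lambda\,d\nu=+\infty$.

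\emph{Attainment and edge cases.} The map $\lambda\mapsto\lambda\theta^p+\int\Phi_\lambda\,d\nu$ is convex and lower semicontinuous, being a supremum of affine functions of $\lambda$. For $\theta>0$ it tends to $+\infty$ as $\lambda\to\infty$, so the infimum is attained. If it is identically $+\infty$, we would show the primal is also $+\infty$ by transporting a small mass far away. The case $\theta=0$ is handled directly: both sides equal $\mathbb{E}_\nu[\Psi]$ in the limit $\lambda\to\infty$, provided we interpret the min suitably.

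\emph{Main obstacle.} We expect measurability to be the main difficulty. The function $\Phi_\lambda$ need not be Borel, and the map $T_\epsilon$ must be measurable for the kernel construction to make sense. Our first attempt would use the Jankov--von Neumann uniformization theorem on the Polish space $\Xi\times\Xi$. This gives that $\Phi_\lambda$ is upper semianalytic, hence universally measurable, so the integral against $\nu$ is well defined. It also gives a universally measurable $\epsilon$-optimal selector $T_\epsilon$. Finally, we would modify $T_\epsilon$ on a $\nu$-null set to obtain a Borel map, which is enough for the kernel construction.
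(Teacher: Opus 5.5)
The paper gives no proof of this statement. Like you, it imports it as a black box from \citet{gao2023distributionally}. That source argues from the first-order optimality condition of the one-dimensional dual and builds near-optimal primal distributions explicitly, splitting mass between two transport maps so that the budget $\theta^p$ is met exactly; this also yields its description of worst-case distributions. Your route is different: concavity of the budget-value function $v$, a supergradient at $\theta^p$, and the interchange $v^*(\lambda)=\int\Phi_\lambda\,d\nu$ via measurable selection. It is shorter and gives dual attainment for free, since any supergradient of $v$ at $\theta^p$ is a dual minimizer once the interchange is established. It also makes your separate ``far-away mass'' argument unnecessary: if $v(\theta^p)<\infty$, concavity makes $v$ finite on $[0,\infty)$, and the interchange then forces the dual to be finite. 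What it does not give is the shape of worst-case distributions. As written, the sketch has two defects.

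\emph{The case $\theta=0$.} Your claim that both sides tend to $\mathbb{E}_\nu[\Psi]$ as $\lambda\to\infty$ is false for merely measurable $\Psi$, and so is the displayed identity. Take $\Xi=\mathbb{R}$, $\nu=\delta_0$, $\Psi=\mathbf{1}_{\mathbb{R}\setminus\{0\}}$. At $\theta=0$ the only feasible $\mu$ is $\nu$, so the primal value is $0$. But $\Phi_\lambda(0)=\sup_{\xi\neq 0}(1-\lambda|\xi|^p)=1$ for every $\lambda\ge 0$, so the dual value is $1$. In general $\lim_{\lambda\to\infty}\Phi_\lambda$ is at best the upper semicontinuous envelope of $\Psi$, not $\Psi$ itself. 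So ``interpreting the min suitably'' does not help. You must assume $\theta>0$, which is a standing assumption in the source and is dropped in the statement as printed, or else assume $\Psi$ upper semicontinuous. Your argument also silently uses $\Psi\in L^1(\nu)$: for finiteness of $v(0)=\int\Psi\,d\nu$, for the bound $\int\Phi_\lambda\,d\nu\ge\int\Psi\,d\nu$ behind coercivity, and in the repair below. That hypothesis is part of the source theorem and should be stated.

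\emph{Admissibility of the selector kernel.} In the step $v^*(\lambda)\ge\int\Phi_\lambda\,d\nu$, the kernel $\delta_{T_\epsilon(\zeta)}$ is feasible for $v(s)$ only if $s=\int d^p(T_\epsilon(\zeta),\zeta)\,\nu(d\zeta)$ is finite. A Jankov--von Neumann selector gives no control over this, and the cost can be infinite at exactly the $\lambda$ you need. Take $p=1$, $\Psi(\xi)=|\xi|$ on $\mathbb{R}$ and $\nu$ standard normal. Then $\Phi_\lambda\equiv+\infty$ for $\lambda<1$, so the dual minimizer is $\lambda^*=1$. At $\lambda=1$, every $\xi=t\zeta$ with $t\ge 1$ is an exact maximizer of $\xi\mapsto|\xi|-|\xi-\zeta|$, so a selector may have $\int|T(\zeta)-\zeta|\,\nu(d\zeta)=\infty$.

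The repair is a truncation. Set $A_M:=\{\zeta: d(T_\epsilon(\zeta),\zeta)\le M\}$, and let $T_{\epsilon,M}:=T_\epsilon$ on $A_M$ and $T_{\epsilon,M}(\zeta):=\zeta$ off $A_M$. Its cost is at most $M^p$. Its Lagrangian value is at least $\int_{A_M}\min\{\Phi_\lambda-\epsilon,1/\epsilon\}\,d\nu+\int_{A_M^c}\Psi\,d\nu$. Since $-|\Psi|-\epsilon\le\min\{\Phi_\lambda-\epsilon,1/\epsilon\}\le 1/\epsilon$, dominated convergence as $M\to\infty$ yields $\int\min\{\Phi_\lambda-\epsilon,1/\epsilon\}\,d\nu$. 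Your monotone-convergence step in $\epsilon$ then completes the interchange. With these two repairs (restricting to $\theta>0$ and truncating), the sketch is a complete proof.
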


\begin{Theorem}[{Proposition A.3.2 in \citet{bertsekas2009convex}: Danskin’s Theorem}]
\label{danskin}
Let $Z \subset \mathbb{R}^m$ be a compact set, and let $\phi : \mathbb{R}^n \times Z \mapsto \mathbb{R}$ be continuous and such that $\phi(\cdot, z) : \mathbb{R}^n \mapsto \mathbb{R}$ is convex for each $z \in Z$.

\begin{enumerate}[label=(\alph*)]
    \item The function $f : \mathbb{R}^n \mapsto \mathbb{R}$ given by
\begin{equation}
\label{maxDanskin}
f(x) = \max_{z \in Z} \phi(x, z)
\end{equation}
is convex and has directional derivative given by
\begin{equation*}
f'(x; y) = \max_{z \in Z(x)} \phi'(x, z; y),
\end{equation*}
where $\phi'(x, z; y)$ is the directional derivative of the function $\phi(\cdot, z)$ at $x$ in the direction $y$, and $Z(x)$ is the set of maximizing points in \eqref{maxDanskin}:
\[
Z(x) = \left\{ z \,\middle|\, \phi(x, z) = \max_{z \in Z} \phi(x, z) \right\}.
\]

In particular, if $Z(x)$ consists of a unique point $\bar{z}$ and $\phi(\cdot, \bar{z})$ is differentiable at $x$, then $f$ is differentiable at $x$, and
\[
\nabla f(x) = \nabla_x \phi(x, \bar{z}),
\]
where $\nabla_x \phi(x, \bar{z})$ is the vector with coordinates
\[
\frac{\partial \phi(x, \bar{z})}{\partial x_i}, \quad i = 1, \ldots, n.
\]

    \item If $\phi(\cdot, z)$ is differentiable for all $z \in Z$ and $\nabla_x \phi(x, \cdot)$ is continuous on $Z$ for each $x$, then
\begin{equation*}
\partial f(x) = \mathrm{conv}\bigl\{ \nabla_x \phi(x, z) \,\big|\, z \in Z(x) \bigr\}, \qquad \forall x \in \mathbb{R}^n.
\end{equation*}
\end{enumerate}
\end{Theorem}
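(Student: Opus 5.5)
Since this statement is Danskin's Theorem as recorded in \citet{bertsekas2009convex}, I would reproduce the standard argument. I would do part (a) first, in three steps: convexity, then a two-sided bound on the directional derivative, then the differentiability corollary. Part (b) would follow by a support-function argument.

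\emph{Convexity and the lower bound.} Convexity of $f$ is immediate, since $f$ is a pointwise maximum of the convex functions $\phi(\cdot,z)$. $Z$ is compact and $\phi$ is continuous, so $f$ is finite and $Z(x)$ is nonempty and compact. For the lower bound, fix $z\in Z(x)$ and $t>0$. Then
\[
\frac{f(x+ty)-f(x)}{t}\ \ge\ \frac{\phi(x+ty,z)-\phi(x,z)}{t}.
\]
Letting $t\downarrow 0$ gives $f'(x;y)\ge \phi'(x,z;y)$, and hence $f'(x;y)\ge\max_{z\in Z(x)}\phi'(x,z;y)$. Here directional derivatives of finite convex functions exist, and $f'(x;y)$ exists since $f$ is convex.

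\emph{Upper bound (main obstacle).} Take $t_k\downarrow 0$ and $z_k\in Z(x+t_ky)$. By compactness I would pass to a subsequence with $z_k\to\bar z$. Continuity of $\phi$ and of $f$ (a finite convex function on $\Real^n$) give $\phi(x,\bar z)=\lim\phi(x+t_ky,z_k)=\lim f(x+t_ky)=f(x)$, so $\bar z\in Z(x)$. Using $f(x)\ge\phi(x,z_k)$ and convexity of $\phi(\cdot,z_k)$ along the segment, for any fixed $\epsilon>0$ and $k$ large enough that $t_k<\epsilon$,
\[
\frac{f(x+t_ky)-f(x)}{t_k}\le \frac{\phi(x+t_ky,z_k)-\phi(x,z_k)}{t_k}\le \phi'(x+t_ky,z_k;y)\le\frac{\phi(x+t_ky+\epsilon y,z_k)-\phi(x+t_ky,z_k)}{\epsilon}.
\]
Letting $k\to\infty$ and using joint continuity of $\phi$, the right-hand side tends to $\bigl(\phi(x+\epsilon y,\bar z)-\phi(x,\bar z)\bigr)/\epsilon$. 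Letting $\epsilon\downarrow0$ then gives $f'(x;y)\le\phi'(x,\bar z;y)\le\max_{z\in Z(x)}\phi'(x,z;y)$. The delicate point is exactly this interchange of limits, and the trick of bounding by an $\epsilon$-difference quotient before passing $k\to\infty$ is what makes it work. It also shows that the maximum is attained.

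\emph{Differentiability and part (b).} If $Z(x)=\{\bar z\}$ and $\phi(\cdot,\bar z)$ is differentiable at $x$, then $f'(x;y)=\nabla_x\phi(x,\bar z)'y$ is linear in $y$. For a convex function, a linear directional derivative means the subdifferential is a single point, so $f$ is differentiable with gradient $\nabla_x\phi(x,\bar z)$.

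For (b), let $C=\mathrm{conv}\{\nabla_x\phi(x,z):z\in Z(x)\}$. $Z(x)$ is compact and $\nabla_x\phi(x,\cdot)$ is continuous, so the image is compact and $C$ is compact and convex. Part (a) gives
\[
f'(x;y)=\max_{z\in Z(x)}\nabla_x\phi(x,z)'y=\max_{g\in C}g'y,
\]
the support function of $C$. On the other hand, for convex $f$, $f'(x;\cdot)$ is the support function of the compact convex set $\partial f(x)$. Two closed convex sets with the same support function coincide, so $\partial f(x)=C$.
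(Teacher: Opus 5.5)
Your proof is correct. The paper does not prove this result itself; it quotes it from Bertsekas, and your argument is essentially the standard proof found there: the lower bound from a fixed maximizer, the upper bound from a convergent sequence of maximizers $z_k\in Z(x+t_ky)$ plus upper semicontinuity of $\phi'(\cdot,\cdot;y)$ (which you establish inline via the $\epsilon$-difference quotient), and part (b) by matching support functions. One cosmetic point: in the lower-bound step write $\sup_{z\in Z(x)}$ rather than $\max$, since attainment is only established in the upper-bound step, as you yourself note.
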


\begin{lemma}[Berge's {Maximum Theorem}, \citet{Berge}]
    \label{Berge}
Let $X$ and $\Theta$ be topological spaces, 
$f : X \times \Theta \to \mathbb{R}$ be a continuous function on the product $X \times \Theta$, 
and $C : \Theta \rightrightarrows X$ be a compact-valued correspondence such that 
$C(\theta) \neq \emptyset$ for all $\theta \in \Theta$. 

Define the \emph{marginal function} (or \emph{value function}) 
$f^* : \Theta \to \mathbb{R}$ by
\[
f^*(\theta) = \sup \{ f(x,\theta) : x \in C(\theta) \}
\]
and the \emph{set of maximizers} 
$C^* : \Theta \rightrightarrows X$ by
\[
C^*(\theta) = \operatorname*{arg\,max}_{x \in C(\theta)} f(x,\theta)
= \{ x \in C(\theta) : f(x,\theta) = f^*(\theta) \}.
\]

If $C$ is continuous (i.e.\ both upper and lower hemicontinuous) at $\theta$, 
then the value function $f^*$ is continuous, 
and the set of maximizers $C^*$ is upper hemicontinuous with nonempty and compact values. 
As a consequence, the $\sup$ may be replaced by $\max$. \qed
\end{lemma}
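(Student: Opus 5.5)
The plan is to prove the four conclusions in order: $C^*(\theta)$ is nonempty and compact, $f^*$ is lower semicontinuous at $\theta$, $f^*$ is upper semicontinuous at $\theta$, and $C^*$ is upper hemicontinuous at $\theta$. Since $X$ and $\Theta$ are general topological spaces, I will argue with open neighbourhoods, or with nets where that is cleaner, and never with sequences.

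\textbf{Nonemptiness, compactness and $\sup=\max$.} For fixed $\theta$, the map $f(\cdot,\theta)$ is continuous on the nonempty compact set $C(\theta)$. By Weierstrass it attains its supremum, so $f^*(\theta)$ is finite and $C^*(\theta)\neq\emptyset$. Moreover $C^*(\theta)=C(\theta)\cap f(\cdot,\theta)^{-1}(\{f^*(\theta)\})$ is a closed subset of a compact set, hence compact. This already justifies replacing $\sup$ by $\max$.

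\textbf{Lower semicontinuity from lower hemicontinuity.} Fix $\epsilon>0$ and a maximizer $x^*\in C^*(\theta)$.
\begin{enumerate}
\item By joint continuity of $f$ there are open sets $U\ni x^*$ and $W\ni\theta$ with $f(x,\theta')>f^*(\theta)-\epsilon$ for all $(x,\theta')\in U\times W$.
\item By lower hemicontinuity, $C(\theta')\cap U\neq\emptyset$ for all $\theta'$ in some neighbourhood $W'$ of $\theta$.
\item Hence $f^*(\theta')>f^*(\theta)-\epsilon$ on $W\cap W'$.
\end{enumerate}

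\textbf{Upper semicontinuity from upper hemicontinuity.} This is where compactness is essential, and I expect it to be the main obstacle. Fix $\epsilon>0$.
\begin{enumerate}
\item For each $x\in C(\theta)$ we have $f(x,\theta)\le f^*(\theta)$. Joint continuity gives open $U_x\ni x$ and $W_x\ni\theta$ with $f<f^*(\theta)+\epsilon$ on $U_x\times W_x$.
\item Compactness of $C(\theta)$ yields a finite subcover $U_{x_1},\dots,U_{x_n}$. Set $U=\bigcup_i U_{x_i}$ and $W=\bigcap_i W_{x_i}$; this is a tube-lemma argument.
\item Upper hemicontinuity gives a neighbourhood $W''$ of $\theta$ with $C(\theta')\subset U$ for $\theta'\in W''$.
\item For $\theta'\in W\cap W''$, every $x\in C(\theta')$ lies in some $U_{x_i}$ with $\theta'\in W_{x_i}$, so $f(x,\theta')<f^*(\theta)+\epsilon$. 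Hence $f^*(\theta')\le f^*(\theta)+\epsilon$.
\end{enumerate}
The delicate point is that without the finite subcover we cannot get a single neighbourhood of $\theta$ that works uniformly in $x$.

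\textbf{Upper hemicontinuity of $C^*$.} I argue by contradiction. Suppose there is an open $G\supset C^*(\theta)$ such that every neighbourhood of $\theta$ contains some $\theta'$ with a point $x_{\theta'}\in C^*(\theta')\setminus G$. This produces a net $\theta_\alpha\to\theta$ with $x_\alpha\in C(\theta_\alpha)\setminus G$ and $f(x_\alpha,\theta_\alpha)=f^*(\theta_\alpha)$.
\begin{enumerate}
\item Upper hemicontinuity together with compactness of $C(\theta)$ gives a subnet $x_\beta\to\bar x\in C(\theta)$. The standard fact used here is that a uhc compact-valued correspondence has the property that nets $x_\alpha\in C(\theta_\alpha)$ with $\theta_\alpha\to\theta$ have cluster points in $C(\theta)$. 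This is proved by the same finite-cover trick as above: if no point of $C(\theta)$ were a cluster point, cover $C(\theta)$ by finitely many open sets each eventually avoided by the net, and contradict uhc.
\item Since $X\setminus G$ is closed, $\bar x\notin G$.
\item Continuity of $f$ and of $f^*$, the latter just shown, give $f(\bar x,\theta)=\lim f(x_\beta,\theta_\beta)=\lim f^*(\theta_\beta)=f^*(\theta)$. So $\bar x\in C^*(\theta)\subset G$, a contradiction.
\end{enumerate}
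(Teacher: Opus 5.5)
Your proof is correct and complete in the stated generality. You avoid sequences throughout. The finite-subcover (tube-lemma) step is exactly what upper semicontinuity of $f^*$ requires. Your contradiction argument for upper hemicontinuity of $C^*$ correctly extracts a cluster point in $C(\theta)$ from upper hemicontinuity plus compactness of $C(\theta)$. Your argument also works in non-Hausdorff spaces, because you only use that a set closed in $X$, intersected with the compact set $C(\theta)$, is compact.

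The paper gives no proof of this lemma. It is stated without proof and attributed to Berge, so there is no competing argument to compare against. Your route is the standard textbook one: lower semicontinuity of $f^*$ from lower hemicontinuity, upper semicontinuity from upper hemicontinuity with compact values, then a limit argument placing the cluster point in the argmax. You prove the conclusions pointwise at $\theta$, which is the right reading of the somewhat loosely worded statement.

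A minor simplification: in the last step you only need lower semicontinuity of $f^*$ at $\theta$, since $f(\bar x,\theta)\le f^*(\theta)$ is automatic once $\bar x\in C(\theta)$.

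The paper uses the lemma only for continuity of $J$ in Theorem \ref{mean_cvar_minimax_reform}. There the correspondence is the constant interval $[q_0-\varepsilon,q_0+\varepsilon]$, so only your two semicontinuity steps are actually used.
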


\subsection{Convergence rate of Projected Subgradient Descent}

The fundamental complexity result for the projected subgradient descent algorithm is the following:

\begin{Theorem}[Theorem 3.2 in \citet{bubeck2015convex}]
\label{slowConverge}
Let J be a convex function on $\mathcal{X}$, a compact and convex Euclidean set. Let $x_k$ be the vector of decision variables at iteration $k$, and let $x^*$ be an optimal solution. Assume J is $G$-Lipschitz in $\mathcal{X}$ and $\|x_0-x^*\| \leq R$ . Projected gradient descent with constant step size $\eta_k=\frac{R}{G \sqrt{T}}$ (i.e. $x_{k+1} = \text{Proj}_\mathcal{X}(x_k - \eta_k \mathbf{g}_k)$) satisfies
\[J\left(\frac{1}{T}\sum_{i=1}^{T}x_i\right)-J(x^*) \leq \frac{RG}{\sqrt{T}}.\]
\qed
\end{Theorem}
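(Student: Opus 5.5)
The plan is the standard online-to-batch argument for projected subgradient descent with a bounded subgradient oracle. Write $y_{k+1}=x_k-\eta \mathbf{g}_k$ with $\mathbf{g}_k\in\partial J(x_k)$ and $x_{k+1}=\text{Proj}_\mathcal{X}(y_{k+1})$. Convexity of $J$ gives
\[
J(x_k)-J(x^*)\le \mathbf{g}_k'(x_k-x^*)=\frac{1}{\eta}(x_k-y_{k+1})'(x_k-x^*).
\]
The polarization identity $2u'v=\|u\|^2+\|v\|^2-\|u-v\|^2$ then rewrites the right-hand side as
\[
\frac{1}{2\eta}\left(\|x_k-x^*\|^2-\|y_{k+1}-x^*\|^2\right)+\frac{\eta}{2}\|\mathbf{g}_k\|^2 .
\]
The $G$-Lipschitz assumption bounds every subgradient at points of $\mathcal{X}$ by $G$, so the last term is at most $\eta G^2/2$. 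Note that $\mathcal{X}$ has nonempty interior in the paper's setting after taking affine hulls; at worst one works relative to the affine hull.

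The one non-routine step is to replace $\|y_{k+1}-x^*\|$ by $\|x_{k+1}-x^*\|$. For this I would invoke the projection inequality on the closed convex set $\mathcal{X}$. Since $x^*\in\mathcal{X}$, the obtuse-angle characterization of $\text{Proj}_\mathcal{X}$ gives
\[
(x^*-x_{k+1})'(y_{k+1}-x_{k+1})\le 0,
\]
which implies $\|x_{k+1}-x^*\|\le\|y_{k+1}-x^*\|$. Hence
\[
J(x_k)-J(x^*)\le \frac{1}{2\eta}\left(\|x_k-x^*\|^2-\|x_{k+1}-x^*\|^2\right)+\frac{\eta G^2}{2}.
\]

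Next I would sum this over $T$ consecutive iterates so that the distance terms telescope. The sum is bounded by $\|x_0-x^*\|^2/(2\eta)+T\eta G^2/2\le R^2/(2\eta)+T\eta G^2/2$. Jensen's inequality, using convexity of $J$, then bounds the value at the averaged iterate by the average of the values. Dividing by $T$ and substituting $\eta=R/(G\sqrt T)$, which balances the two terms, gives $RG/\sqrt T$.

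The only bookkeeping issue is an index shift: the theorem averages $x_1,\dots,x_T$, whereas the telescoping naturally covers $x_0,\dots,x_{T-1}$. I would run the telescoping from iterate $1$ to $T$, which requires the bound $\|x_1-x^*\|\le R$. This follows from $\|x_1-x^*\|\le\|x_0-x^*\|+\eta G$ up to a constant, or more cleanly from one more application of the nonexpansiveness step when $x^*$ is interior. Alternatively, one can read the statement with the indices $0,\dots,T-1$, as in Bubeck; the rate is unchanged either way.

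The main obstacle is the projection nonexpansiveness step. I would prove it directly from the first-order optimality condition of the quadratic program \eqref{proj}.
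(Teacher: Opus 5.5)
Your main line is the standard proof of Theorem 3.2 in Bubeck, which the paper quotes without proof: convexity, polarization, the projection inequality (Bubeck's Lemma 3.1), telescoping, Jensen. The problem is the index shift you flagged, and your first two ways of handling it fail.

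Using $\|x_1-x^*\|\le\|x_0-x^*\|+\eta G=R(1+T^{-1/2})$ in the telescoped sum gives the bound $\frac{RG}{2\sqrt T}\bigl[(1+T^{-1/2})^2+1\bigr]$. This exceeds $RG/\sqrt T$, so it does not prove the displayed inequality. Nonexpansiveness of $\text{Proj}_{\mathcal{X}}$ only gives $\|x_1-x^*\|\le\|x_0-\eta\mathbf{g}_0-x^*\|$. The square of the right-hand side is $\|x_0-x^*\|^2-2\eta\,\mathbf{g}_0'(x_0-x^*)+\eta^2\|\mathbf{g}_0\|^2$. The last term can dominate the middle one, so the subgradient step can push you farther from $x^*$, whether or not $x^*$ is interior.

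In fact nothing can close this gap, because the statement as printed is false. Take $T=1$, so $\eta=R/G$, and let $\mathcal{X}\subset\Real^2$ be the closed ball of radius $10R$ about $0$. Set $\hat{\mathbf{g}}=(\tfrac14,\tfrac{\sqrt{15}}{4})$, $\mathbf{u}=(\tfrac14,-\tfrac{\sqrt{15}}{4})$ and $J(x)=G\max\{0,\hat{\mathbf{g}}'x,\mathbf{u}'x\}$. This $J$ is convex and $G$-Lipschitz, with optimal solution $x^*=0$. At $x_0=(R,0)$ both linear pieces are active, so $G\hat{\mathbf{g}}\in\partial J(x_0)$, and
\[
x_1=x_0-R\hat{\mathbf{g}}=\left(\tfrac34R,\,-\tfrac{\sqrt{15}}{4}R\right)\in\mathcal{X},\qquad J(x_1)-J(x^*)=G\,\mathbf{u}'x_1=\tfrac98RG>RG .
\]
Rotating $x_0$ slightly makes $J$ differentiable there without changing the conclusion. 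Here $\|x_1-x^*\|=\sqrt{3/2}\,R>R$ even though $x^*$ is interior.

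So your third option is the one to adopt. In Bubeck the iteration starts at the point assumed to lie within $R$ of $x^*$ (indeed $\mathcal{X}\subset B(x_1,R)$ there). The correct statement therefore averages $x_0,\dots,x_{T-1}$ under $\|x_0-x^*\|\le R$. Alternatively, the printed average of $x_1,\dots,x_T$ holds if $R$ bounds $\|x-x^*\|$ for every $x\in\mathcal{X}$ (e.g.\ $R=\mathrm{diam}\,\mathcal{X}$), since then $\|x_1-x^*\|\le R$ is automatic. With that correction your argument is complete.

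A smaller point: $G$-Lipschitzness on $\mathcal{X}$ does not bound every subgradient at relative-boundary points, since normal-cone directions can be added. As Bubeck does, you should assume the oracle returns $\|\mathbf{g}_k\|\le G$. Such subgradients always exist, e.g.\ from the convex $G$-Lipschitz extension $\inf_{y\in\mathcal{X}}\{J(y)+G\|x-y\|\}$.
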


However, the above theorem has a relatively slow convergence rate of $O(1/\sqrt{T})$. %
To strengthen the conclusions, we need stronger assumptions on $J$, and for which we introduce the following definitions:
\begin{definition}[Strong Convexity] \label{sconv}
    For $\alpha >0$, a function $f: \Omega \subset \Real^n \rightarrow \Real$ is $\alpha-$strongly convex if $f-\frac{\alpha}{2} \Vert \x\Vert^2$ is convex. 
\end{definition}

\begin{definition}[$\beta$-smooth] \label{smooth}
We say that a continuously differentiable convex function $f$ is $\beta$-smooth 
if the gradient $\nabla f$ is $\beta$-Lipschitz, that is
\[
\|\nabla f(x) - \nabla f(y)\| \le \beta \|x - y\|.
\]
\end{definition}
Note that if $f$ is twice differentiable, then strong convexity is equivalent to the eigenvalues of the Hessians being at least $\alpha$. On the other hand, $\beta$-smoothness is equivalent to the eigenvalues of the Hessians being at most $\beta$. The ratio $\beta/\alpha$ is known as the condition number of $f$.

\begin{Theorem}[Theorem 3.9 in \citet{bubeck2015convex}]
\label{mediumConverge}
Let $J$ be $\alpha$-strongly convex and $L$-Lipschitz on $\mathcal{X}$, a compact and convex Euclidean set. 
Then projected subgradient descent with time-varying step size
\[
\eta_s = \frac{2}{\alpha (s+1)}
\]
satisfies
\[
J\left( \sum_{s=1}^T \frac{2s}{T(T+1)} x_s \right) - J(x^*)
\;\le\;
\frac{2L^2}{\alpha (T+1)} .
\]

\qed
\end{Theorem}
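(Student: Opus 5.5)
The statement to prove is Theorem \ref{mediumConverge}, the $O(1/T)$ rate for projected subgradient descent on an $\alpha$-strongly convex, $L$-Lipschitz function. I would follow the standard one-step potential argument.

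\textbf{Step 1: one-step bound.} Fix $s\ge1$ and let $g_s\in\partial J(x_s)$, so $\|g_s\|\le L$ by the Lipschitz assumption. Set $y_{s+1}=x_s-\eta_s g_s$ and $x_{s+1}=\text{Proj}_{\mathcal{X}}(y_{s+1})$. Strong convexity (Definition \ref{sconv}) gives
\[
J(x_s)-J(x^*)\le g_s'(x_s-x^*)-\tfrac{\alpha}{2}\|x_s-x^*\|^2 .
\]
For the inner product I would use the identity
\[
2\eta_s g_s'(x_s-x^*)=\|x_s-x^*\|^2+\eta_s^2\|g_s\|^2-\|y_{s+1}-x^*\|^2 .
\]
Since $x^*\in\mathcal{X}$ and $\mathcal{X}$ is convex, the projection is nonexpansive toward points of $\mathcal{X}$, so $\|x_{s+1}-x^*\|\le\|y_{s+1}-x^*\|$. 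Combining these yields
\[
J(x_s)-J(x^*)\le \tfrac{L^2\eta_s}{2}+\Big(\tfrac{1}{2\eta_s}-\tfrac{\alpha}{2}\Big)\|x_s-x^*\|^2-\tfrac{1}{2\eta_s}\|x_{s+1}-x^*\|^2 .
\]

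\textbf{Step 2: weighting and telescoping.} Substitute $\eta_s=2/(\alpha(s+1))$, so $\tfrac{1}{2\eta_s}=\tfrac{\alpha(s+1)}{4}$ and $\tfrac{1}{2\eta_s}-\tfrac{\alpha}{2}=\tfrac{\alpha(s-1)}{4}$. Multiply the one-step bound by $s$. The distance terms become
\[
\tfrac{\alpha}{4}\Big(s(s-1)\|x_s-x^*\|^2-s(s+1)\|x_{s+1}-x^*\|^2\Big),
\]
which telescopes over $s=1,\dots,T$. The first term vanishes because $s(s-1)=0$ at $s=1$, and the last term is nonpositive. What remains is
\[
\sum_{s=1}^T s\big(J(x_s)-J(x^*)\big)\le \sum_{s=1}^T \frac{L^2 s}{\alpha(s+1)}\le \frac{L^2T}{\alpha}.
\]

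\textbf{Step 3: averaging.} Jensen's inequality applied to the convex $J$ with weights $\tfrac{2s}{T(T+1)}$, which sum to $1$, gives
\[
J\Big(\sum_{s=1}^T \tfrac{2s}{T(T+1)}x_s\Big)-J(x^*)\le \frac{2}{T(T+1)}\cdot\frac{L^2T}{\alpha}=\frac{2L^2}{\alpha(T+1)} .
\]

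The only delicate step is choosing the weights $s$ so that the coefficients $\tfrac{\alpha(s-1)}{4}$ and $\tfrac{\alpha(s+1)}{4}$ align and the sum telescopes. With $\eta_s=2/(\alpha(s+1))$ this works, and the rest is routine.
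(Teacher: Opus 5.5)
Your proof is correct and is the standard argument behind the cited result (Theorem 3.9 in Bubeck's monograph), which the paper invokes by citation without reproducing. It has the same three steps: the one-step strong-convexity bound, multiplication by $s$ so that the coefficients $\tfrac{\alpha}{4}s(s-1)$ and $\tfrac{\alpha}{4}s(s+1)$ telescope, and Jensen's inequality with weights $\tfrac{2s}{T(T+1)}$. The only point to state more carefully is the hypothesis. As in Bubeck, ``$L$-Lipschitz on $\mathcal{X}$'' should be read as $\|g_s\|\le L$ for the subgradients the algorithm actually uses, because on a constrained set Lipschitzness alone does not bound every subgradient at boundary points. This is exactly what the paper's application supplies through the Danskin subgradients bounded by $K_1$.
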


\begin{Theorem}[Theorem 3.10 in \citet{bubeck2015convex}]
\label{fastConverge}
Let $J$ be $\alpha$-strongly convex and $\beta$-smooth on $\mathcal{X}$, a compact and convex Euclidean set, with $\kappa$ denoting its condition number.  Then projected gradient descent with a constant step size of $(1/\beta)$ satisfies for $T \ge 0$,
\[
\|\theta_{T} - \theta^*\|^2 \le \exp\!\left(-\frac{T}{\kappa}\right) \|\theta_0 - \theta^*\|^2.
\]
Note that 
\[
J(\theta_{T}) - J(\theta^*) \le \frac{\beta}{2}\,
\|\theta_{T} - \theta^*\|^2  \leq \frac{\beta}{2}\, \exp\!\left(-\frac{T}{\kappa}\right) \|\theta_0 - \theta^*\|^2.
\]
\qed
\end{Theorem}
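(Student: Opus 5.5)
The first claim is a per-step contraction in distance to $\theta^*$. I would obtain it from one inequality for the projected gradient step, and then iterate. Fix an iterate $\theta\in\mathcal{X}$ and write $\theta^+=\text{Proj}_{\mathcal X}\bigl(\theta-\tfrac1\beta\nabla J(\theta)\bigr)$. Define the gradient mapping $g_{\mathcal X}(\theta):=\beta(\theta-\theta^+)$, so that $\theta^+=\theta-\tfrac1\beta g_{\mathcal X}(\theta)$.

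\textbf{Key inequality.} I would show that for every $y\in\mathcal X$,
\[
J(\theta^+)-J(y)\le g_{\mathcal X}(\theta)'(\theta-y)-\frac{1}{2\beta}\|g_{\mathcal X}(\theta)\|^2-\frac{\alpha}{2}\|\theta-y\|^2 .
\]
Three ingredients combine to give it.
\begin{enumerate}[label=(\alph*)]
\item Projection optimality. The variational characterization of $\text{Proj}_{\mathcal X}$ gives $\bigl(\nabla J(\theta)-g_{\mathcal X}(\theta)\bigr)'(\theta^+-y)\le 0$ for all $y\in\mathcal X$.
\item Smoothness. $\beta$-smoothness gives $J(\theta^+)\le J(\theta)+\nabla J(\theta)'(\theta^+-\theta)+\frac\beta2\|\theta^+-\theta\|^2$.
\item Strong convexity. $\alpha$-strong convexity gives $J(\theta)\le J(y)+\nabla J(\theta)'(\theta-y)-\frac\alpha2\|\theta-y\|^2$.
\end{enumerate}
Adding (b) and (c) produces $\nabla J(\theta)'(\theta^+-y)$. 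Using (a), replace it by $g_{\mathcal X}(\theta)'(\theta^+-y)$. Then substitute $\theta^+-\theta=-g_{\mathcal X}/\beta$ and simplify to reach the displayed inequality.

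\textbf{Contraction.} Apply the key inequality with $y=\theta^*$. Since $J(\theta^+)\ge J(\theta^*)$, this yields
\[
g_{\mathcal X}'(\theta-\theta^*)\ge \frac1{2\beta}\|g_{\mathcal X}\|^2+\frac\alpha2\|\theta-\theta^*\|^2 .
\]
Expanding the square,
\[
\|\theta^+-\theta^*\|^2=\|\theta-\theta^*\|^2-\tfrac2\beta g_{\mathcal X}'(\theta-\theta^*)+\tfrac1{\beta^2}\|g_{\mathcal X}\|^2 .
\]
Inserting the previous bound cancels the $\|g_{\mathcal X}\|^2$ terms exactly and leaves $\|\theta^+-\theta^*\|^2\le (1-\alpha/\beta)\|\theta-\theta^*\|^2$. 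Iterating $T$ times and using $1-1/\kappa\le e^{-1/\kappa}$ gives $\|\theta_T-\theta^*\|^2\le e^{-T/\kappa}\|\theta_0-\theta^*\|^2$.

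\textbf{Function-value bound (main obstacle).} I expect this step to be the real difficulty. The naive route is smoothness at $\theta^*$: $J(\theta_T)-J(\theta^*)\le \nabla J(\theta^*)'(\theta_T-\theta^*)+\frac\beta2\|\theta_T-\theta^*\|^2$. In the unconstrained or interior case, where $\nabla J(\theta^*)=0$, this immediately gives the stated inequality. On a constrained set, however, first-order optimality only says $\nabla J(\theta^*)'(\theta_T-\theta^*)\ge0$, which points the wrong way.

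So I would instead go back to the key inequality with $y=\theta^*$ and $\theta=\theta_{T-1}$. Drop the $-\frac\alpha2$ term and maximize the remaining quadratic over $g_{\mathcal X}$. This gives
\[
J(\theta_T)-J(\theta^*)\le \max_{g}\Bigl\{g'(\theta_{T-1}-\theta^*)-\tfrac1{2\beta}\|g\|^2\Bigr\}=\tfrac\beta2\|\theta_{T-1}-\theta^*\|^2 .
\]
Combined with the contraction, this is the stated bound up to a one-step index shift, i.e.\ a harmless factor $e^{1/\kappa}$ or replacing $T$ by $T-1$. I would record that the displayed ``Note'' holds verbatim when $\theta^*$ is an interior minimizer, and in this shifted form in general.
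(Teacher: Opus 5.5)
Your contraction argument is correct, and it is the same proof the paper relies on. The paper gives no proof of its own and simply cites Bubeck's Theorem 3.10. That theorem is proved exactly through your key inequality (his Lemma 3.11, obtained from the same three ingredients), followed by the same expansion of $\|\theta^+-\theta^*\|^2$ with $y=\theta^*$.

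The ``Note'' is the paper's own addition to Bubeck's statement, and the paper does not justify it. You are right that the middle inequality $J(\theta_T)-J(\theta^*)\le\frac{\beta}{2}\|\theta_T-\theta^*\|^2$ follows from smoothness only when $\nabla J(\theta^*)=0$. It genuinely fails on constrained sets, even after a step of the algorithm. Take $J(\theta)=\theta_1^2+\frac12\theta_2^2+\theta_2$ on $[-1,1]\times[0,2]$, so $\alpha=1$, $\beta=2$, $\theta^*=0$, and start at $\theta_0=(0,\frac32)$. One step gives $\theta_1=(0,\frac14)$, with $J(\theta_1)-J(\theta^*)=\frac{9}{32}$, which exceeds $\frac{1}{16}=\frac{\beta}{2}\|\theta_1-\theta^*\|^2$.

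Your fallback, however, is weaker than it needs to be. Do not drop the $-\frac{\alpha}{2}\|\theta_{T-1}-\theta^*\|^2$ term. If you keep it, the same maximization over $g$ gives, for every $T\ge 1$,
\[
J(\theta_T)-J(\theta^*)\le \frac{\beta-\alpha}{2}\|\theta_{T-1}-\theta^*\|^2=\frac{\beta}{2}\Bigl(1-\frac{1}{\kappa}\Bigr)\|\theta_{T-1}-\theta^*\|^2\le\frac{\beta}{2}\Bigl(1-\frac{1}{\kappa}\Bigr)^{T}\|\theta_0-\theta^*\|^2\le \frac{\beta}{2}e^{-T/\kappa}\|\theta_0-\theta^*\|^2 .
\]
So the end-to-end bound, which is what the paper actually uses in Theorem~\ref{fastConvergeVar} on the simplex, holds verbatim with no index shift.

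Only two parts of the Note really need $\nabla J(\theta^*)=0$. The first is the intermediate inequality. The second is the case $T=0$: for $J(\theta)=\frac12\theta^2+\theta$ on $[0,1]$ with $\theta_0=1$, we get $\frac32>\frac12$.
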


\subsection{Minimax Theorems}

\begin{Theorem}[{Theorem 3} in \citet{Terk1972Minimax}]
    Let $X$ be a compact connected topological space, let $Y$ be a set, and let $f: X \times Y \to \mathbb{R}$ be a function satisfying:
\begin{enumerate}
    \item[(i)] For any $y_1, y_2 \in Y$ there exists $y_0 \in Y$ such that
    $$ f(x, y_0) \ge \left(\frac{1}{2}\right)(f(x,y_1)+f(x,y_2)) $$
    for all $x \in X$.
    \item[(ii)] Every finite intersection of sets of the form $\{x \in X : f(x,y) \le \alpha\}$, with $(y,\alpha) \in Y \times \mathbb{R}$, is closed and connected.
\end{enumerate}
Then
$$ \sup_{y \in Y} \min_{x \in X} f(x,y) = \min_{x \in X} \sup_{y \in Y} f(x,y). $$
\end{Theorem}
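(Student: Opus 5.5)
Since the statement is quoted from \citet{Terk1972Minimax}, the paper only uses it. Here is how I would prove it directly. The inequality $\sup_{y}\min_{x} f\le \min_{x}\sup_{y} f$ is trivial, so the work is the reverse. The minimum on the right is attained because (ii) makes every sublevel set $A_y(\alpha):=\{x: f(x,y)\le\alpha\}$ closed. So $f(\cdot,y)$ is lower semicontinuous, $\sup_y f(\cdot,y)$ is lower semicontinuous, and $X$ is compact.

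Fix $\alpha<\min_x\sup_y f(x,y)$. Then $\bigcap_{y}A_y(\alpha)=\emptyset$. By compactness of $X$ and closedness of the $A_y(\alpha)$, some finite subfamily already has empty intersection: $A_{y_1}(\alpha)\cap\cdots\cap A_{y_n}(\alpha)=\emptyset$. It then suffices to produce a single $y_0$ with $A_{y_0}(\alpha')=\emptyset$ for some $\alpha'$ slightly below $\alpha$, since then $\min_x f(x,y_0)>\alpha'$. I would argue by induction on $n$.

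\textbf{Reduction to $n=2$.} Apply the inductive hypothesis on the subspace $X'=A_{y_n}(\alpha)$. By (ii) it is closed, hence compact, and connected. Finite intersections of sublevel sets relative to $X'$ are still closed and connected, by (ii) again. Condition (i) restricts trivially. On $X'$ the sets $A_{y_1},\dots,A_{y_{n-1}}$ have empty intersection, so the $(n-1)$ case gives $y'$ with $A_{y'}(\alpha')\cap A_{y_n}(\alpha)=\emptyset$. This is the two-set situation for the pair $y',y_n$. A small bookkeeping of levels (lowering $\alpha$ by a vanishing amount at each stage) keeps this consistent.

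\textbf{The two-set case, which I expect to be the main obstacle.} Suppose $A_1:=A_{y_1}(\alpha)$ and $A_2:=A_{y_2}(\alpha)$ are disjoint. Assume for contradiction that $A_y(\alpha)\neq\emptyset$ for every $y$. The engine of the proof is the following observation. If $f(x,y_0)\ge\frac12(f(x,y_1)+f(x,y_2))$, then $A_{y_0}(\alpha)\subset A_1\cup A_2$: a point outside both sets has both values $>\alpha$, hence the average is $>\alpha$. Since $A_{y_0}(\alpha)$ is connected, nonempty, and covered by two disjoint closed sets, it lies entirely in one of them.

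Iterating (i), I would build for every dyadic $t\in[0,1]$ a point $y_t$ with $f(\cdot,y_t)\ge(1-t)f(\cdot,y_1)+tf(\cdot,y_2)$. The same argument places each $A_{y_t}(\alpha)$ inside $A_1$ or inside $A_2$, splitting the dyadics into classes $T_1\ni 0$ and $T_2\ni 1$. The hard part is deriving a contradiction from this split. I would try a quantitative gap. By lower semicontinuity on the compact sets $A_1$ and $A_2$, there is $\delta>0$ with $f(\cdot,y_2)>\alpha+\delta$ on $A_1$ and $f(\cdot,y_1)>\alpha+\delta$ on $A_2$. Both functions are also bounded below by some $-m$ on $X$. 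Now take adjacent dyadics $t\in T_1$ and $s=t+2^{-k}\in T_2$ with $2^{-k}\ll\delta/(\alpha+m+\delta)$, and let $y^\ast$ be a (i)-midpoint of $y_t$ and $y_s$. Then $A_{y^\ast}(\alpha)$ is contained in $A_{y_t}(\alpha)\cup A_{y_s}(\alpha)$. The gap estimates should show that on $A_1$ the combination $(1-u)f_1+uf_2$ stays $>\alpha$ for the relevant weights $u$, and symmetrically on $A_2$. That would force $A_{y^\ast}(\alpha)=\emptyset$, contradicting the standing assumption.

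If this direct estimate is awkward, I would first replace $f$ by a bounded truncation such as $\arctan f$. This preserves both (i), being a midpoint-type inequality under a monotone map, and (ii), since sublevel sets are unchanged, and it makes the constants uniform.
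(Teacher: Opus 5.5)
The paper quotes this theorem from Terkelsen without proof, and it actually relies only on Sion's theorem, so your argument has to be judged on its own. Most of it is sound:
\begin{itemize}
\item Condition (ii) gives lower semicontinuity, so both minima are attained.
\item The finite-intersection reduction is correct.
\item The induction on $X'=A_{y_n}(\alpha)$ works (the case $X'=\emptyset$ is trivial).
\item In the two-set case, any nonempty $A_y(\alpha)$ with $f(\cdot,y)\ge(1-t)f_1+tf_2$, where $f_i:=f(\cdot,y_i)$, is connected and lies in $A_1\cup A_2$, hence in exactly one of them.
\end{itemize}
No level bookkeeping is needed. Since $A_1\cap A_2=\emptyset$, lower semicontinuity and compactness already give $\gamma:=\min_x\max(f_1,f_2)>\alpha$.

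\textbf{The gap is the step that is supposed to give the contradiction.} For $f(\cdot,y^\ast)$ you only use the lower bound $(1-u)f_1+uf_2$ with $u=t+2^{-k-1}$, together with $\delta$ and $m$. From these data, this combination is guaranteed to exceed $\alpha$ on $A_1$ only when $u>(\alpha+m)/(\alpha+m+\delta)$, and on $A_2$ only when $u<\delta/(\alpha+m+\delta)$. These two ranges are disjoint unless $\delta>\alpha+m$. You have no control over where the switch from $T_1$ to $T_2$ happens, and the smallness of $2^{-k}$ never enters the estimate. Because $f(\cdot,y_t)$ is only bounded below by the convex combination, nothing ties $f(\cdot,y_t)$ to $f(\cdot,y_s)$ for nearby $t,s$. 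One more bisection simply puts $A_{y^\ast}(\alpha)$ into $T_1$ or $T_2$ again, which is no contradiction.

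Your fallback is also wrong. $\arctan$ is increasing but convex on $(-\infty,0]$, so $\arctan\circ f$ can violate (i); take values $-10$ and $0$. Only concave nondecreasing maps, such as $\min(f,c)$, preserve (i). In any case no truncation is needed, since $f(\cdot,y_t)\ge(1-t)f_1+tf_2\ge-m$ holds uniformly in $t$.

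\textbf{What is missing is a strict gap that you can push back through the midpoint relations.} Here is a way to complete the two-set case.
\begin{itemize}
\item Pick $\beta\in(\alpha,\gamma)$ and set $B_i=\{f_i\le\beta\}$. These are closed and disjoint.
\item Each $D_t=\{f(\cdot,y_t)\le\beta\}$ is connected by (ii), contains $A_{y_t}(\alpha)\neq\emptyset$, and lies in $B_1\cup B_2$. Hence $t\in T_1$ if and only if $D_t\subset B_1$.
\item Run the bisection starting from $(0,1)$. At each step, $y_u$ is an (i)-midpoint of the current pair $t\in T_1$, $s\in T_2$.
\item Suppose $u\in T_1$ and $x\in A_{y_u}(\alpha)$. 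Then $x\in B_1$, so $f(x,y_s)>\beta$, and therefore $f(x,y_t)<2\alpha-\beta$.
\item If $t\neq 0$, then $t$ was itself the midpoint of an earlier pair $(t',s')$ with $s'\in T_2$. Since $f(x,y_{s'})>\beta$ as well, you get $f(x,y_{t'})<2(2\alpha-\beta)-\beta$.
\item Continue back to $0$. After $n$ steps, $f(x,\cdot)<\beta-2^n(\beta-\alpha)$.
\end{itemize}
All these functions are $\ge -m$. So fewer than $\log_2\bigl((\beta+m)/(\beta-\alpha)\bigr)$ bisection midpoints can fall in $T_1$, and symmetrically in $T_2$. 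This contradicts the fact that the bisection never stops. Without the second level $\beta$ and this backward iteration through the midpoint tree, your two-set case remains unproved.
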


\begin{Theorem}[Corollary 3.3 of Theorem 3.4 in \citet{sion1958general}]
\label{sion}
Let $X$ and $Y$ be convex subsets of topological vector spaces, with $X$ compact, 
and let $f : X \times Y \to \mathbb{R}$ satisfy:
\begin{itemize}
    \item[(i)] For each $x \in X$ the function $y \mapsto f(x,y)$ is upper semi-continuous and quasi-concave on $Y$;
    \item[(ii)] For each $y \in Y$ the function $x \mapsto f(x,y)$ is lower semi-continuous and quasi-convex on $X$.
\end{itemize}
Then
\[
    \sup_{y} \min_{x} f = \min_{x} \sup_{y} f.
\]
\end{Theorem}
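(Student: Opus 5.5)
The plan is the elementary proof of Sion's theorem due to Komiya. It reduces the statement to finitely many $y$'s via compactness, then proves a two-point lemma by a connectedness argument, and finishes by induction.

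\emph{The easy inequality and the setup.} The inequality $\sup_y \min_x f \le \min_x \sup_y f$ always holds. For each fixed $x_0$ and $y$ we have $\min_x f(x,y)\le f(x_0,y)\le \sup_{y'} f(x_0,y')$. The minimum over $x$ exists because each $f(\cdot,y)$ is lower semicontinuous on the compact $X$, and $\min_x \sup_y f$ exists because a supremum of lsc functions is lsc. For the reverse inequality, suppose by contradiction that some real $\alpha$ satisfies
\[
\sup_y \min_x f(x,y) < \alpha < \min_x \sup_y f(x,y).
\]

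\emph{Reduction to finitely many $y$.} The sets $L_y:=\{x\in X: f(x,y)\le \alpha\}$ are closed by lower semicontinuity in $x$. Their intersection over all $y\in Y$ is empty, since $\min_x\sup_y f>\alpha$. By compactness of $X$, finitely many of them already have empty intersection: there are $y_1,\dots,y_n$ with $\min_x \max_i f(x,y_i)>\alpha$. So it suffices to prove the following finite claim: if $\min_x\max_{i\le n} f(x,y_i)>\alpha$, then some $y_0\in Y$ has $\min_x f(x,y_0)>\alpha$. This contradicts $\sup_y\min_x f<\alpha$.

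\emph{Key two-point lemma, which I expect to be the main obstacle.} The claim for $n=2$ is: if $\alpha<\min_x\max(f(x,y_1),f(x,y_2))$, then some $z$ on the segment $[y_1,y_2]$ has $\alpha<\min_x f(x,z)$.
\begin{enumerate}
\item Suppose not. Choose $\beta$ with $\alpha<\beta<\min_x\max(f(x,y_1),f(x,y_2))$.
\item Set $A=\{x: f(x,y_1)\le\beta\}$ and $B=\{x:f(x,y_2)\le\beta\}$. These are closed and convex, by lsc and quasi-convexity in $x$, and they are disjoint by the choice of $\beta$.
\item For each $z\in[y_1,y_2]$, the set $C_z=\{x:f(x,z)\le\alpha\}$ is nonempty (by the contrary assumption) and convex, hence connected. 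Quasi-concavity in $y$ gives $f(x,z)\ge\min(f(x,y_1),f(x,y_2))$, so $C_z\subset A\cup B$. By connectedness, $C_z$ lies entirely in $A$ or entirely in $B$.
\item Partition the segment into $I=\{z: C_z\subset A\}$ and $J=\{z:C_z\subset B\}$. Both are nonempty, since $y_1\in I$ and $y_2\in J$.
\item The delicate step is showing $I$ is closed, and symmetrically $J$. Take $z_k\to z$ with $z_k\in I$, and pick any $x\in C_z$, so $f(x,z)\le\alpha<\beta$. By upper semicontinuity in $y$, $f(x,z_k)<\beta$ for large $k$, so $x$ lies in the $\beta$-level set of $z_k$. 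That level set is also connected and contained in $A\cup B$, and it contains $C_{z_k}$, which is nonempty and lies in $A$. Hence the whole $\beta$-level set of $z_k$ lies in $A$, so $x\in A$. Therefore $C_z\subset A$ and $z\in I$.
\item The segment is now split into two disjoint nonempty closed sets, contradicting its connectedness.
\end{enumerate}
The gap between $\alpha$ and $\beta$ is what makes the two semicontinuities combine correctly, and this is the step I would check most carefully.

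\emph{Induction on $n$.} Assume the finite claim holds for $n-1$ points, and suppose $\min_x\max_{i\le n} f(x,y_i)>\alpha$. If $\min_x f(x,y_n)>\alpha$ we are done. Otherwise, restrict to the compact convex set $X'=\{x: f(x,y_n)\le\alpha\}$, which is nonempty in this case. On $X'$ we have $\min_{x\in X'}\max_{i<n}f(x,y_i)>\alpha$, so the induction hypothesis applied on $X'$ gives $y'$ with $\min_{x\in X'}f(x,y')>\alpha$. It follows that $\min_{x\in X}\max(f(x,y'),f(x,y_n))>\alpha$: on $X'$ the first term exceeds $\alpha$, and off $X'$ the second does. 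The two-point lemma applied to $y'$ and $y_n$ now produces the required $y_0$. This completes the contradiction and the proof.
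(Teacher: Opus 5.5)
Your proof is correct. It is Komiya's (1988) elementary proof, and every step checks out. That includes the delicate closedness step, where the gap $\alpha<\beta$ is exactly what lets upper semicontinuity in $y$ push a point of $C_z$ into the $\beta$-level set of some $z_k$, a set that already lies in $A$.

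The paper gives no proof of its own. It quotes the result from \citet{sion1958general} and uses it as a black box for the min--sup interchanges in Sections~2 and~3. As far as I recall, Sion's original argument handles the finite configuration with the Knaster--Kuratowski--Mazurkiewicz covering lemma on a simplex, which is a Brouwer-type fixed-point ingredient. You need only compactness, connectedness of convex sets and of a segment, and the two semicontinuities.

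Your route buys self-containment and shows exactly where each hypothesis enters:
\begin{itemize}
\item quasi-concavity in $y$ only to get $C_z\subset A\cup B$;
\item upper semicontinuity in $y$ only for closedness of your sets $I$ and $J$;
\item quasi-convexity and lower semicontinuity in $x$ for connectedness, closedness and attainment of minima.
\end{itemize}
The KKM route is shorter once KKM is granted, and it is the one that generalizes to Ky Fan--type minimax inequalities.

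Two cosmetic tightenings are worth making.
\begin{itemize}
\item Run step~5 on the parameter $t\in[0,1]$ of $z_t=(1-t)y_1+ty_2$ rather than with sequences in the segment. The map $t\mapsto f(x,z_t)$ is upper semicontinuous on $[0,1]$, and this avoids relying on the segment being homeomorphic to $[0,1]$ in a possibly non-metrizable space.
\item State the finite claim for all compact convex $X$ simultaneously, with the trivial base case $n=1$. This matters because the induction hypothesis is invoked on $X'$ rather than on $X$.
\end{itemize}
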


\bibliographystyle{plainnat}
\bibliography{ref}

\end{document}